\newcommand{\id}{\operatorname{id}\nolimits}
\newcommand{\A}{\operatorname{\mathcal A}\nolimits}
\newcommand{\D}{\operatorname{\mathcal D}\nolimits}
\newcommand{\E}{\operatorname{\mathcal E}\nolimits}
\newcommand{\X}{\operatorname{\mathcal X}\nolimits}
\newcommand{\Y}{\operatorname{\mathcal Y}\nolimits}
\newcommand{\Z}{\operatorname{\mathcal Z}\nolimits}
\newcommand{\ord}{\operatorname{ord}\nolimits}
\renewcommand{\L}{\Lambda}
\newcommand{\G}{\Gamma}
\newcommand{\Gb}{\overline{\Gamma}}
\newcommand{\Q}{{\mathcal Q}}
\newcommand{\Qg}{{\mathcal Q}_{\Gamma}}
\newcommand{\Qgb}{{\mathcal Q}_{\overline{\Gamma}}}
\newcommand{\barQg}{\overline{{\mathcal Q}_{\Gamma}}}
\newcommand{\Ig}{I_{\Gamma}}
\newcommand{\barIg}{\overline{I_{\Gamma}}}
\newcommand{\Ag}{\A_{\Gamma}}
\newcommand{\oQ}{\overline\Q}
\newcommand{\oL}{\overline \Lambda}
\newcommand{\mo}{{\mathfrak o}}
\newcommand{\om}{\overline{m}}
\newcommand{\omo}{\overline{\mathfrak o}}
\newcommand{\oq}{\overline{q}}
\newcommand{\Mod}{\operatorname{Mod}\nolimits}
\newcommand{\val}{\operatorname{val}\nolimits}
\newtheorem{lemma}{Lemma}[section]
\newtheorem{proposition}[lemma]{Proposition}
\newtheorem{corollary}[lemma]{Corollary}
\newtheorem{theorem}[lemma]{Theorem}
\newtheorem{definition}[lemma]{Definition}
\newenvironment{dfn}{\begin{definition} \rm}{\end{definition}}
\newtheorem{example}[lemma]{Example}
\newenvironment{ex}{\begin{example} \rm}{\end{example}}
\begin{document}

\topmargin 0cm
\oddsidemargin 0.5cm
\evensidemargin 0.5cm
\baselineskip=15pt

\title[Group actions and coverings of Brauer graph algebras]{Group actions and coverings of Brauer graph algebras}

\author[Green]{Edward L. Green}
\address{Edward L. Green\\
Department of Mathematics \\
Virginia Tech\\
Blacksburg, VA  24061-0123\\
USA}
\email{green@math.vt.edu}

\author[Schroll]{Sibylle Schroll}
\address{Sibylle Schroll\\
Department of Mathematics \\
University of Leicester \\
University Road \\
Leicester LE1 7RH \\
United Kingdom}
\email{ss489@le.ac.uk}

\author[Snashall]{Nicole Snashall}
\address{Nicole Snashall\\
Department of Mathematics \\
University of Leicester \\
University Road \\
Leicester LE1 7RH \\
United Kingdom}
\email{njs5@mcs.le.ac.uk}

\subjclass[2010]{Primary 05E18, 16G20; Secondary 14E20, 16W50, 58E40}
\keywords{Group actions, Brauer graph algebras, coverings of algebras, coverings of graphs, group gradings}

\begin{abstract}
We develop a theory of group actions and coverings on Brauer graphs that parallels the theory of group actions and
coverings  of algebras.  In particular, we show that any Brauer graph can be
covered by a tower of coverings of  Brauer graphs such that the topmost covering has multiplicity function identically
one, no loops, and no multiple edges. Furthermore, we classify the coverings
of Brauer graph algebras that are again Brauer graph algebras.
\end{abstract}

\thanks{This work was supported by the Leverhulme Trust through an
 Early Career Fellowship for the second author.}

\maketitle

\section*{Introduction}

In this paper we introduce the theory of group actions and coverings on Brauer
graphs that parallels the theory of group actions and
coverings  of algebras.  In particular,  we show that any Brauer graph can be
covered by a tower of coverings, the topmost of which is a Brauer graph with no exceptional vertices,
 no loops, and no multiple edges (Theorem~\ref{thm:final}). This allows many homological questions
 related to Brauer graph algebras to be simplified by considering
 Brauer graphs with no exceptional vertices, no loops and no multiple edges.
Specifically, we know from covering theory that, for a finite group $G$, the category of $G$-graded modules over a Brauer graph algebra is equivalent to the module category of the covering algebra associated to the group $G$. In a subsequent paper (\cite{GSST}), we use this fact and Theorem~\ref{thm:final} to compute a minimal set of generators for the Ext algebra of any Brauer graph algebra.
In this current paper we also classify the coverings
of Brauer graph algebras that are again Brauer graph algebras
(Theorem~\ref{thm:covering}). One reason this result is of interest, is
that it is difficult, in general, to determine if a covering comes from
a Brauer graph simply from knowledge of the quiver and relations.

Since their introduction by Riedtmann in \cite{Riedtmann80}, coverings of algebras have been extensively studied
and they have rapidly proven to be of interest in the representation theory of algebras (see, for example,
\cite{BongartzGabriel81, GordonGreen82, Green83, GHS08, SY05} and references within). In the case of self-injective algebras of finite or tame type - including Brauer graph algebras - this is also well demonstrated in the survey article \cite{Skowronski06}. The results in \cite{Green83} and \cite{GHS08},
relating the coverings of algebras given by quivers with relations on one hand, and the group-gradings of algebras on the other hand, lead to an equivalence of categories; namely, given a $G$-grading of a quiver with relations, then the category of finite-dimensional modules of the covering of the algebra is equivalent to the category of finite-dimensional $G$-graded modules.

Brauer graphs are a generalization of Brauer
trees. A Brauer tree is a finite tree, with a cyclic ordering of the
 edges at each vertex
and a multiplicity assigned to exactly one of its vertices, which is called the exceptional vertex.  These trees
were defined by Brauer in \cite{Brauer41} in the study of block
algebras of finite groups with cyclic defect groups. The entire
structure of a block with cyclic defect groups can be read directly
from its
Brauer tree. For example,  in \cite{Janusz69}, Janusz gives a
description of the non-projective, non-simple indecomposable modules
of a block with cyclic defect groups in terms of paths in its Brauer
tree, and in \cite{Green74}, J.A. Green defined ``walking around the
Brauer tree", giving projective resolutions of certain modules of
a block with cyclic defect groups.

It is well known (see for example \cite{Alperin86} or
\cite{Benson98}) that Brauer graph algebras are tame special
biserial self-injective algebras and that those of finite
representation type are the Brauer tree algebras. Furthermore, the
derived equivalence classes of Brauer graph algebras have been
extensively studied, beginning with Brauer tree algebras
(for example, \cite{Rickard89}, and, for stable equivalence classes, see \cite{Reiten77}), generalized Brauer tree
algebras (see \cite{Membrillo-Hernandez97}) and finally Brauer graph
algebras (see \cite{Kauer96, Kauer98, KauerRoggenkamp01, Roggenkamp96}
and \cite{Skowronski06} and the references within).

We briefly summarize the results of the paper.  In Section~\ref{section:notation}, we
begin by recalling the definitions of a Brauer graph and a Brauer graph algebra
together with some essential notation.  In Section~\ref{section:iso}, we define a free Brauer action of a
finite abelian group on a Brauer graph and construct the Brauer orbit graph.  A free Brauer action of a finite abelian group $G$
on a Brauer graph $\G$ induces a free action of $G$ on the quiver of the associated Brauer graph algebra and the section culminates with Theorem~\ref{thm:firstiso}, which relates the Brauer graph algebras
of $\G$ and its orbit graph $\overline{\G}$.

Section~\ref{section:weight} introduces a Brauer weighting on a Brauer graph which is analogous to a weight function on a quiver.  Given a Brauer weighting $W$ to a finite abelian group $G$ on a Brauer graph $\Delta$, we define the Brauer
covering graph $\Delta_W$ and show there is a canonical action of $G$ on
$\Delta_W$.  Theorem~\ref{thm:inducedfreeBraueraction} proves that this action is a free Brauer action and that $\Delta\cong\overline{\Delta_W}$ as Brauer graphs. We apply the results of Section~\ref{section:weight} in Section~\ref{section:q} to Brauer graph algebras.  If $\A_{\Delta}$ (respectively, $\A_{\Delta_{W}}$, $\A_{\overline{\Delta_W}}$) is the Brauer graph algebra of  $\Delta$ (resp., $\Delta_{W}$, $\overline{\Delta_W}$), then Theorem~\ref{thm:algiso} shows that there are $K$-algebra isomorphisms
\[
\A_{\Delta}\cong \A_{\overline{\Delta_W}}\cong\overline{\A_{\Delta_{W}}},
\]
where $\overline{\A_{\Delta_{W}}}$ denotes the orbit algebra of   $\A_{\Delta_{W}}$.

The main result of Section~\ref{section:quot-to-coverings} is Theorem~\ref{thm:orbit2covering}, which shows that if $G$ is a finite
abelian group with free Brauer action on  $\Gamma$, then there is a
Brauer weighting $W$ on $\overline \G$ such that the Brauer graphs $\G$ and $\overline{\G}_W$ are
isomorphic, as are their corresponding Brauer graph algebras.

Applications of the theory presented are given in Section~\ref{section:appl}. The Appendix, Section~\ref{section:app}, provides
a brief survey of covering theory for path algebras and their quotients.

\section{Notation}\label{section:notation}

Let $\Gamma$ be a finite connected graph with at least one edge.  We denote
by $\Gamma_0$ the set of vertices of $\Gamma$ and by $\Gamma_1$ the
set of edges of $\Gamma$. We equip $\Gamma$ with a {\em multiplicity
function} $m \colon \Gamma_0 \rightarrow \mathbb{N} \setminus \{0\}$
and, for each vertex in $\Gamma$, we fix a cyclic ordering of the edges incident with this vertex.
We call such a graph a {\em Brauer graph}. Thus a Brauer graph is a triple $(\Gamma, \mathfrak{o}, m)$
where $\mathfrak{o}$ denotes the cyclic ordering and $m$ the multiplicity function. Note that in the Brauer tree case, the multiplicity function has value $1$ at all but possibly one vertex, called the exceptional vertex.
Although the convention in the literature is to denote a Brauer graph by $\Gamma$, where
the choice of cyclic ordering and multiplicity function are suppressed, we usually write $(\G,\mo,m)$.
In all examples a planar embedding of $\Gamma$ is given and we
choose the cyclic ordering to be the clockwise ordering of the edges
around each vertex.

We say that an edge $j$ in $\G$ is the {\em successor} of the edge
$i$ at the vertex $\mu$ if both $i$ and $j$ are incident with $\mu$ and
edge $j$ directly follows edge $i$ in the cyclic ordering around $\mu$.
For each $\mu \in \Gamma_0$, let $\val(\mu)$
denote the {\em valency} of $\mu$, that is, the number of edges
incident with $\mu$ where we count each loop as two edges.
If $\val(\mu) = 1$ with edge $i$ incident with the vertex $\mu$ then we say that
$i$ is a successor of itself. If $\mu$ is a vertex with $\val(\mu) = 1$ and $m(\mu) = 1$
so that $i$ is the only edge incident with $\mu$ then we call $i$ a {\em truncated edge}
at the vertex $\mu$.

Following \cite{Benson98} and \cite{Kauer98}, we let $K$ be a field and
introduce the Brauer graph algebra of a Brauer graph $\Gamma$.
We associate to $\Gamma$ a quiver $\Qg$ and a
set of relations $\rho_\Gamma$ in the path algebra $K\Qg$, which we call
the {\em Brauer graph relations}.
Let $\Ig$ be the ideal of $K\Qg$ which is generated by the set $\rho_\Gamma$.
We define the {\em Brauer graph algebra} $\Ag$ of $\Gamma$ to be the quotient $\Ag = K\Qg/\Ig$.

If the Brauer graph $\G$ is
$\xymatrix{
\mu \ar@{-}[r] & \nu
}$
with $m(\mu) = m(\nu) = 1$ then
$\Qg$ is
$\xymatrix{
\bullet \ar@(dr,ur)[]_x
}$
and $\rho_\Gamma = \{x^2\}$ so
the Brauer graph algebra is $K[x]/(x^2)$.

We now define $\Qg$ for a general Brauer graph (excluding the above case, so if edge $i$ is truncated at vertex $\mu$ and the endpoints of $i$ are $\mu$ and $\nu$ then $m(\nu)\val(\nu) \geq 2$).
The vertices of $\Qg$ correspond to the edges of $\Gamma$, that is, for every edge $i \in
\Gamma_1$ there is a corresponding vertex $v_i$ in $\Qg$.
If edge $j$ is the successor
of edge $i$ at the vertex $\mu$ and edge $i$ is not a truncated edge at $\mu$ then there is an arrow from $v_i$ to $v_j$ in $\Qg$.
For each vertex $\mu$ and edge $i$
incident with $\mu$,  let $i = i_1, i_2, \ldots , i_{\val(\mu)}$ be
the edges incident with $\mu$ listed in the cyclic ordering, where
the loops are listed twice and the other edges precisely once.  We
call this the {\em successor sequence of $i$ at $\mu$}.  We set
$i_{\val(\mu)+1}=i$, noting that $i$ is the successor of
$i_{\val(\mu)}$.

In case $\Gamma$ has at least one loop, care must be taken.  In such
circumstances, for each vertex $\mu$, we choose a distinguished edge,
$i_{\mu}$, incident with $\mu$.  If $\ell$ is a loop at $\mu$,
$\ell$ occurs twice in the successor sequence of $i_{\mu}$.  We
distinguish the first and second occurrences of $\ell$ in this
sequence and view the two occurrences as two edges in $\Gamma_1$.
Thus, $\Gamma_1$ is the set of all edges with the proviso that loops
are listed twice and have different successors.
In particular, if $\Gamma$ is
$\xymatrix@1{
\mu \ar@{-}@(dr,ur)[]_{i}
}$
then, since $i$ is viewed as two edges, say $i$ and $\hat i$, the successor
sequence of $i$ at vertex $\mu$ is $i,\hat i$ and the successor sequence of $\hat i$ at $\mu$ is $\hat i,i$.  These sequences imply that there are two arrows in the quiver
$\Qg$,
and $\Qg  =
\hskip .6cm
\xymatrix@1{
\bullet \ar@(dl,ul)[]\ar@(dr,ur)[]_{\phantom{i}}
}.$

In order to define the Brauer graph relations $\rho_\Gamma$ we need
a {\em quantizing function} $q$. Let $\X_\Gamma = \{(i, \mu) \mid i
\in \Gamma_1 \mbox{ is incident with $\mu \in \Gamma_0$ and $i$ is
not truncated at either of its endpoints}\}$ and let $q\colon
\X_\Gamma \to K\setminus\{0\}$ be a set function. We denote $q((i,
\mu))$ by $q_{i, \mu}$. With this additional data we call $(\Gamma,
\mo, m, q)$ a {\em quantized Brauer graph}.
We remark that if the Brauer graph $\G$ is $\xymatrix{ \mu \ar@{-}[r] & \nu }$
then $\X_{\G} = \emptyset$. Furthermore, if the Brauer graph algebra is assumed to
be symmetric and if the field is algebraically closed, then we can set $q=1$.

 There are three types of relations for $(\Gamma, \mo, m, q)$. Note that
write our paths from left to right.

\textit{Relations of type one.} For each vertex $\mu$ and edge
$i$ incident with $\mu$, which is not truncated at the vertex $\mu$,
let $i = i_1, i_2, \ldots , i_{\val(\mu)}$ be the successor sequence
of $i$ at $\mu$.  From this we obtain a cycle $C_{i, \mu} = a_1a_2
\cdots a_{\val(\mu)}$ in $\Qg$ where the arrow $a_r$ corresponds to
the edge $i_{r+1}$ being the successor of the edge $i_r$ at the
vertex $\mu$. With this notation, for each edge $i \in \Gamma$ with
endpoints $\mu$ and $\nu$ so that $i$ is not truncated at either
$\mu$ or $\nu$, $\rho_\Gamma$ contains either $q_{i, \mu} C_{i,
\mu}^{m(\mu)} - q_{i, \nu} C_{i, \nu}^{m(\nu)}$ or $q_{i, \nu} C_{i,
\nu}^{m(\nu)} - q_{i, \mu} C_{i, \mu}^{m(\mu)}$. We call this a type
one relation. Note that since one of these relations is the negative
of the other, the ideal $\Ig$ does not depend on this choice.

\textit{Relations of type two.} The
second type of relations occurs if $i$ is a truncated edge at the
vertex $\mu$ and the endpoints of $i$ are $\mu$ and $\nu$. Let
$C_{i, \nu} = b_1b_2 \cdots b_{\val(\nu)}$ be the cycle associated
to edge $i$ incident with vertex $\nu$. In this case we have a
relation $C_{i, \nu}^{m(\nu)}b_1$.

\textit{Relations of type three.} The third type of relations are
quadratic monomial relations of the form $ab$ in $K\Qg$ where $ab$
is not a subpath of any $C_{i, \mu}$.

\begin{ex}
\begin{enumerate}
\item The graph $(\G,\mo,m)$
$$\xymatrix{
\mu\ar@{-}[dr]^1 & & \\
& \lambda\ar@{-}[r]^2 & \nu\\
\xi\ar@{-}[ur]_3 & &
}$$
with $m(\lambda) = m(\mu) = m(\nu) = 1$ and $m(\xi) = 2$
has edge 1 truncated at vertex $\mu$ and edge 2 truncated at vertex $\nu$.
Then $\X_{\G} = \{(3, \lambda), (3, \xi)\}$. Let $q\colon\X_{\G} \to K\setminus\{0\}$ be the quantizing function.
The Brauer graph algebra associated to the quantized Brauer graph $(\Gamma, \mo, m, q)$ has quiver
$$\xymatrix{
v_1\ar[rr]^\alpha & & v_2\ar[dl]^\beta\\
& v_3\ar[ul]^\gamma \ar@(dr,dl)[]^\delta &
}$$
and $\rho_\Gamma = \{q_{3,\xi}\delta^2 - q_{3,\lambda}\gamma\alpha\beta, \alpha\beta\gamma\alpha, \beta\gamma\alpha\beta, \beta\delta, \delta\gamma\}$. \\
\item The graph $(\G,\mo,m)$
$$\xymatrix{
& \mu\ar@{-}[dr]^1\ar@{-}[dl]_3 & \\
\xi\ar@{-}[rr]_2 & & \nu
}$$
with $m(\mu) = 3$ and $m(\nu) = m(\xi) = 1$
has $\X_{\G} = \{(1, \mu), (1, \nu), (2, \xi), (2, \nu), (3, \xi), (3, \mu)\}$.
Let $q\colon \X_{\G} \to K\setminus\{0\}$. The Brauer graph algebra of $(\Gamma, \mo, m, q)$ has quiver
$$\xymatrix{
v_1\ar@<1ex>[rr]^{a_1}\ar[dr]^{\bar{a}_3} & & v_2\ar@<1ex>[dl]^{a_2}\ar[ll]^{\bar{a}_1}\\
& v_3\ar@<1ex>[ul]^{a_3}\ar[ur]^{\bar{a}_2} &
}$$
and $\rho_\Gamma = \{q_{1,\mu}(\bar{a}_3a_3)^3 - q_{1,\nu}a_1\bar{a}_1,
q_{2,\xi}a_2\bar{a}_2 - q_{2,\nu}\bar{a}_1 a_1,
q_{3,\xi}\bar{a}_2a_2 - q_{3,\mu}(a_3\bar{a}_3)^3,$\\
\hspace*{3cm}$a_1a_2, a_2a_3, a_3a_1, \bar{a}_1\bar{a}_3, \bar{a}_3\bar{a}_2, \bar{a}_2\bar{a}_1\}$.
\end{enumerate}
\end{ex}

\section{Group actions on Brauer graphs and Brauer graph algebras}\label{section:iso}

Suppose $G$ is a finite abelian group which acts on a finite connected graph $\Sigma$. Let
$\overline{\Sigma} = \Sigma/G$ be the orbit graph of $\Sigma$ under $G$, and let $\bar{\mu} = \{\mu^g \mid g \in G\}$
and $\bar{i} = \{i^g \mid g \in G\}$ where $\mu \in \Sigma_0$ and $i \in \Sigma_1$.
If the action of $G$ on $\Sigma$ is not faithful, then let $N$ be the normal subgroup of $G$
consisting of those elements that fix every vertex and every edge of $\Sigma$. Then $G/N$ acts faithfully on $\Sigma$. Without loss
of generality, we assume throughout this paper that our group actions are faithful.

Since a Brauer graph comes equipped with a cyclic ordering and multiplicity function
we now define a Brauer action of a finite abelian group on a Brauer graph as follows.

\begin{dfn}\label{dfn:Br_action}
Let $G$ be a finite abelian group and let $(\Gamma, \mathfrak{o}, m)$ be a Brauer graph.
We say that there is a {\em Brauer action} of $G$ on $(\Gamma, \mathfrak{o}, m)$ if
\begin{enumerate}
\item $G$ acts (faithfully) on the graph $\Gamma$,
\item the action of $G$ on $\G$ is orientation preserving, that is,
if $j$ is the successor of $i$ at the vertex $\mu$,
then, for all $g \in G$, $j^g$ is the successor of $i^g$ at the vertex $\mu^g$, and
\item $m(\mu) = m(\mu^g)$ for all $g \in G, \mu \in \G_0$.
\end{enumerate}
\end{dfn}

If there is a Brauer action of a finite abelian group $G$ on a
Brauer graph $(\Gamma, \mo, m)$ then $\val(\mu) = \val(\mu^g)$ for all $g \in G$ and $\mu \in \Gamma_0$, since successors are preserved by the $G$-action. In particular, by property (3), $\val(\mu)m(\mu) = \val(\mu^g)m(\mu^g)$ for all $g \in G$ and $\mu \in \Gamma_0$.

We now define a free Brauer action.

\begin{dfn}\label{dfn:free_Br_action}
Let $G$ be a finite abelian group and let $(\Gamma, \mathfrak{o}, m)$ be a Brauer graph.
A Brauer action of $G$ on $(\Gamma, \mathfrak{o}, m)$ is a {\em free Brauer action} if
$G$ acts freely on the edge set $\Gamma_1$ of $\G$, that is, if $i \in \G_1$
and $i^g = i$ then $g = \id_G$, the identity in $G$.
\end{dfn}

We remark that if there is a Brauer action of $G$ on $(\Gamma, \mo, m)$ and if there exists $g \in G$ such that $i^g = i$ for some $i \in \Gamma_1$ then, since the action of $G$ is orientation preserving and $\Gamma$ is connected, it follows that $j^g = j$ for all $j \in \Gamma_1$. Moreover, if there is a Brauer action of a non-trivial finite abelian group
$G$ on $(\G,\mo,m)$ and if  $|\Gamma_0| \geq 3$, then there is a
free Brauer action of $G$
on $(\Gamma, \mathfrak{o}, m)$ if and only if whenever $i \in \Gamma_1$ then there is some $g \in G$ with $i^g \neq i$. We emphasize that the group $G$ need not act freely on the set of vertices of $\Gamma$ for the Brauer action of $G$ on $(\Gamma, \mo, m)$ to be a free Brauer action.

The next results show that if there is a free Brauer action of $G$ on the Brauer graph $(\Gamma, \mo, m)$, then there is a multiplicity function $\om$ on $\Gb$ which is compatible with $m$ and an induced cyclic ordering $\omo$
which makes $(\Gb, \omo, \om)$ a Brauer graph.

\begin{lemma}\label{lemma:val}
Suppose there is a free Brauer action of $G$ on $(\Gamma, \mo, m)$. Then $\val(\bar{\mu}) \mid \val(\mu)$ for all $\mu \in \G_0$.
Moreover, if $i\in \Gamma_1$ is incident with $\mu$, then there exist $1 \leq k \leq \val(\mu), s \geq 0$ and $g \in G$ such that
\begin{enumerate}
\item $\val(\bar{\mu}) = k$;
\item the successor sequence of $i$ at $\mu$ is
$$i = i_1, i_2 , \ldots , i_k, i_1^g, i_2^g, \ldots , i_k^g, \ldots , i_1^{g^s}, i_2^{g^s}, \ldots , i_k^{g^s} = i_{\val(\mu)};$$
\item $\val(\mu) = \val(\bar{\mu})(s+1)$.
\end{enumerate}
\end{lemma}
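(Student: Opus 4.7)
The plan is to analyze the action of the stabilizer subgroup $G_{\mu} = \{g \in G : \mu^g = \mu\}$ on the cyclically ordered list of edges incident with $\mu$, where (by the paper's convention) each loop at $\mu$ is counted as two entries. Since the Brauer action is orientation preserving, every element of $G_{\mu}$ preserves the successor relation at $\mu$; equivalently, the permutation of $\{i_1, i_2, \ldots, i_{\val(\mu)}\}$ induced by $g \in G_{\mu}$ commutes with the cyclic shift $\sigma : i_r \mapsto i_{r+1}$, so it is a power of $\sigma$.

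First I would prove that $G_{\mu}$ is cyclic and that $|G_{\mu}|$ divides $\val(\mu)$. Since the Brauer action is free on $\Gamma_1$, no nontrivial element of $G_{\mu}$ fixes any edge incident with $\mu$, so the homomorphism $a \colon G_{\mu} \to \Zbb/\val(\mu)\Zbb$ recording the rotation amount is injective. Its image is therefore the unique subgroup of order $|G_{\mu}|$ in $\Zbb/\val(\mu)\Zbb$, which is generated by $\val(\mu)/|G_{\mu}|$; in particular, $|G_{\mu}|$ divides $\val(\mu)$ and $G_{\mu}$ is cyclic.

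Next I would establish (1) and (3) by a flag-counting argument. The set of flags (i.e.\ entries in the successor sequences) at the vertices in the orbit $G \cdot \mu$ has cardinality $|G\cdot \mu|\,\val(\mu) = (|G|/|G_{\mu}|)\,\val(\mu)$, using the fact (from Definition~\ref{dfn:Br_action}(3) and the remark after it) that all vertices in $G \cdot \mu$ have valency $\val(\mu)$. Because the action on $\Gamma_1$ is free, every $G$-orbit of flags has size exactly $|G|$, so
\[
\val(\bar{\mu}) \;=\; \frac{|G\cdot\mu|\,\val(\mu)}{|G|} \;=\; \frac{\val(\mu)}{|G_{\mu}|}.
\]
Setting $k = \val(\bar{\mu})$ and $s+1 = |G_{\mu}|$ yields $\val(\mu) = k(s+1)$, which gives both (1) and (3).

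For (2), I would pick $g \in G_{\mu}$ with $a(g) = k$; by the first step such a $g$ exists and generates $G_{\mu}$. Then $i_r^{g^t} = i_{r+tk}$ for $0 \leq t \leq s$ (indices read modulo $\val(\mu)$), so the successor sequence breaks into $s+1$ consecutive blocks of length $k$, each obtained from the previous by applying $g$, with $i_k^{g^s} = i_{(s+1)k} = i_{\val(\mu)}$, matching the displayed form exactly. The main obstacle is the first step: combining orientation-preservation (to force each $g \in G_{\mu}$ to act as a rotation) with freeness on $\Gamma_1$ (to make $a$ injective and pin down the rotation amount as $\val(\mu)/|G_{\mu}|$) is what forces the cyclic structure and determines all the parameters; once this is established, the remaining counting and rewriting of the successor sequence are routine.
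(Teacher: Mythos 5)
Your proof is correct, but it takes a genuinely different route from the paper's. The paper argues directly on the successor sequence: it takes $k$ minimal with $i_{k+1}=i^g$ for some $g$, shows by a short contradiction (using orientation\--preservation) that no two of $i_1,\dots,i_k$ lie in the same $G$-orbit, concludes $\val(\bar\mu)=k$, and then iterates to obtain the block decomposition. You instead work with the vertex stabilizer $G_\mu$: orientation\--preservation forces each element to act as a rotation of the cyclic successor sequence, freeness on $\Gamma_1$ makes the rotation\--amount homomorphism $G_\mu\to\Zbb/\val(\mu)\Zbb$ injective (so $G_\mu$ is cyclic with $|G_\mu|\mid\val(\mu)$ and image generated by $\val(\mu)/|G_\mu|$), and an orbit count of edge-ends gives $\val(\bar\mu)=\val(\mu)/|G_\mu|$, from which $k$, $s+1=|G_\mu|$ and the generator $g$ can be read off. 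Your approach is more structural and yields for free several facts the paper only extracts later in Lemma~\ref{lem:assocW} (that $g=\omega_{\bar\mu}$ generates the stabilizer, that $\ord$ of this element is $s+1$, and that the index of $\langle g\rangle$ equals the size of the vertex orbit), whereas the paper's argument is more elementary and avoids any appeal to orbit\--stabilizer counting. The one step you assert rather than justify is the identity $\val(\bar\mu)\,|G| = |G\cdot\mu|\,\val(\mu)$: it requires observing that, precisely because the action is free on $\Gamma_1$, every $G$-orbit of edge-ends at vertices over $\bar\mu$ has size $|G|$ and these orbits biject with the edge-ends of $\Gb$ at $\bar\mu$ (with the loop\--doubling convention handled correctly). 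This is true and no less rigorous than the paper's own ``From this it follows that $\val(\bar\mu)=k$,'' but it is the point a referee would ask you to spell out.
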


\begin{proof}
Let $\mu \in \Gamma_0$ and $i \in \Gamma_1$ be incident with $\mu$. If $\val(\mu) = 1$ then clearly $\val(\bar{\mu}) = 1$ and we are done. So assume that $\val(\mu) > 1$. Consider the successor sequence of $i$ at $\mu$, $i = i_1, i_2, \ldots , i_{\val(\mu)}$. Recall that $i_{\val(\mu)+1} = i$ since $i$ is the successor of $i_{\val(\mu)}$.
There is $k$ minimal with $1 \leq k \leq \val(\mu)$ and $i_{k+1} = i^g$ for some $g \in G$. If $1 \leq \alpha < \beta \leq k$ and $i_\alpha = i_{\beta+1}^h$ for some $h \in G$ then $\alpha = 1$ and $\beta = k$. To see this, if $\alpha = 1$ then $\beta = k$ from the choice of $k$. On the other hand, if $\alpha > 1$ and there exists $h$ with $i_\alpha = i_{\beta+1}^h$, using Definition~\ref{dfn:Br_action}(2) we see that $i_1 = i_{\beta-\alpha+2}^h$. Therefore $\beta-\alpha+2 = k+1$ by definition of $k$ and hence $\beta-\alpha = k-1$. But this contradicts $\alpha \geq 2$ and $\beta \leq k$. From this it follows that $\val(\bar{\mu}) = k$.

Repeating this argument shows that the successor sequence of $i$ at $\mu$ can be written as
$$i = i_1, i_2 , \ldots , i_k, i_1^g, i_2^g, \ldots , i_k^g, \ldots , i_1^{g^s}, i_2^{g^s}, \ldots , i_k^{g^s} = i_{\val(\mu)}$$ for some $s \geq 0$.
Hence $\val(\bar{\mu})(s+1)  = \val(\mu)$ and we are done.
\end{proof}

\begin{proposition}\label{prop:mult}
Suppose there is a free Brauer action of $G$ on $(\Gamma, \mathfrak{o}, m)$.
Let $\om$ be the function
$$\om \colon \Gb_0 \rightarrow \mathbb{N} \setminus \{0\}, \ \ \ \ \bar{\mu} \mapsto \val(\mu)m(\mu)/\val(\bar{\mu}).$$ Then there is a cyclic ordering $\omo$ induced by $\mo$ such that $(\Gb, \omo, \om)$ is a Brauer graph.
\end{proposition}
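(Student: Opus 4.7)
The plan is to verify three items: that $\om$ is a well-defined function $\Gb_0 \to \mathbb{N}\setminus\{0\}$, that there is a canonical cyclic ordering $\omo$ at each vertex of $\Gb$, and that the resulting triple $(\Gb, \omo, \om)$ satisfies the definition of a Brauer graph.

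For the first item, I would note that Definition~\ref{dfn:Br_action}(2) implies $\val(\mu^g) = \val(\mu)$ for every $g \in G$ (successors are preserved, hence so are valencies), and Definition~\ref{dfn:Br_action}(3) gives $m(\mu^g) = m(\mu)$. Hence $\val(\mu)m(\mu)$ is constant on $G$-orbits, and dividing by the orbit-invariant $\val(\bar\mu)$ yields a well-defined function on $\Gb_0$. Lemma~\ref{lemma:val}(3) gives $\val(\mu)/\val(\bar\mu) = s + 1 \in \mathbb{N}\setminus\{0\}$, so $\om(\bar\mu) = (s+1)m(\mu) \in \mathbb{N}\setminus\{0\}$.

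To construct $\omo$, at each $\bar\mu \in \Gb_0$ and each edge $\bar i$ incident with $\bar\mu$, I would pick a lift $\mu \in \Gamma_0$ of $\bar\mu$ and a lift $i \in \Gamma_1$ of $\bar i$ incident with $\mu$. Lemma~\ref{lemma:val} writes the successor sequence of $i$ at $\mu$ in the form
\[
i = i_1, i_2, \ldots, i_k, i_1^g, i_2^g, \ldots, i_k^{g^s},
\]
with $k = \val(\bar\mu)$, and I declare the successor of $\bar i_r$ at $\bar\mu$ in $\omo$ to be $\bar i_{r+1}$, indices read cyclically modulo $k$. Using Definition~\ref{dfn:Br_action}(2), replacing the lift $\mu$ by $\mu^h$ rotates the entire successor sequence by the action of $h$, which preserves $G$-orbits, so the projected cyclic sequence $\bar i_1, \ldots, \bar i_k$ is independent of the choice of lift; a different choice of starting edge $\bar i$ produces a cyclic rotation of the length-$k$ block, giving the same cyclic ordering.

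The main technical point---and the step I expect to be the principal obstacle---is to confirm that the length-$k$ sequence $\bar i_1, \ldots, \bar i_k$ really records each edge-endpoint incidence at $\bar\mu$ in $\Gb$ exactly once, so that each non-loop at $\bar\mu$ appears once and each loop appears twice, as required by the Brauer-graph convention. This I would establish by tracking edge-endpoint incidences under the quotient map: the $\val(\mu)$ incidences $(i_r, \mu)$ split into $k = \val(\bar\mu)$ distinct $G$-orbits, and the periodicity exhibited in Lemma~\ref{lemma:val} ensures that the first-block incidences $(i_1, \mu), \ldots, (i_k, \mu)$ hit every orbit exactly once. A loop at $\bar\mu$ in $\Gb$ corresponds to two distinct edge-endpoint incidences sharing the same underlying edge, which is precisely the situation in which the Brauer successor sequence records a loop twice. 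With these pieces in place, $\Gb$ is finite and connected because $\Gamma$ is, and contains at least one edge because $\Gamma$ does, so $(\Gb, \omo, \om)$ is a Brauer graph.
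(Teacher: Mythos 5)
Your proposal is correct and follows essentially the same route as the paper: well-definedness of $\om$ from Definition~\ref{dfn:Br_action}(3) together with Lemma~\ref{lemma:val}, and the cyclic ordering $\omo$ obtained by projecting the first block $i_1,\dots,i_k$ of the successor sequence supplied by Lemma~\ref{lemma:val}. The point you flag as the principal obstacle---that the first block meets each $G$-orbit of incidences exactly once---is exactly the content of $\val(\bar\mu)=k$ already proved in Lemma~\ref{lemma:val}, so your argument simply makes explicit the lift-independence that the paper leaves implicit.
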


\begin{proof}
We begin by showing that the cyclic ordering $\mo$ induces a cyclic ordering $\omo$. Let $\mu \in \Gamma_0$ and $i \in \Gamma_1$ be incident with $\mu$.
If $\val(\mu) = 1$ so that $\val(\bar{\mu}) = 1$, then the cyclic ordering on $\Gb$ is the only one possible.
So assume that $\val(\mu) > 1$. Consider the successor sequence of $i$ at $\mu$, $i = i_1, i_2, \ldots , i_{\val(\mu)}$. Let $k$ be as in Lemma~\ref{lemma:val}. We take the successor sequence for $\bar{i}$ at $\bar{\mu}$ to be $\bar{i}_1, \bar{i}_2, \ldots , \bar{i}_k$. This gives the desired cyclic ordering $\omo$. In particular, if $j$ is the successor of $i$ at vertex $\mu$ then $\bar{j}$ is the successor of $\bar{i}$ at vertex $\bar{\mu}$.

The set function $\om$ is well-defined by Definition~\ref{dfn:Br_action}(3) and Lemma~\ref{lemma:val}.
\end{proof}

We call the Brauer graph
$(\Gb, \omo, \om)$ in Proposition~\ref{prop:mult} the
{\em Brauer orbit graph} of $(\G,\mo,m)$ associated to the action of $G$.

\begin{proposition}\label{prop:consequences}
Suppose there is a free Brauer action of $G$ on $(\Gamma, \mathfrak{o}, m)$. Then the following properties hold.
\begin{enumerate}
\item $\om(\bar{\mu})\val(\bar{\mu}) = m(\mu)\val(\mu) = m(\mu^g)\val(\mu^g)$ for all $\mu \in \Gamma_0$ and $g \in G$.
\item Edge $i \in \Gamma_1$ is truncated at vertex $\mu$ if and only if $\bar{i} \in \Gb_1$ is truncated at vertex $\bar{\mu}$.
\item Edge $i \in \Gamma_1$ is truncated at vertex $\mu$ if and only if $i^g \in \Gamma_1$ is truncated at vertex $\mu^g$ for all $g \in G$.
\end{enumerate}
\end{proposition}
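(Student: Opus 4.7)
The plan is to derive all three parts from the formula $\om(\bar{\mu}) = \val(\mu)m(\mu)/\val(\bar{\mu})$ of Proposition~\ref{prop:mult}, the axioms of a Brauer action, and the characterization of truncation from Section~\ref{section:notation}: an edge $i$ with endpoint $\mu$ is truncated at $\mu$ precisely when $\val(\mu) = 1 = m(\mu)$, equivalently (since both values are positive integers) when the product $m(\mu)\val(\mu)$ equals $1$.

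For (1), I would simply unpack the definition of $\om$ to rewrite it as $\om(\bar{\mu})\val(\bar{\mu}) = m(\mu)\val(\mu)$. The second equality $m(\mu)\val(\mu) = m(\mu^g)\val(\mu^g)$ would then be obtained by combining condition (3) of Definition~\ref{dfn:Br_action}, which gives $m(\mu) = m(\mu^g)$, with the observation following that definition that $\val(\mu) = \val(\mu^g)$ (valencies are preserved because the $G$-action preserves the successor relation, so edges incident at $\mu$ correspond bijectively with edges incident at $\mu^g$, with loops accounted for twice on each side).

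For (2), the key step is to use (1) together with the positive-integer characterization of truncation. Since $m(\mu)\val(\mu) = \om(\bar{\mu})\val(\bar{\mu})$ and both sides are products of positive integers, the left side equals $1$ if and only if $\val(\mu) = m(\mu) = 1$, and the right side equals $1$ if and only if $\val(\bar{\mu}) = \om(\bar{\mu}) = 1$. I would also briefly note that $\bar{i}$ is an edge of $\Gb$ incident with $\bar{\mu}$ (since $i$ is incident with $\mu$), so that the statement ``$\bar{i}$ is truncated at $\bar{\mu}$'' is meaningful. Part (3) is then immediate from (1): the identity $m(\mu)\val(\mu) = m(\mu^g)\val(\mu^g)$ together with the same product-equals-$1$ criterion yields the desired equivalence.

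I do not expect any serious obstacle. The proposition is essentially a bookkeeping consequence of Proposition~\ref{prop:mult} once one recognises the reformulation of truncation in terms of $m(\mu)\val(\mu) = 1$; the only minor point to be careful about is distinguishing truncation of $i$ at $\mu$ from truncation at the other endpoint, but since the statements concern only the vertex $\mu$ (respectively $\mu^g$, respectively $\bar{\mu}$), there is nothing to reconcile.
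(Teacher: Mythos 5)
Your proposal is correct and matches the paper's own proof almost verbatim: part (1) is read off from the definition of $\om$ together with $m(\mu)=m(\mu^g)$ and $\val(\mu)=\val(\mu^g)$, and parts (2) and (3) follow from the characterization of truncation as $m(\mu)\val(\mu)=1$. No issues.
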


\begin{proof}
(1) is immediate from the definition of $\om$. For the proof of (2) we recall that the edge $i \in \Gamma_1$ is truncated at vertex $\mu$ if and only if
$m(\mu)\val(\mu) = 1$. Now, from (1), this holds if and only if
$\om(\bar{\mu})\val(\bar{\mu}) = 1$, which is precisely the condition that
$\bar{i} \in \Gb_1$ is truncated at vertex $\bar{\mu}$. Hence property (2) holds.
The proof of (3) is similar to that of (2), since $m(\mu)\val(\mu) = 1$ if and only if $m(\mu^g)\val(\mu^g) = 1$ from
Definition~\ref{dfn:Br_action}(3) and the subsequent remark.
\end{proof}

\begin{ex}
Let $(\G,\mo,m)$ be the Brauer graph
$$\xymatrix{
& \bullet\ar@{-}[rd]^{i_1} & & \bullet\ar@{-}[ld]_{i_2} & \\
\bullet\ar@{-}[rr]^{i_6} & & \mu\ar@{-}[rr]^{i_3} & & \bullet\\
& \bullet\ar@{-}[ur]_{i_5} & & \bullet\ar@{-}[ul]^{i_4} &
}$$
where each vertex has multiplicity 1.
Let $G$ be the cyclic group $G = {\mathbb Z}_2 = \langle g \mid g^2 = \id\rangle$ so that $G$ acts on $\Gamma$ with
$i_1^g = i_4, i_2^g = i_5, i_3^g = i_6$. Then necessarily $i_4^g = i_1, i_5^g = i_2, i_6^g = i_3$.
This is a free Brauer action of $G$ on $\Gamma$ and $\Gb$ is the Brauer graph
$$\xymatrix{
& \bullet\ar@{-}[rd]^{\overline{i_1}} & & \bullet\ar@{-}[ld]_{\overline{i_2}} & \\
& & \bar{\mu}\ar@{-}[rr]^{\overline{i_3}} & & \bullet
}$$
where $\bar{\mu}$ has multiplicity 2 and the other vertices have multiplicity 1.
\end{ex}

We now extend the concept of a Brauer action by defining a Brauer action on a quantized Brauer graph
$(\Gamma, \mo, m, q)$.

\begin{dfn}\label{dfn:quant_Br_action}
A Brauer action of a finite abelian group $G$ on the
Brauer graph $(\G,\mo,m)$ is a {\em Brauer action} of $G$ on the quantized Brauer graph $(\Gamma, \mo, m, q)$ if
$$\frac{q_{i,\mu}}{q_{i,\nu}} = \frac{q_{i^g,\mu^g}}{q_{i^g,\nu^g}},
$$
for all $g \in G$ and $\xymatrix{ \mu\ar@{-}[r]^i & \nu }$ in $\Gamma$ such that $i$ is not truncated at either of its endpoints.
\end{dfn}

To understand the importance of the quotients above, we note that the
relation of type one $q_{i,\mu}C_{i,\mu}^{m(\mu)}-q_{i,\nu}C_{i,\nu}^{m(\nu)}$
may be replaced by either
$C_{i,\mu}^{m(\mu)}-\frac{q_{i,\nu}}{q_{i,\mu}}C_{i,\nu}^{m(\nu)}$
or
$C_{i,\nu}^{m(\nu)}-\frac{q_{i,\mu}}{q_{i,\nu}}C_{i,\mu}^{m(\mu)}$
without altering the ideal $I_{\Gamma}$.   These last two relations
distinguish between the endpoints of the edge $i$.  We formalize
this as follows.  Let $\Y=\{i\in\Gamma_1\mid i \text{ is not
truncated at either of its endpoints}\}$.  Let $\E_1\colon\Y\to
\Gamma_0$ be a set function such that $\E_1(i)$ is an endpoint of
$i$.  Let $\E_2\colon\Y\to\Gamma_0$ be the set function with the
property that $\E_1(i)$ and $\E_2(i)$ are the two endpoints of $i$.
We note that $i$ is a loop if and only if $\E_1(i)=\E_2(i)$. From
the above remarks, we see that, for each $i\in \Y$, the relation of
type one may be replaced by
\[
C_{i,\E_1(i)}^{m(\E_1(i))}-\frac{q_{i,\E_2(i)}}{q_{i,\E_1(i)}}C_{i,\E_2(i)}^{m(\E_2(i))}.
\]

\begin{lemma}\label{lemma:pickE}
Suppose there is a free Brauer action of $G$ on $(\Gamma,\mathfrak{o},m,q)$.  Then there is a
choice of set functions $\E_1$ and $\E_2$ such that
\[
(\E_j(i))^g=\E_j(i^g)
\]
for all $i\in\Y, g\in G, j=1, 2$.
\end{lemma}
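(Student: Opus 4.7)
The plan is to build $\E_1$ (which then determines $\E_2$) by picking an arbitrary labeling of endpoints on a set of $G$-orbit representatives in $\Y$ and propagating the labeling across each orbit by the group action. First note that $\Y$ is $G$-invariant: by Proposition~\ref{prop:consequences}(3), if $i \in \Y$ and $g \in G$ then $i^g$ is still non-truncated at both of its endpoints, so $i^g \in \Y$. Moreover, since the Brauer action of $G$ on $(\G,\mo,m)$ is free, its restriction to $\Y$ is still free, so the $G$-orbits partition $\Y$ into free $G$-sets.

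Concretely, I would choose a set $S \subseteq \Y$ of representatives of the $G$-orbits on $\Y$. For each $i \in S$, pick one of the two endpoints of $i$ (arbitrarily) and call it $\E_1(i)$; then $\E_2(i)$ is forced to be the other endpoint of $i$, with $\E_1(i) = \E_2(i)$ if $i$ is a loop. For an arbitrary $j \in \Y$, since the action of $G$ on $\Y$ is free, there is a unique pair $(i,g) \in S \times G$ with $j = i^g$, and I would define
\[
\E_k(j) = (\E_k(i))^g \quad \text{for } k = 1,2.
\]
Because the action of $G$ on $\G$ preserves incidence, $(\E_k(i))^g$ really is an endpoint of $i^g = j$, so this is a legitimate choice.

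It remains to verify the equivariance condition. Given $j = i^g$ with $i \in S$, $g \in G$, and any $h \in G$, we have $j^h = i^{gh}$, and by the definition above applied to $j^h$,
\[
\E_k(j^h) = (\E_k(i))^{gh} = \bigl((\E_k(i))^g\bigr)^h = (\E_k(j))^h,
\]
which is exactly the desired relation $(\E_k(i'))^h = \E_k((i')^h)$ with $i' = j$. This covers every element of $\Y$ since $S$ contains a representative of each orbit.

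The delicate point -- and the reason freeness of the Brauer action is needed -- is well-definedness of the extension step: if some nontrivial $g \in G$ fixed an edge $i \in \Y$ while interchanging its two endpoints, then any choice of $\E_1(i)$ would be forced to satisfy $(\E_1(i))^g = \E_1(i^g) = \E_1(i)$, yet $g$ would send $\E_1(i)$ to the other endpoint, which is a contradiction. The freeness hypothesis rules this out: whenever $i^g = i$ we must have $g = \id_G$, so $(\E_k(i))^g = \E_k(i)$ trivially. Consequently no compatibility obstruction arises and the equivariant extension is well-defined.
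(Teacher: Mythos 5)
Your proof is correct and follows essentially the same route as the paper: decompose $\Y$ into $G$-orbits (using Proposition~\ref{prop:consequences}(3) to see $\Y$ is $G$-invariant), choose an arbitrary endpoint for one representative per orbit, and propagate by the group action. The paper's version is terser, while you additionally spell out the well-definedness of the extension (unique $(i,g)$ with $j=i^g$, thanks to freeness on edges) and the equivariance check, which is a welcome but not essentially different elaboration.
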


\begin{proof}
It suffices to prove this for $j=1$. Proposition~\ref{prop:consequences}(3)
implies that $\Y$ is a disjoint union of
orbits.  For each orbit, choose some edge $i$ in the orbit and arbitrarily select
an endpoint; define $\E_1(i)$ to be this endpoint.  For all other $i^g$
in the orbit of $i$, define $\E_1(i^g)=(\E_1(i))^g$.  This yields a
choice of $\E_1$ that satisfies the desired property.
\end{proof}

If there is a free Brauer action of $G$ on a
quantized Brauer graph $(\Gamma,\mathfrak{o},m,q)$, then we henceforth assume
that the functions $\E_1$ and $\E_2$ satisfy
$(\E_j(i))^g=\E_j(i^g)$, for all $i\in\Y, g\in G$, and $j=1, 2$.
We are now in a position to define a
quantizing function $\oq$ for the
Brauer orbit graph $(\Gb,\omo,\om)$ and thus
extend Proposition~\ref{prop:mult}. Recall that
\[ \X_{\overline{\Gamma}}=\{(\bar
i,\bar{\mu})\mid \bar{i}\in\Gb_1 \text{ is incident with }
\bar{\mu}\in\Gb_0 \text{ and } \bar{i} \text{ is not truncated at
either of its endpoints}\}. \]
Define $\oq\colon
\X_{\overline{\Gamma}}\to K\setminus\{0\}$ by
\[
\oq((\bar{i},\bar{\mu}))=
\frac{q_{i,\mu}}{q_{i,\E_1(i)}}.
\]
We denote $\oq((\bar{i},\bar{\mu}))$ by $\oq_{\bar{i},\bar{\mu}}$
and note that, by the above discussion, $\oq$ is well defined.
Summarizing, we have the following result.

\begin{proposition}\label{prop:bar-q}
Suppose there is a free Brauer action of $G$ on $(\Gamma,\mo,m,q)$.
Then there is a quantizing function $\oq$ so that $(\Gb, \omo, \om, \oq)$ is a quantized Brauer graph
satisfying the property that if $i\in\Y$ with endpoints $\mu$ and $\nu$, then
\[
\frac{q_{i,\mu}}{q_{i,\nu}}=\frac{\oq_{\bar{i},\bar{\mu}}}{\oq_{\bar{i},\bar{\nu}}}.
\]
\end{proposition}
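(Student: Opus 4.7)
The plan is to observe that Proposition~\ref{prop:mult} already provides the Brauer graph structure $(\Gb, \omo, \om)$ on the orbit graph, so the only remaining task is to verify that the given formula $\oq_{\bar{i},\bar{\mu}} = q_{i,\mu}/q_{i,\E_1(i)}$ defines a function $\X_{\overline{\Gamma}} \to K \setminus \{0\}$ with the displayed compatibility. Two preliminary points are immediate: the target really is $\X_{\overline{\Gamma}}$, because Proposition~\ref{prop:consequences}(2) ensures $\bar{i}$ is non-truncated at $\bar{\mu}$ whenever $(i,\mu) \in \X_\Gamma$ and $i \in \Y$; and the formula takes values in $K \setminus \{0\}$ since $q$ does.

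The principal obstacle is the well-definedness check: $\oq_{\bar{i},\bar{\mu}}$ must not depend on the chosen lift of the orbit. Given an alternative representative $(i^g, \mu^g)$, I would apply the quantized Brauer action axiom (Definition~\ref{dfn:quant_Br_action}) to the edge $i$ with endpoints $\mu$ and $\E_1(i)$ -- the cases $\mu = \E_1(i)$ and $i$ a loop being trivial, since both sides of the defining formula then equal $1$ -- to obtain
\[
\frac{q_{i,\mu}}{q_{i,\E_1(i)}} = \frac{q_{i^g,\mu^g}}{q_{i^g,\E_1(i)^g}}.
\]
Lemma~\ref{lemma:pickE} then supplies the crucial rewrite $\E_1(i)^g = \E_1(i^g)$, so the right-hand side is precisely the value of $\oq_{\bar{i},\bar{\mu}}$ computed using $(i^g, \mu^g)$. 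Without an equivariant choice of $\E_1$ this step would fail, which is exactly why Lemma~\ref{lemma:pickE} was set up.

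The displayed quotient identity is then bookkeeping: for $i \in \Y$ with endpoints $\mu, \nu$, the common factor $q_{i,\E_1(i)}$ in the denominators of $\oq_{\bar{i},\bar{\mu}}$ and $\oq_{\bar{i},\bar{\nu}}$ cancels in the ratio, leaving exactly $q_{i,\mu}/q_{i,\nu}$. In summary, beyond the well-definedness step the proof is a direct substitution; the whole argument reduces to orbit-invariance supplied by the quantized Brauer action axiom together with the equivariant ordering of endpoints coming from Lemma~\ref{lemma:pickE}.
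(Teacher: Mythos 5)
Your argument is correct and coincides with the paper's own treatment: the paper defines $\oq_{\bar{i},\bar{\mu}}=q_{i,\mu}/q_{i,\E_1(i)}$ in the discussion preceding the proposition and asserts well-definedness from Definition~\ref{dfn:quant_Br_action} together with the equivariant choice of $\E_1$ from Lemma~\ref{lemma:pickE}, which is exactly the reduction you carry out. Your write-up merely makes explicit the case analysis (whether $\mu=\E_1(i)$, $\mu=\E_2(i)$, or $i$ is a loop) and the final cancellation of $q_{i,\E_1(i)}$ that the paper leaves implicit.
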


We call the quantized Brauer graph
$(\Gb, \omo, \om,\oq)$ in Proposition~\ref{prop:bar-q} the
{\em quantized Brauer orbit graph} of $(\G,\mo,m,q)$ associated to the action of $G$.

Now we turn our attention to the Brauer graph algebra associated to a Brauer graph on which there is a free Brauer action.

\begin{lemma}\label{lemma:freeactionQg}
Suppose there is a free Brauer action of $G$ on the Brauer graph $(\Gamma, \mo, m)$. Then there is an induced free group action of $G$ on the quiver $\Qg$. Moreover if $(\Gamma, \mo, m, q)$ is a quantized Brauer graph then the induced $G$-action satisfies $$x \in \Ig \mbox{ if and only if } x^g \in \Ig \mbox{ for all } g \in G.$$
\end{lemma}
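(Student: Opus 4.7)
The plan is to define the $G$-action on $\Qg$ directly from the combinatorial data of the Brauer graph, verify freeness at the level of vertices of $\Qg$, and then check generator-by-generator that each type of relation in $\rho_\Gamma$ is preserved (up to scalar) by the induced action on paths.

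To set up the action on $\Qg$, I would put $v_i^g := v_{i^g}$ on vertices, which is well defined because the correspondence between $\Gamma_1$ and $(\Qg)_0$ is bijective. For arrows, if $a\colon v_i\to v_j$ is the arrow corresponding to $j$ being the successor of $i$ at $\mu$, then Definition~\ref{dfn:Br_action}(2) guarantees that $j^g$ is the successor of $i^g$ at $\mu^g$, giving a unique arrow $v_{i^g}\to v_{j^g}$ which we declare to be $a^g$. A small subtlety needs attention at loops: since $\Gamma_1$ lists each loop as two edges with distinct successors, one must check that the $G$-action on $\Gamma_1$ permutes the two copies of each loop compatibly with the orientation-preserving condition, so that the two arrows at a loop are sent to the two arrows at its image loop. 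Freeness of the induced action is then immediate: if $v_i^g = v_i$ then $i^g=i$ in $\Gamma_1$, whence $g=\id_G$ by the freeness of the Brauer action on edges; and an arrow is determined by its source, target and (in the loop case) the chosen copy, so freeness on vertices and edges forces freeness on arrows.

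For the second statement, extend the action multiplicatively to paths and check each generating relation. For a type one relation $q_{i,\mu}C_{i,\mu}^{m(\mu)}-q_{i,\nu}C_{i,\nu}^{m(\nu)}$, the orientation-preserving property sends $C_{i,\mu}$ to $C_{i^g,\mu^g}$, and Definition~\ref{dfn:Br_action}(3) gives $m(\mu)=m(\mu^g)$. Hence the image is $q_{i,\mu}C_{i^g,\mu^g}^{m(\mu^g)}-q_{i,\nu}C_{i^g,\nu^g}^{m(\nu^g)}$; using the identity $q_{i,\mu}/q_{i,\nu}=q_{i^g,\mu^g}/q_{i^g,\nu^g}$ from Definition~\ref{dfn:quant_Br_action}, multiplication by the nonzero scalar $q_{i^g,\mu^g}/q_{i,\mu}$ produces the corresponding type one relation at $i^g$, which lies in $\Ig$. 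A type two relation $C_{i,\nu}^{m(\nu)}b_1$ (with $i$ truncated at $\mu$) is sent to $C_{i^g,\nu^g}^{m(\nu^g)}b_1^g$, and by Proposition~\ref{prop:consequences}(3) the edge $i^g$ is truncated at $\mu^g$, so this is again a type two relation. For a type three relation $ab$, if $a^gb^g$ were a subpath of some cycle $C_{k,\tau}$, then applying $g^{-1}$ would make $ab$ a subpath of $C_{k^{g^{-1}},\tau^{g^{-1}}}$, contradicting the hypothesis; so $a^gb^g$ is again of type three.

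This shows $x\in\Ig\Rightarrow x^g\in\Ig$ on generators and hence on all elements, and the converse follows by applying the same argument to $g^{-1}$. The step I expect to demand the most care is the type one case, where preservation of the ideal is not pointwise preservation of generators but only preservation up to a nonzero scalar, which is exactly where the quotient condition of Definition~\ref{dfn:quant_Br_action} enters; handling the bookkeeping for loops (which contribute two arrows and require the orientation-preserving action to respect the distinguished edge convention) is the second point where mistakes would be easiest to make.
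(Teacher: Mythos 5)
Your proposal is correct and follows essentially the same route as the paper: the action on $\Qg$ is defined via $v_i\mapsto v_{i^g}$ and the orientation-preserving condition, freeness is inherited from freeness on $\Gamma_1$, and the relations are checked type by type using Definition~\ref{dfn:quant_Br_action} for type one and Proposition~\ref{prop:consequences}(3) for type two. The only cosmetic difference is that the paper rewrites the type one relation in the normalized form $C_{i,\E_1(i)}^{m(\E_1(i))}-\frac{q_{i,\E_2(i)}}{q_{i,\E_1(i)}}C_{i,\E_2(i)}^{m(\E_2(i))}$ with the equivariant choice of $\E_1,\E_2$ from Lemma~\ref{lemma:pickE}, so the image under $g$ is literally the corresponding relation, whereas you keep the unnormalized form and absorb a nonzero scalar; both arguments are valid.
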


\begin{proof}
Let $v_i$ be a vertex in the quiver $\Qg$ so that $v_i$ corresponds to the
edge $i$ in $\Gamma$. We define the action of $G$ on the vertices of $\Qg$
by $(v_i)^g = v_{i^g}$ for $g \in G$. Let $a$ be an arrow from $v_i$ to $v_j$ in the
quiver $\Qg$; then $i$ and $j$ are two edges in $\G$ incident with the
same vertex and such that $j$ is the successor of $i$.
By Definition~\ref{dfn:Br_action}(2), the
edge $j^g$ is the successor of the edge $i^g$ for all $g \in G$. So, for $g \in G$, we define
$a^g$ to be the arrow from $v_{i^g}$ to $v_{j^g}$ in
$\Qg$. It is now straightforward to show, for all $g, h \in G$, that $((v_i)^g)^h =
(v_{i^g})^h = v_{(i^g)^h}= v_{i^{(gh)}} = (v_i)^{(gh)}$ and, similarly, that $(a^g)^h = a^{(gh)}$.

To show that this is a free action on $\Qg$, suppose that, for a vertex
$v_i$ in $\Qg$ (which corresponds to an edge $i$ in $\G$) and $g$ in $G$, we have
$(v_i)^g = v_i$. Then $v_{i^g} = v_i$ so by definition of $\Qg$ we have that $i^g = i$.
Hence by Definition~\ref{dfn:free_Br_action} we have $g = \id_G$. It follows that $G$ acts freely on $\Qg$.

Now suppose that we have a type one relation
$$C_{i,\E_1(i)}^{m(\E_1(i))}-\frac{q_{i,\E_2(i)}}{q_{i,\E_1(i)}}C_{i,\E_2(i)}^{m(\E_2(i))}.$$
From Definition~\ref{dfn:quant_Br_action} and Lemma~\ref{lemma:pickE} we have that
$$\left ( C_{i,\E_1(i)}^{m(\E_1(i))}-\frac{q_{i,\E_2(i)}}{q_{i,\E_1(i)}}C_{i,\E_2(i)}^{m(\E_2(i))}\right )^g =
C_{i^g,\E_1(i^g)}^{m(\E_1(i^g))}-\frac{q_{i^g,\E_2(i^g)}}{q_{i^g,\E_1(i^g)}}C_{i^g,\E_2(i^g)}^{m(\E_2(i^g))}$$
which is in $\Ig$.
It is easy to see that if $x$ is a relation of type 2 or type 3 then $x^g$ is also a relation of type 2 or type 3 respectively, for all $g \in G$. This completes the proof.
\end{proof}

Suppose there is a free Brauer action of $G$ on $(\Gamma, \mo, m)$. The induced action of $G$ on $\Qg$ given in Lemma~\ref{lemma:freeactionQg},
allows us to consider the orbit quiver $\barQg$, which is defined in the
Appendix.

\begin{proposition}\label{EqualityQuivers}
Suppose there is a free Brauer action of $G$ on $(\Gamma, \mo, m)$. Then $\barQg \cong \Qgb$ as quivers.
\end{proposition}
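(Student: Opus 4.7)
The plan is to construct an explicit quiver isomorphism $\phi\colon \barQg \to \Qgb$ by first defining a vertex bijection, then defining a bijection on arrows, using Lemma~\ref{lemma:val} and Proposition~\ref{prop:mult} to transport successor data between the two sides.

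On vertices, the map is $\bar v_i \mapsto v_{\bar i}$. Vertices of $\barQg$ are $G$-orbits of vertices of $\Qg$; since $\Qg$-vertices are in bijection with edges of $\G$ and the induced $G$-action on $\Qg$ (Lemma~\ref{lemma:freeactionQg}) sends $v_i$ to $v_{i^g}$, these orbits correspond exactly to the $G$-orbits of $\G_1$, i.e., the edges of $\Gb$, which in turn index the vertices of $\Qgb$. This gives a well-defined bijection on vertex sets.

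For arrows, every arrow of $\Qg$ from $v_i$ to $v_j$ arises from an instance of $j$ being the successor of $i$ at some vertex $\mu \in \G_0$ with $i$ not truncated at $\mu$. By Proposition~\ref{prop:mult}, $\bar j$ is then the successor of $\bar i$ at $\bar{\mu}$ in $\Gb$, and by Proposition~\ref{prop:consequences}(2), $\bar i$ is not truncated at $\bar \mu$, so $\Qgb$ contains a corresponding arrow $v_{\bar i} \to v_{\bar j}$. I would define $\phi$ on an orbit $\bar a$ to be this arrow. Well-definedness follows because any $G$-translate of the triple $(i,j,\mu)$ produces the same orbit data $(\bar i, \bar j, \bar\mu)$ in $\Gb$.

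The main step is bijectivity on arrows, which I would verify by a local count at each $\bar\mu \in \Gb_0$ and each edge $\bar i$ incident with $\bar\mu$. On the $\Qgb$-side, the cycle $C_{\bar i,\bar\mu}$ contributes exactly $\val(\bar\mu) = k$ arrows at $\bar\mu$. On the $\Qg$-side, choose a lift $\mu$ of $\bar\mu$; by Lemma~\ref{lemma:val}, the cycle $C_{i,\mu}$ in $\Qg$ has length $\val(\mu) = k(s+1)$, and the element $g \in G$ of Lemma~\ref{lemma:val} lies in the stabilizer of $\mu$ and acts on this cycle as a rotation by $k$ positions, producing exactly $k$ $G$-orbits of arrows. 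Any other lift $\mu^h$ of $\bar\mu$ gives a cycle obtained from $C_{i,\mu}$ by acting by $h$, hence contributes the same $k$ orbits in $\barQg$. Matching these orbit counts with the arrows at $\bar\mu$ in $\Qgb$ and summing over $\Gb_0$ yields the desired bijection on arrows.

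I expect the main obstacle to be careful bookkeeping when $G$ does not act freely on $\G_0$, so that nontrivial stabilizers cyclically permute successor sequences and one must rule out over- or under-counting of orbits, together with the handling of loops (which appear twice in $\G_1$ and in $\Gb_1$). Both issues are encoded precisely in Lemma~\ref{lemma:val} and Proposition~\ref{prop:consequences}, which provide exactly the compatibility needed to make the local count go through.
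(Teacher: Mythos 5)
Your proposal is correct and follows essentially the same route as the paper: both identify the vertices of $\barQg$ and of $\Qgb$ with the $G$-orbits of edges of $\Gamma$, and the arrows on each side with orbits of successor pairs $(i,j)$ at vertices of $\Gamma$, using Definition~\ref{dfn:Br_action}(2) and Proposition~\ref{prop:mult} to transport the successor data. The only difference is that where the paper simply asserts the resulting correspondence on arrows is a bijection, you make this explicit with a local orbit count at each $\bar\mu$ via Lemma~\ref{lemma:val}, which is a legitimate (and slightly more careful) way to finish the same argument.
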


\begin{proof}
We prove that $(\barQg)_0 \cong (\Qgb)_0$ and $(\barQg)_1 \cong (\Qgb)_1$.

The set $(\barQg)_0$ corresponds to the orbits of vertices of $\Qg$ under the action
of $G$. The set $(\Qgb)_0$ is the set of vertices of $\Qgb$ and so corresponds to the set of edges of $\Gb$, that is, to the orbits of edges of $\Gamma$ under the action of $G$.
By definition of $\Qg$, the vertices of $\Qg$ correspond to the edges of $\Gamma$. From the definition of the action
of $G$ on $\Qg$ in Lemma~\ref{lemma:freeactionQg}, the two sets of orbits are in bijection. Thus $(\barQg)_0 \cong (\Qgb)_0$.

The set $(\Qgb)_1$ is the set of arrows of
$\Qgb$, and so corresponds to the orbits under the action of $G$ of a vertex
$\mu \in \Gamma$ with two incident edges $i$ and $j$ where $j$ is the successor of $i$.
By Definition~\ref{dfn:Br_action}(2), the edges $i^g$ and $j^g$ are
incident with the vertex $\mu^g$ and $j^g$ is the successor of
$i^g$. Thus the orbit of this action on an arrow $\xymatrix{v_i \ar[r]^{\alpha} & v_j}$ in $\Qg$
corresponds to an arrow
$\xymatrix{v_{\bar{i}} \ar[r]^{\bar{\alpha}} & v_{\bar{j}}}$ in $(\Qgb)_1$.
On the other hand, an element of $(\barQg)_1$ corresponds to an orbit $\overline{\xymatrix{v_i
\ar[r]^\alpha & v_j}}$ under the action of $G$ of an arrow $\xymatrix{v_i
\ar[r]^\alpha & v_j}$ in $\Qg$.
It is now easy to see that there is a bijection between
elements $\xymatrix{ v_{\bar{i}} \ar[r]^{\bar{\alpha}}
& v_{\bar{j}}}$ of $(\Qgb)_1$ and elements
$\overline{\xymatrix{v_i \ar[r]^\alpha & v_j}}$ of $(\barQg)_1$.
\end{proof}

Suppose there is a free Brauer action of $G$ on the quantized Brauer graph $(\Gamma, \mo, m, q)$. Then, from the Appendix and  Lemma~\ref{lemma:freeactionQg}, we have that $G$ acts freely on $\Qg$ and $\overline{\A_\Gamma} = K\overline{\Qg}/\overline{\Ig}$.

\begin{theorem}\label{thm:firstiso}
Let $G$ be a finite abelian group with a free Brauer action on the quantized Brauer graph $(\Gamma, \mo, m, q)$. Let $(\Gb, \omo, \om, \oq)$ be the associated quantized Brauer orbit graph. Let $\A_\Gamma$ be the Brauer graph algebra associated to $(\Gamma, \mo, m, q)$ and $\A_{\Gb}$ the Brauer graph algebra associated to $(\Gb, \omo, \om, \oq)$. Then
$$\overline{\A_\Gamma} \cong \A_{\Gb}.$$
\end{theorem}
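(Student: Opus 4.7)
My plan is to use Proposition~\ref{EqualityQuivers} to fix a $K$-algebra isomorphism $\phi\colon K\barQg \xrightarrow{\sim} K\Qgb$ sending the orbit vertex $\overline{v_i}$ to $v_{\bar{i}}$ and the orbit arrow $\overline{a}\colon \overline{v_i}\to\overline{v_j}$ to the corresponding arrow $\bar{a}\colon v_{\bar{i}}\to v_{\bar{j}}$, and then to verify that $\phi(\barIg)=\Igb$. Since the remark immediately preceding the theorem identifies $\overline{\A_\Gamma}$ with $K\barQg/\barIg$ and by definition $\A_{\Gb}=K\Qgb/\Igb$, the desired isomorphism on quotients will follow.

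The core computation compares cycles. Fix $\mu\in\Gamma_0$ and $i\in\Gamma_1$ incident with $\mu$ with $\val(\mu)>1$, and let $C_{i,\mu}=a_1 a_2\cdots a_{\val(\mu)}$ be the associated cycle in $\Qg$. By Lemma~\ref{lemma:val}, the successor sequence of $i$ at $\mu$ decomposes into $s+1$ consecutive $G$-translates of the block $i_1,\ldots,i_k$ with $k=\val(\bar{\mu})$, and the arrow $a_{r+k}$ is the $G$-translate $(a_r)^g$ of $a_r$. Thus each of the $s+1$ blocks projects onto the same cycle $C_{\bar{i},\bar{\mu}}=\bar a_1\cdots\bar a_k$ in $\Qgb$, giving $\phi(\overline{C_{i,\mu}})=C_{\bar{i},\bar{\mu}}^{\,s+1}$. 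Combined with the identity $\om(\bar{\mu})=(s+1)m(\mu)$ from Proposition~\ref{prop:mult}, this yields
\[
\phi\bigl(\overline{C_{i,\mu}^{m(\mu)}}\bigr)=C_{\bar{i},\bar{\mu}}^{\,\om(\bar{\mu})}.
\]

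Using this key identity, I check that each of the three types of generators of $\Ig$ projects under $\phi$ to a generator of $\Igb$, and vice versa. For a type one relation $C_{i,\E_1(i)}^{m(\E_1(i))}-(q_{i,\E_2(i)}/q_{i,\E_1(i)})C_{i,\E_2(i)}^{m(\E_2(i))}$, the image under $\phi$ is the same expression with cycles replaced by their $\Qgb$-counterparts, and Proposition~\ref{prop:bar-q} identifies the coefficient with $\oq_{\bar{i},\overline{\E_2(i)}}/\oq_{\bar{i},\overline{\E_1(i)}}$, producing a type one relation of $\Igb$. For a type two relation $C_{i,\nu}^{m(\nu)}b_1$, Proposition~\ref{prop:consequences}(2) ensures that truncation is preserved on passing to the orbit graph, so the image $C_{\bar{i},\bar{\nu}}^{\om(\bar{\nu})}\bar{b}_1$ is a type two relation of $\Igb$. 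Type three monomial relations correspond bijectively since the subpaths of $C_{i,\mu}$ project exactly onto the subpaths of $C_{\bar{i},\bar{\mu}}$.

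The main technical obstacle will be the bookkeeping around loops: a loop is recorded twice, with distinct successors, in a successor sequence, and one must check that the two occurrences of a $G$-orbit of a loop survive as two occurrences in the projected sequence in $\Qgb$, so that the cycle identity above is an equality of paths rather than merely of cyclic shifts. Together with the $G$-equivariance of $\E_1,\E_2$ provided by Lemma~\ref{lemma:pickE}---which guarantees that the scalar $q_{i,\E_2(i)}/q_{i,\E_1(i)}$ depends only on the $G$-orbit of $i$---this completes the identification $\phi(\barIg)=\Igb$ and hence produces the algebra isomorphism $\overline{\A_\Gamma}\cong\A_{\Gb}$.
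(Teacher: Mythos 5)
Your proposal is correct and follows essentially the same route as the paper's proof: reduce to identifying the ideals via the quiver isomorphism of Proposition~\ref{EqualityQuivers}, use the block decomposition of the successor sequence from Lemma~\ref{lemma:val} to show $C_{i,\mu}^{m(\mu)}$ projects to $C_{\bar{i},\bar{\mu}}^{\om(\bar{\mu})}$ (via $m(\mu)(s+1)=\om(\bar{\mu})$), and invoke Proposition~\ref{prop:bar-q} to match the quantizing coefficients in the type one relations. Your explicit treatment of types two and three and the remark about loop bookkeeping are slightly more detailed than the paper, which simply notes that a similar argument handles the monomial relations.
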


\begin{proof}
From the discussion above and Proposition~\ref{EqualityQuivers}, it suffices to show that $\barIg \cong I_{\Gb}$.
Given our choice of $\E_1$ and $\E_2$ as in Lemma~\ref{lemma:pickE} we see that $\E_1$ and $\E_2$ induce set functions $\overline{\E_1}$ and $\overline{\E_2}$ for $\Gb$.

First we consider a relation $x$ of type one in $\Ig$. For this, suppose $i$ is an edge in $\Gamma_1$ which is not truncated at either of its endpoints and let $\E_1(i) = \mu$ and $\E_2(i) = \nu$. Let $i = i_1, i_2, \ldots , i_{\val(\mu)}$ (respectively $i = j_1, j_2, \ldots , j_{\val(\nu)}$) be the successor sequence for $i$ at vertex $\mu$
(respectively $i$ at vertex $\nu$) with associated cycle $C_{i,\mu} = a_1a_2\cdots a_{\val(\mu)}$
(respectively $C_{i,\nu} = b_1b_2\cdots b_{\val(\nu)}$) in $\Qg$. Thus
$x = C_{i,\mu}^{m(\mu)}-\frac{q_{i,\nu}}{q_{i,\mu}}C_{i,\nu}^{m(\nu)}$.
By Lemma~\ref{lemma:val}, there exist $1 \leq k \leq \val(\mu)$, $1 \leq l \leq \val(\nu)$, $g, h \in G$ and $s, t \geq 0$ such that the successor sequence for $i$ at $\mu$ is
$$i_1, i_2 , \ldots , i_k, i_1^g, i_2^g, \ldots , i_k^g, \ldots , i_1^{g^s}, i_2^{g^s}, \ldots , i_k^{g^s}$$
and the successor sequence for $i$ at $\nu$ is
$$j_1, j_2 , \ldots , j_l, j_1^h, j_2^h, \ldots , j_l^h, \ldots , j_1^{h^t}, j_2^{h^t}, \ldots , j_l^{h^t}.$$
It now follows that $\bar{x} \in \barIg$ is given by
$$\bar{x} = (\bar{a}_1\bar{a}_2\cdots\bar{a}_k)^{m(\mu)(s+1)} - \frac{q_{i,\nu}}{q_{i,\mu}}(\bar{b}_1\bar{b}_2\cdots\bar{b}_l)^{m(\nu)(t+1)}.$$
From Lemma~\ref{lemma:val} we have $\val(\mu) = \val(\bar{\mu})(s+1)$ and $\val(\nu) = \val(\bar{\nu})(t+1)$, and, from Proposition~\ref{prop:consequences}(1), we have
$\om(\bar{\mu})\val(\bar{\mu}) = m(\mu)\val(\mu)$. Thus $m(\mu)(s+1) = \om(\bar{\mu})$ and $m(\nu)(t+1) = \om(\bar{\nu})$. Finally by Proposition~\ref{prop:bar-q},
$$\bar{x} = (\bar{a}_1\bar{a}_2\cdots\bar{a}_k)^{\om(\bar{\mu})} - \frac{\oq_{\bar{i},\bar{\nu}}}{\oq_{\bar{i},\bar{\mu}}}(\bar{b}_1\bar{b}_2\cdots\bar{b}_l)^{\om(\bar{\nu})}$$
which is the relation of type one in $K\Qgb$ corresponding to $\bar{i}$ under the isomorphism of
Proposition~\ref{EqualityQuivers}.

The remaining relations of types two and three are monomial and a similar argument holds for these cases.
\end{proof}

\section{Weightings on Brauer graphs}\label{section:weight}

Throughout this section $(\Delta, \mo, m)$ will denote a Brauer graph and $G$ will continue to be a finite abelian group. For each $\mu \in \Delta_0$, we define $\Z_\mu$ to be the set $\Z_\mu = \{(i,j) \mid i, j \in \Delta_1, j \mbox{ is the successor of $i$ at vertex $\mu$}\}.$
Let $\Z_\Delta$ be the disjoint union
$$\Z_\Delta = \bigcup^{\bullet}_{\mu \in \Delta_0}\Z_\mu.$$

\begin{dfn}
A set function $W\colon \Z_\Delta \to G$ is called a {\em successor weighting} of the Brauer graph $(\Delta, \mo, m)$. For $\mu \in \Delta_0$ we define the {\em order of $\mu$}, denoted $\ord(\mu)$, to be the order in $G$ of the element $$\omega_\mu = \prod_{(i,j) \in \Z_\mu} W(i,j).$$
\end{dfn}

Let $W\colon \Z_\Delta \to G$ be a successor weighting of $(\Delta, \mo, m)$. The aim of this section is to construct a new Brauer graph $(\Delta_W, \mo_W, m_W)$ and a free Brauer action of $G$ on $(\Delta_W, \mo_W, m_W)$ such that the Brauer orbit graph $(\overline{\Delta_W}, \overline{\mo_W}, \overline{m_W})$ associated to $W$ is isomorphic to $(\Delta, \mo, m)$. We begin with the construction of $\Delta_W$.

Suppose $\mu$ is a vertex in $\Delta$. Let $H_\mu$ denote the subgroup of $G$ generated by the element $\omega_\mu$, and label the cosets of $H_\mu$ in $G$ as $H_{\mu, 1}, H_{\mu, 2}, \ldots , H_{\mu, |G|/\ord(\mu)}$. We define an equivalence relation on the set $\{(i, H_{\mu, s}) \mid i \in \Delta_1 \mbox{ is incident with } \mu, 1 \leq s \leq |G|/\ord(\mu)\}$ to be the equivalence relation generated by
$$(i, H_{\mu, s}) \sim (j, H_{\mu, t}) \mbox{ if $j$ is the successor of $i$ at vertex $\mu$ and
$H_{\mu, t} = H_{\mu, s}W(i,j)$.}$$
Let $\D_\mu$ be the set of equivalence classes under this equivalence relation; we denote the equivalence class of
$(i, H_{\mu, s})$ by $[i, H_{\mu, s}]$.

We recall that if an edge $i$ is a loop at the vertex $\mu$, then,
by the remarks in Section~\ref{section:notation}, $i$ is viewed as
two edges, say $i$ and $\hat i$, having different successors.  It
can easily occur that $(i, H_{\mu, s})$ is not equivalent to $(\hat
i, H_{\mu, s})$, for all $s$.

Suppose
$i \in \Delta_1$ is incident with vertex $\mu$ in $\Delta$ and let
$i = i_1, i_2, \ldots , i_{\val(\mu)}$ be the successor sequence of
$i$ at vertex $\mu$ with our usual convention that $i_{\val(\mu)+1}
= i_1$. Define $\omega(i_1, i_r) = W(i_1, i_2)W(i_2, i_3) \cdots
W(i_{r-1}, i_r)$ for $1 \leq r \leq \val(\mu)+1$. Then $\omega_\mu =
\omega(i_1, i_{\val(\mu)+1})$.

\begin{lemma}\label{lemma:omega}
Let $W\colon \Z_\Delta \to G$ be a successor weighting of $(\Delta, \mo, m)$, $i \in \Delta_1$ be incident with $\mu \in \Delta_0$ and $i = i_1, i_2, \ldots , i_{\val(\mu)}$ be the successor sequence of $i$ at $\mu$. Let $H_{\mu,s}$ be a coset of $H_\mu$ in $G$ and $j$ an edge in $\Delta$. Then $[i, H_{\mu, s}] = [j, H_{\nu, t}]$ if and only if $\nu = \mu$, $j = i_r$, and $H_{\mu, t} = H_{\mu, s}\omega(i_1, i_r)\omega_\mu^\theta$ for some $1 \leq r \leq \val(\mu)$ and $0 \leq \theta < \ord(\mu)$.
\end{lemma}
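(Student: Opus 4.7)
The plan is to argue directly from the definition of the equivalence relation generating the sets $\D_\mu$. My first step is to reduce to the case $\mu=\nu$: since the relation is defined only on pairs $(i,H_{\mu,s})$ with $i$ incident to the single fixed vertex $\mu$, the sets $\D_\mu$ and $\D_\nu$ are disjoint when $\mu\neq\nu$, so the equality $[i,H_{\mu,s}]=[j,H_{\nu,t}]$ immediately forces $\nu=\mu$. From now on I assume $\nu=\mu$.

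For the ``if'' direction, suppose $j=i_r$ and $H_{\mu,t}=H_{\mu,s}\omega(i_1,i_r)\omega_\mu^\theta$. Since $\omega_\mu\in H_\mu$ we have $H_{\mu,t}=H_{\mu,s}\omega(i_1,i_r)$ as cosets, so it suffices to show $(i_1,H_{\mu,s})\sim(i_r,H_{\mu,s}\omega(i_1,i_r))$. Using that $i_{r'+1}$ is the successor of $i_{r'}$ at $\mu$, repeated application of the generating relation yields the chain
$$
(i_1,H_{\mu,s})\sim(i_2,H_{\mu,s}W(i_1,i_2))\sim\cdots\sim(i_r,H_{\mu,s}\omega(i_1,i_r)),
$$
giving the desired equivalence.

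For the ``only if'' direction, I exhibit the equivalence class of $(i_1,H_{\mu,s})$ explicitly as the set $E=\{(i_r,H_{\mu,s}\omega(i_1,i_r))\mid 1\le r\le\val(\mu)\}$. A forward generating step sends $(i_r,H_{\mu,s}\omega(i_1,i_r))$ to $(i_{r+1},H_{\mu,s}\omega(i_1,i_{r+1}))$ for $r<\val(\mu)$, and to $(i_1,H_{\mu,s}\omega_\mu)=(i_1,H_{\mu,s})$ when $r=\val(\mu)$, where the wrap-around is genuine equality of pairs because $\omega_\mu\in H_\mu$ fixes the coset; a symmetric calculation handles the backward step. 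Hence $E$ is closed under the generating relation and contains $(i_1,H_{\mu,s})$, so $E$ is the full equivalence class, forcing every representative of $[i_1,H_{\mu,s}]$ into the stated form. The lemma is essentially a bookkeeping statement about walking around the cyclic successor sequence at $\mu$; the only delicate point is that a loop $i$ at $\mu$ is treated as two distinct edges $i$ and $\hat i$ with different successors, but since the generating relation treats these as separate elements of $\Delta_1$ the argument is unaffected, and the factor $\omega_\mu^\theta$ in the statement is redundant at the level of cosets.
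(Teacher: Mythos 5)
Your proof is correct and follows essentially the same route as the paper: reduce to $\nu=\mu$ from the definition of the relation, then walk along the successor sequence applying the generating relation, using $H_{\mu,s}\omega_\mu^\theta=H_{\mu,s}$ to absorb the redundant factor. The one genuine difference is in the ``only if'' direction: the paper merely shows that $(i_1,H_{\mu,s})\sim(i_r,H_{\mu,s}\omega(i_1,i_r))$ and then asserts the form of $H_{\mu,t}$, tacitly assuming each edge $i_r$ occurs with a unique coset in the class, whereas you exhibit the class explicitly as $E=\{(i_r,H_{\mu,s}\omega(i_1,i_r))\mid 1\le r\le\val(\mu)\}$ and verify closure under forward and backward generating steps (including the wrap-around at $r=\val(\mu)$). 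Your version is the more airtight of the two, and your remarks on loops being treated as two distinct edges and on the redundancy of $\omega_\mu^\theta$ are both accurate.
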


\begin{proof}
Suppose first that $[i, H_{\mu, s}] = [j, H_{\nu, t}]$. By the definition of the equivalence relation, $\mu = \nu$, so that $j$ is incident with $\mu$ in $\Delta$. Hence $j = i_r$ for some $1 \leq r \leq \val(\mu)$. Now $(i_1, H_{\mu,s})$ is equivalent to $(i_2, H_{\mu,s}W(i_1, i_2))$, so inductively $(i_1, H_{\mu,s})$ is equivalent to $(i_r, H_{\mu,s}W(i_1, i_2)\cdots W(i_{r-1}, i_r)) = (i_r, H_{\mu,s}\omega(i_1, i_r))$. However $H_\mu = \langle\omega_\mu\rangle$ so $H_{\mu, s} = H_{\mu, s}\omega_\mu^\theta$ for all $0 \leq \theta < \ord(\mu)$. Thus
$H_{\mu, t} = H_{\mu, s}\omega(i_1, i_r)\omega_\mu^\theta$ for some $0 \leq \theta < \ord(\mu)$.

Conversely, suppose that $j=i_r$ and $H_{\mu, t} = H_{\mu, s}\omega(i_1, i_r)\omega_\mu^\theta$ for some $1 \leq r \leq \val(\mu), 0 \leq \theta < \ord(\mu)$. A similar argument to that given above shows that $(i, H_{\mu, s})$ is equivalent to $(i_r, H_{\mu, t})$ and hence $[i_1, H_{\mu, s}] = [i_r, H_{\mu, t}]$.
\end{proof}

\begin{dfn}\label{dfn:graphGammaw}
We define $\Delta_W$ to be the graph with vertex set $\{\mu_d \mid \mu \in \Delta_0, d \in \D_\mu$\} and edge set $\{i_g \mid g \in G, i \in \Delta_1\}$. If $i$ is an edge in $\Delta_1$ with endpoints $\mu$
and $\nu$ in $\Delta_0$ then the edge $i_g$ in $(\Delta_W)_1$ has endpoints
$\mu_{[i, H_{\mu, s}]}$ and $\nu_{[i, H_{\nu, t}]}$ where $H_{\mu, s}$ (respectively $H_{\nu, t}$)
is the unique coset of $H_\mu$ in $G$ (respectively $H_\nu$ in $G$) containing $g$.
\end{dfn}

If $i$ is a loop in $\Delta$ incident with vertex $\mu$, then consider $i$ as two edges $i$ and $\hat i$.
Let $g\in G$. The edge $i_g$ has endpoints  $\mu_{[i,H_{\mu,s}]}$ and
$\mu_{[\hat i,H_{\mu,s}]}$, and the edge $\hat i_g$ has endpoints  $\mu_{[\hat i,H_{\mu,s}]}$ and
$\mu_{[i,H_{\mu,s}]}$, where $H_{\mu,s}$ is the unique coset of
$H_{\mu}$ in $G$ containing $g$.

Suppose that $j$ is the successor
of $i$ at the vertex $\mu$ in $\Delta$, let $g \in G$, and let
$H_{\mu, s}$ be the coset of $H_\mu$ in $G$ containing $g$. Then
both $i_g$ and $j_{gW(i,j)}$ are incident with vertex $\mu_{[i,
H_{\mu, s}]}$ in $\Delta_W$.

\begin{proposition}\label{prop:valGammaw}
Let $W \colon \Z_\Delta \to G$ be a successor weighting of $(\Delta, \mo, m)$. For each $\mu \in \Delta_0$ and $d \in \D_\mu$ we have $\val(\mu_d) = \ord(\mu)\val(\mu)$.
\end{proposition}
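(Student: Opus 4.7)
The plan is to fix a representative $d=[i_1,H_{\mu,s_1}]$ of the equivalence class and parametrize the edges of $\Delta_W$ incident with $\mu_d$ by pairs $(r,g)$, where $r$ ranges over positions in the successor sequence of $i_1$ at $\mu$ and $g$ ranges over a coset of $H_\mu$ determined by $r$. Then a straightforward count gives $\val(\mu)\cdot\ord(\mu)$.

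In more detail, let $i_1,i_2,\ldots,i_{\val(\mu)}$ be the successor sequence of $i_1$ at $\mu$, with loops appearing twice as per the convention in Section~\ref{section:notation}. Applying Lemma~\ref{lemma:omega}, the equivalence class $d$ is exactly
\[
d=\{(i_r,H_{\mu,s_1}\omega(i_1,i_r))\mid 1\le r\le \val(\mu)\},
\]
since $\omega_\mu\in H_\mu$ implies that multiplying the coset by $\omega_\mu^\theta$ leaves it unchanged. Hence for each $r$ there is a unique coset $H_{\mu,t(r)}=H_{\mu,s_1}\omega(i_1,i_r)$ of $H_\mu$ with $(i_r,H_{\mu,t(r)})\in d$.

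Now, by Definition~\ref{dfn:graphGammaw}, an edge $(i_r)_g$ of $\Delta_W$ has an endpoint at $\mu_d$ if and only if the coset of $H_\mu$ containing $g$ equals $H_{\mu,t(r)}$, that is, $g\in H_{\mu,s_1}\omega(i_1,i_r)$. This coset has size $|H_\mu|=\ord(\mu)$, so each position $r$ contributes exactly $\ord(\mu)$ edges of $\Delta_W$ meeting $\mu_d$ via $i_r$. Summing over the $\val(\mu)$ positions yields $\val(\mu)\cdot\ord(\mu)$ in total.

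The only subtlety I expect is the treatment of loops, and this is where I would be most careful. When $i$ is a loop at $\mu$, the two copies $i,\hat{i}$ both appear in the successor sequence and, for $g\in G$, the edges $i_g$ and $\hat i_g$ both have endpoints $\mu_{[i,H_{\mu,s}]}$ and $\mu_{[\hat i,H_{\mu,s}]}$; if these two vertices coincide, $i_g$ and $\hat i_g$ constitute a loop at $\mu_d$ counted twice. The counting above, which treats the two positions $r$ corresponding to $i$ and $\hat i$ separately, therefore automatically matches the valency convention that counts each loop as two edges. Once this is checked, the identity $\val(\mu_d)=\val(\mu)\cdot\ord(\mu)$ follows immediately, and in particular it is independent of the chosen class $d\in\D_\mu$.
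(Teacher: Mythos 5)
Your proof is correct and follows essentially the same route as the paper's: both use Lemma~\ref{lemma:omega} to identify the class $d$ with the set of pairs $(i_r, H_{\mu,s}\omega(i_1,i_r))$ and then count $\val(\mu)$ positions times the $\ord(\mu)$ elements of the relevant coset. Your extra remark on loops is a welcome refinement of a point the paper's proof leaves implicit, but it does not change the argument.
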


\begin{proof}
Let $\mu \in \Delta_0$ and $d = [i, H_{\mu, s}] \in \D_\mu$. Suppose
that $h\in G$ and $j_h$ is incident with $\mu_d$ in $\Delta_W$.
Since one endpoint of $j_h$ is $\mu_d$ it follows from Definition~\ref{dfn:graphGammaw}
that $\mu_{[i, H_{\mu, s}]}=\mu_{[j, H_{\mu, t}]}$ where
$h\in H_{\mu,t}$. Thus $[i, H_{\mu, s}]={[j, H_{\mu, t}]}$ and, by
Lemma~\ref{lemma:omega}, $j$ is in the successor sequence
$i=i_1,i_2,\dots,i_{\val(\mu)}$ of $i$ at vertex $\mu$ in $\Delta$.
For $g'\in G$ we have $hg'\in H_{\mu,t}$ if and only if
$g'=\omega_{\mu}^{\theta}$ for some $0\le \theta<\ord(\mu)$.  Thus
there are $\val(\mu)$ choices for $j$ and $\ord(\mu)$ choices for
$h$.  Hence, $\val(\mu_d)=\ord(\mu)\val(\mu)$.
\end{proof}

In order to construct a multiplicity function $m_W$
for $\Delta_W$, we need to place an additional condition on our
successor weightings.

\begin{dfn}\label{dfn:multGammaw} \begin{enumerate}
\item A successor weighting $W\colon \Z_{\Delta}\to G$ of the Brauer graph $(\Delta, \mo, m)$ is called a
{\em Brauer weighting} if $\ord(\mu)\mid m(\mu)$ for all
$\mu\in\Delta_0$.
\item If $W\colon \Z_{\Delta}\to G$ is a Brauer weighting, we define
the function $m_W$ by
$$m_W\colon (\Delta_W)_0\to \mathbb N\setminus\{0\}, \quad \mu_d \mapsto m(\mu)/\ord(\mu).$$
\end{enumerate}
\end{dfn}

We remark that $\ord(\mu)\mid m(\mu)$ if and only if
$\omega_{\mu}^{m(\mu)}=\id_G$.

\begin{proposition}\label{prop:succ_seq}
Let $W\colon \Z_{\Delta}\to G$ be a Brauer
weighting of $(\Delta,\mo,m)$.  Suppose that $j$ is
the successor of $i$ at the vertex $\mu$ in $\Delta$,  let $g \in
G$, and let $H_{\mu, s}$ be the coset of $H_\mu$ in $G$ containing
$g$. Defining $j_{gW(i,j)}$ to be the successor of $i_g$ at vertex
$\mu_{[i, H_{\mu, s}]}$ induces a cyclic ordering $\mo_W$ so that
$(\Delta_W, \mo_W, m_W)$ is a Brauer graph.
\end{proposition}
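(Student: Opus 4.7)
The plan is to verify the three pieces of data required of a Brauer graph on $\Delta_W$: the well-definedness of $m_W$ as a function into $\mathbb{N}\setminus\{0\}$, and the fact that the proposed successor rule really produces a cyclic ordering of the edges at each vertex. The first is immediate from Definition~\ref{dfn:multGammaw}: since $W$ is a Brauer weighting, $\ord(\mu)\mid m(\mu)$, so $m_W(\mu_d)=m(\mu)/\ord(\mu)$ is a positive integer and depends only on $\mu$, not on $d$.

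For the cyclic ordering, I would fix a vertex $\mu_d$ of $\Delta_W$ with $d=[i,H_{\mu,s}]\in\D_\mu$, pick some $g\in H_{\mu,s}$ so that $i_g$ is incident with $\mu_d$, and iterate the proposed successor rule starting from $i_g$. Writing $i=i_1,i_2,\dots,i_{\val(\mu)}$ for the successor sequence of $i$ at $\mu$ in $\Delta$, a single step sends $(i_r)_{g\omega(i_1,i_r)}$ to $(i_{r+1})_{g\omega(i_1,i_{r+1})}$, producing the sequence
$$(i_1)_g,\ (i_2)_{g\omega(i_1,i_2)},\ \ldots,\ (i_r)_{g\omega(i_1,i_r)},\ \ldots.$$
By Lemma~\ref{lemma:omega} (taking $\theta=0$), each of these edges is indeed incident with $\mu_d$, since $[i_r,H_{\mu,s}\omega(i_1,i_r)]=[i,H_{\mu,s}]=d$.

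After $\val(\mu)$ steps the walk reaches $(i_1)_{g\omega_\mu}$, and after $\ell\,\val(\mu)$ steps it reaches $(i_1)_{g\omega_\mu^\ell}$. Because $\omega_\mu$ has order exactly $\ord(\mu)$ in $G$, the first return to $(i_1)_g$ happens after exactly $\ord(\mu)\val(\mu)$ steps, and not before; by Proposition~\ref{prop:valGammaw} this length equals $\val(\mu_d)$. The intermediate edges are of the form $(i_r)_{g\omega(i_1,i_r)\omega_\mu^\theta}$ with $1\le r\le\val(\mu)$ and $0\le\theta<\ord(\mu)$, and these $\ord(\mu)\val(\mu)$ labelled edges are pairwise distinct, hence exhaust the edges at $\mu_d$. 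So the proposed successor rule defines a single cyclic permutation of those edges, which is the desired ordering $\mo_W$ at $\mu_d$.

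The delicate point to keep an eye on is the loop case: if $i$ is a loop at $\mu$, then $i$ and $\hat i$ appear as separate entries in the successor sequence, and the two classes $[i,H_{\mu,s}]$ and $[\hat i,H_{\mu,s}]$ may well be different vertices of $\Delta_W$, so the two ends $\mu_{[i,H_{\mu,s}]}$ and $\mu_{[\hat i,H_{\mu,s}]}$ of $i_g$ need not coincide. The bookkeeping still goes through because Lemma~\ref{lemma:omega} and Proposition~\ref{prop:valGammaw} were already established with $i$ and $\hat i$ treated as distinct elements of $\Delta_1$, and the products $\omega(i_1,i_r)$ are computed with that same convention. Once the cyclic ordering has been produced at every vertex, no further verification is needed for $(\Delta_W,\mo_W,m_W)$ to be a Brauer graph.
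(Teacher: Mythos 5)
Your proof is correct and follows essentially the same route as the paper: both write down the explicit orbit of $i_g$ under the proposed successor rule, namely $(i_r)_{g\omega(i_1,i_r)\omega_\mu^\theta}$ for $1\le r\le\val(\mu)$ and $0\le\theta<\ord(\mu)$, and invoke Lemma~\ref{lemma:omega} and Proposition~\ref{prop:valGammaw} to see that this list of $\ord(\mu)\val(\mu)=\val(\mu_d)$ distinct edges is exactly the set of edges at $\mu_d$, so the rule is a single cycle. Your version supplies more of the justification (the distinctness count, the first-return argument, and the loop caveat) than the paper, which simply displays the sequence and asserts the conclusion.
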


\begin{proof}
Let $i=i_1,j=i_2,i_3,\dots, i_{\val(\mu)}$ be the successor sequence
of $i$ at the vertex $\mu$ in $\Delta$.  From
Lemma~\ref{lemma:omega} and Proposition~\ref{prop:valGammaw}, the
successor sequence of $i_g$ at vertex $\mu_{[i,H_{\mu,s}]}$ is
\[
\begin{array}{l}
(i_1)_g, (i_2)_{g\omega(i_1,i_2)}, (i_3)_{g\omega(i_1,i_3)}, \dots, (i_{\val(\mu)})_{g\omega(i_1,i_{\val(\mu)})},\\
\quad (i_1)_{g\omega_{\mu}}, (i_2)_{g\omega(i_1,i_2)\omega_{\mu}}, \dots, (i_{\val(\mu)})_{g\omega(i_1,i_{\val(\mu)})\omega_{\mu}},
\dots,\\
\quad (i_1)_{g\omega_{\mu}^{\ord(\mu)-1}}, (i_2)_{g\omega(i_1,i_2)\omega_{\mu}^{\ord(\mu)-1}}, \dots, (i_{\val(\mu)})_{g\omega(i_1,i_{\val(\mu)})\omega_{\mu}^{\ord(\mu)-1}}.
\end{array}
\]
This explicitly describes the cyclic ordering $\mo_W$.
\end{proof}

\begin{lemma}
There is a canonical group action of $G$ on $\Delta_W$ given by
$$(\mu_{[i, H_{\mu, s}]})^h = \mu_{[i, H_{\mu, s}h]} \mbox{ and } (i_g)^h = i_{gh}$$
for all $h \in G$.
\end{lemma}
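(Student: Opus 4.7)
The plan is to verify, in order: (i) the vertex action is independent of the chosen representative $(i,H_{\mu,s})$ of the equivalence class in $\D_\mu$, (ii) the edge action is compatible with the endpoint assignment from Definition~\ref{dfn:graphGammaw}, and (iii) the group-action axioms $(x)^{\id_G}=x$ and $((x)^h)^{h'}=(x)^{hh'}$ hold. Step (iii) reduces to the fact that multiplying a coset of $H_{\mu}$ by $\id_G$ returns the same coset, together with associativity of multiplication in $G$, so it is routine.

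For step (i), which is the heart of the argument, I would apply Lemma~\ref{lemma:omega}. Suppose $[i,H_{\mu,s}]=[j,H_{\nu,t}]$. The lemma forces $\nu=\mu$, $j=i_r$ for some $r$ in the successor sequence of $i$ at $\mu$, and $H_{\mu,t}=H_{\mu,s}\,\omega(i_1,i_r)\,\omega_\mu^{\theta}$ for some $0\le\theta<\ord(\mu)$. Right-multiplying this equation by $h$ and using that $G$ is abelian, I can rewrite it as $H_{\mu,t}h=H_{\mu,s}h\,\omega(i_1,i_r)\,\omega_\mu^{\theta}$. The converse direction of Lemma~\ref{lemma:omega} then immediately yields $[i,H_{\mu,s}h]=[j,H_{\mu,t}h]$, which shows that $(\mu_{[i,H_{\mu,s}]})^h$ is well-defined. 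This is the only place where abelianness of $G$ is genuinely needed.

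For step (ii), recall that the endpoints of $i_g$ are $\mu_{[i,H_{\mu,s}]}$ and $\nu_{[i,H_{\nu,t}]}$, where $g$ lies in both $H_{\mu,s}$ and $H_{\nu,t}$. Since then $gh\in H_{\mu,s}h$ and $gh\in H_{\nu,t}h$, Definition~\ref{dfn:graphGammaw} assigns to $i_{gh}$ the endpoints $\mu_{[i,H_{\mu,s}h]}$ and $\nu_{[i,H_{\nu,t}h]}$, which match $(\mu_{[i,H_{\mu,s}]})^h$ and $(\nu_{[i,H_{\nu,t}]})^h$ by the formula established in step (i). The same check carries through for each of the two occurrences $i,\hat i$ when $i$ is a loop, since each occurrence has its own endpoint data in $\D_\mu$.

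The only genuine obstacle is the well-definedness in step (i); once Lemma~\ref{lemma:omega} is combined with abelianness of $G$, the remaining verifications are bookkeeping, and the action is canonical in the sense that it is forced on the nose by the two displayed formulas.
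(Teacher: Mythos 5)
Your proposal is correct and follows essentially the same route as the paper: the crux in both is that abelianness of $G$ lets one commute the right-multiplication by $h$ past the weighting elements, so the vertex action is well-defined on equivalence classes; the only cosmetic difference is that the paper verifies this on the generators of the equivalence relation ($H_{\mu,s}W(i,j)h = H_{\mu,s}h\cdot W(i,j)$) while you invoke the full characterization of Lemma~\ref{lemma:omega} in both directions, and you additionally write out the endpoint-compatibility check for $(i_g)^h$ that the paper leaves to the reader.
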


\begin{proof}
We prove first that the action of $G$ on a vertex of $\Delta_W$ is well-defined. Suppose
$[i, H_{\mu, s}] = [j, H_{\mu, t}]$ and that $j$ is the successor of $i$ at the vertex $\mu$.
By definition, $H_{\mu, s}W(i,j) = H_{\mu, t}$ so that $H_{\mu, s}W(i,j)h = H_{\mu, t}h$ for all $h \in G$.
Since $G$ is abelian it follows that $H_{\mu, s}h\cdot W(i,j) = H_{\mu, t}h$ and hence
$[i, H_{\mu, s}h] = [j, H_{\mu, t}h]$. Thus the action of $G$ respects the generators of the equivalence
relation and hence the action of $G$ on $(\Delta_W)_0$ is well-defined.

We leave it to the reader to show that if $i_g$ has endpoints $\mu_{[i, H_{\mu, s}]}$ and $\nu_{[i, H_{\nu, t}]}$ then $(i_g)^h$ has endpoints $(\mu_{[i, H_{\mu, s}]})^h$ and $(\nu_{[i, H_{\nu, t}]})^h$, where $h \in G$.
\end{proof}

\begin{dfn}
A map $\varphi\colon (\Gamma, \mo, m) \to (\Gamma', \mo',m')$
is an \emph{isomorphism of Brauer graphs} if $\varphi\colon \Gamma\to \Gamma'$ is a
graph isomorphism such that
\begin{enumerate}
\item if $j$ is the successor of $i$ at vertex $\mu$ in
$\Gamma$, then  $\varphi(j)$ is the successor of $\varphi(i)$ at vertex
$\varphi(\mu)$ in $\Gamma'$, and
\item $m(\mu)=m'(\varphi(\mu))$ for all $\mu \in \Gamma_0$.
\end{enumerate}
In this case, we say that $(\Gamma, \mo, m)$ and $(\Gamma', \mo', m')$ are
\emph{isomorphic as Brauer graphs}.
\end{dfn}

It is easy to see that if
$\varphi \colon (\Gamma, \mo, m) \to (\Gamma', \mo', m')$
is an isomorphism of Brauer graphs, then $\varphi$ induces an isomorphism of the
associated quivers $\Q_{\Gamma}$ and
$\Q_{\Gamma'}$, and hence induces a $K$-algebra isomorphism of the
path algebras $K\Q_{\Gamma}$ and $K\Q_{\Gamma'}$.

\begin{theorem}\label{thm:inducedfreeBraueraction}
Let $(\Delta, \mo, m)$ be a Brauer graph, $G$ a finite abelian group, $W \colon\Z_\Delta \to G$ a Brauer weighting
and $(\Delta_W, \mo_W, m_W)$ the Brauer graph associated to $W$. Then the canonical action of $G$ on $(\Delta_W, \mo_W, m_W)$ is a free Brauer action. Moreover, if $(\overline{\Delta_W}, \overline{\mo_W}, \overline{m_W})$ is the Brauer orbit graph under this action, then $(\Delta, \mo, m)$ and $(\overline{\Delta_W}, \overline{\mo_W}, \overline{m_W})$
 are isomorphic as Brauer graphs.
\end{theorem}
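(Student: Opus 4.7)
The plan is to verify the two assertions in order: first, the canonical $G$-action on $(\Delta_W,\mo_W,m_W)$ is a free Brauer action; second, the associated Brauer orbit graph is isomorphic to $(\Delta,\mo,m)$.

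For the free Brauer action, freeness on edges is immediate from $(i_g)^h=i_{gh}$, since $i_{gh}=i_g$ forces $h=\id_G$. Orientation preservation follows directly from Proposition~\ref{prop:succ_seq}: the defining rule ``$j_{gW(i,j)}$ is the successor of $i_g$ at $\mu_{[i,H_{\mu,s}]}$'' is patently equivariant in $g$ once we apply $h\in G$ and replace $g$ by $gh$, since the coset containing $gh$ is $H_{\mu,s}h$. Compatibility with $m_W$ is trivial because $m_W(\mu_d)=m(\mu)/\ord(\mu)$ depends only on the underlying vertex $\mu$ of $\Delta$.

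For the isomorphism, I define $\varphi\colon(\Delta,\mo,m)\to(\overline{\Delta_W},\overline{\mo_W},\overline{m_W})$ by $\varphi(\mu)=\overline{\mu_d}$ for any $d\in\D_\mu$ and $\varphi(i)=\overline{i_g}$ for any $g\in G$. Well-definedness on edges is clear from $(i_g)^h=i_{gh}$. For vertices, I need the $G$-orbit of $\mu_d$ to be the whole set $\{\mu_{d'}:d'\in\D_\mu\}$. I plan to establish this by computing $|\D_\mu|=|G|/\ord(\mu)$ (each equivalence class contains exactly $\val(\mu)$ pairs by Lemma~\ref{lemma:omega}, out of a total of $\val(\mu)\cdot|G|/\ord(\mu)$ pairs) and, via Lemma~\ref{lemma:omega} applied with $j=i$, observing that the stabilizer of $\mu_d$ is $H_\mu$, so the orbit has size $|G|/\ord(\mu)=|\D_\mu|$. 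Bijectivity and preservation of incidence for $\varphi$ then read off Definition~\ref{dfn:graphGammaw}.

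It remains to check that $\varphi$ preserves cyclic ordering and multiplicities. The first is direct: if $j$ is the successor of $i$ at $\mu$ in $\Delta$, then $j_{gW(i,j)}$ is the successor of $i_g$ at $\mu_{[i,H_{\mu,s}]}$ in $\Delta_W$ by Proposition~\ref{prop:succ_seq}, and passing to orbits via the construction of $\overline{\mo_W}$ in Proposition~\ref{prop:mult} gives that $\varphi(j)=\overline{j_g}$ is the successor of $\varphi(i)=\overline{i_g}$ at $\varphi(\mu)$. For the multiplicity, Proposition~\ref{prop:mult} combined with $\val(\mu_d)=\ord(\mu)\val(\mu)$ from Proposition~\ref{prop:valGammaw} and $m_W(\mu_d)=m(\mu)/\ord(\mu)$ reduces $\overline{m_W}(\varphi(\mu))=m(\mu)$ to the identity $\val(\overline{\mu_d})=\val(\mu)$. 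This last step is the main bookkeeping obstacle, and I plan to settle it by inspecting the explicit successor sequence at $\mu_d$ given in Proposition~\ref{prop:succ_seq}: the first $\val(\mu)$ entries are the $G$-translates $(i_r)_{g\omega(i_1,i_r)}$ corresponding to the original successor sequence, and the $(\val(\mu)+1)$-th entry is $(i_1)_{g\omega_\mu}=((i_1)_g)^{\omega_\mu}$, so the integer $k$ of Lemma~\ref{lemma:val} equals $\val(\mu)$ and therefore $\val(\overline{\mu_d})=\val(\mu)$, which yields $\overline{m_W}(\varphi(\mu))=m(\mu)$ and completes the proof.
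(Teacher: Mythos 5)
Your proposal is correct and follows essentially the same route as the paper's proof: verify the three axioms of a free Brauer action directly (freeness on edges from $(i_g)^h=i_{gh}$, orientation preservation from commutativity of $G$, constancy of $m_W$ on orbits), identify the vertex and edge orbits of $\Delta_W$ with $\Delta_0$ and $\Delta_1$, and reduce the multiplicity check to the identity $\val(\overline{\mu_d})=\val(\mu)$, which both you and the paper extract from the explicit successor sequence of Proposition~\ref{prop:succ_seq}. The only (harmless) deviation is that you establish that the $G$-orbit of $\mu_d$ is all of $\{\mu_{d'}\mid d'\in\D_\mu\}$ by an orbit--stabilizer count ($|\D_\mu|=|G|/\ord(\mu)$ and stabilizer $H_\mu$), whereas the paper exhibits the explicit element $\omega(i_1,i_r)\omega_\mu^\theta$ carrying $\mu_d$ to $\mu_{d'}$; both arguments rest on Lemma~\ref{lemma:omega}.
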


\begin{proof}
We first show that the action is a free Brauer action on $(\Delta_W, \mo_W, m_W)$. Let $i \in \Delta_1$ and $g, h \in G$. Since $(i_g)^h = i_{gh}$, the action of $G$ on $\Delta_W$ is free on the edge set $(\Delta_W)_1$. If $j$ is the successor of $i$ at the vertex $\mu$ in $\Delta$ and $g \in H_{\mu,s}$, then $j_{gW(i,j)}$ is the successor of
$i_g$ at the vertex $\mu_{[i,H_{\mu,s}]}$ in $\Delta_W$ and $j_{ghW(i,j)}$ is the successor of
$i_{gh}$ at the vertex $\mu_{[i,H_{\mu,s}h]}$. Since $G$ is abelian, $ghW(i,j) = gW(i,j)h$, and we see that
$(j_{gW(i,j)})^h$ is the successor of $(i_g)^h$. Now we show that $m_W(\mu_d) = m_W(\mu_d^g)$ for all $g \in G$, where $\mu_d \in (\Delta_W)_0$. By Definition~\ref{dfn:multGammaw}, $m_W(\mu_d) = m(\mu)/\ord(\mu)$. However, $\mu_d^g = \mu_{d'}$ for some $d'\in\Z_\mu$ since $H_{\mu,s}g = H_{\mu,t}$ for some $1 \leq t \leq |G|/\ord(\mu)$. Hence $m_W(\mu_d^g) = m(\mu)/\ord(\mu)$ and $m_W(\mu_d) = m_W(\mu_d^g)$. Thus the canonical action of G on
$(\Delta_W, \mo_W, m_W)$ is a free Brauer action.

We form the Brauer orbit graph $(\overline{\Delta_W}, \overline{\mo_W}, \overline{m_W})$ under this free Brauer action. The next step is to show that $\overline{\Delta_W} \cong \Delta$ as graphs. Suppose $\mu$ is a vertex in $\Delta$ and $d \in \D_\mu$. If $\mu_d = \nu_e$ for some $\nu \in \Delta_0$ and $e \in \D_\nu$ then, from
Lemma~\ref{lemma:omega}, we have that $\mu = \nu$, so the orbit of the vertex $\mu_d \in (\Delta_W)_0$ is contained in the set $\{\mu_{d'} \mid d' \in \D_\mu\}$. Now let $d' \in \D_\mu$ and suppose $d = [i, H_{\mu,s}], d' = [j, H_{\mu,t}]$ for some $i, j \in \Delta_1$ incident with $\mu$. Let $i = i_1, i_2, \ldots , i_{\val(\mu)}$ be the successor sequence of $i$ at $\mu$. By Lemma~\ref{lemma:omega}, $j = i_r$ and $H_{\mu, t} = H_{\mu,s}\omega(i_1,i_r)\omega_\mu^\theta$ for some $1 \leq r \leq \val(\mu)$, $0 \leq \theta < \ord(\mu)$. Thus $(\mu_d)^{\omega(i_1,i_r)\omega_\mu^\theta} = (\mu_{[i_1,H_{\mu,s}]})^{\omega(i_1,i_r)\omega_\mu^\theta} =
\mu_{[i_1,H_{\mu,s}\omega(i_1,i_r)\omega_\mu^\theta]} = \mu_{[i_r,H_{\mu,t}]} = \mu_{d'}$. Hence $\mu_{d'}$ is in the orbit of $\mu_d$, and so the orbit of $\mu_d$ is $\{\mu_{d'} \mid d' \in \D_\mu\}$. Thus the vertices
$\overline{\mu_d}$ in $(\overline{\Delta_W})_0$
are in one-to one correspondence with the vertices $\mu$ in $\Delta_0$.

Now, suppose that $i$ is an edge in $\Delta$ and $g \in G$. We show that the orbit of the edge $i_g$ in $(\Delta_W)_1$ is the set $\{i_h \mid h \in G\}$. Let $j$ be an edge in $\Delta$ and $h \in G$ such that $j_h$ is in the orbit of $i_g$. Then there is some $g' \in G$ with $j_h = (i_g)^{g'} = i_{gg'}$ so that $j=i$. Since $(i_g)^{g^{-1}h} = i_h$, it follows that the orbit of $i_g$ is precisely the set $\{i_h \mid h \in G\}$. Hence
the edges $\overline{i_g}$ in $(\overline{\Delta_W})_1$
are in one-to one correspondence with the edges $i$ in $\Delta_1$.
It is now straightforward to show that there is an isomorphism of graphs $\varphi\colon\overline{\Delta_W} \to \Delta$ given by $\varphi(\overline{\mu_d}) = \mu$ and $\varphi(\overline{i_g}) = i$.

\sloppy It is clear that the induced cyclic ordering $\overline{\mo_W}$ is the cyclic ordering $\mo$ under this isomorphism $\varphi$. Finally, from Propositions~\ref{prop:consequences}(1) and \ref{prop:valGammaw}, we have that $\overline{m_W}(\overline{\mu_d})\val(\overline{\mu_d}) = m_W(\mu_d)\val(\mu_d) = m_W(\mu_d)\ord(\mu)\val(\mu)$. From
Definition~\ref{dfn:multGammaw}, $m_W(\mu_d) = m(\mu)/\ord(\mu)$ so it follows that
$\overline{m_W}(\overline{\mu_d})\val(\overline{\mu_d}) = m(\mu)\val(\mu)$. It remains to show that $\val(\overline{\mu_d}) = \val(\mu)$, for then we have that $\overline{m_W}(\overline{\mu_d}) = m(\mu)$ and hence $\overline{m_W} = m\varphi$ as required. To see this, let $\mu \in \Delta_0$, $d = [i, H_{\mu,s}]$ and $g \in G$ where $i$ is incident with $\mu$, $g \in H_{\mu,s}$ and $1 \leq s \leq |G|/\ord{\mu}$.
If the element $(i_r)_h$ is in the orbit of $(i_{r'})_{h'}$, for some $h, h' \in G$, then $r = r'$, and so, from the successor sequence of $i_g$ at $\mu_d$ given in the proof of Proposition~\ref{prop:succ_seq}, we see that
$\val(\overline{\mu_d}) \geq \val(\mu)$. However, noting that
$(i_r)_{g\omega(i_i,i_r)\omega_\mu^\theta} = \left((i_r)_{g\omega(i_i,i_r)}\right)^{\omega_\mu^\theta}$
is in the orbit of $(i_r)_{g\omega(i_i,i_r)}$,
we conclude that $\val(\overline{\mu_d}) \leq \val(\mu)$ and the proof is complete.
\end{proof}

\begin{dfn}
Let $W \colon\Z_\Delta \to G$ be a Brauer weighting of $(\Delta, \mo, m)$ and let
$(\Delta_W, \mo_W, m_W)$ be the Brauer graph associated to $W$.
We call $(\Delta_W, \mo_W, m_W)$ the {\em Brauer covering graph} of $(\Delta, \mo, m)$ associated to $W$.
\end{dfn}

\begin{ex}
Let $G$ be the cyclic group $G = {\mathbb Z}_3 = \langle g \mid g^3 = \id\rangle$ and let $\Delta$ be the graph
$$\xymatrix{
& \bullet\ar@{-}[dl]_a\ar@{-}[dr]^b & \\
 \bullet\ar@{-}[rr]_c & & \bullet
 }$$
with weighting $W$ given by
$$W(a,b) = \id , W(b,a) = g, W(b,c) = \id , W(c,b) = \id , W(c,a) = \id , W(a,c) = {\id}.$$
Following the above construction, $\Delta_W$ is
$$\xymatrix{
& \bullet\ar@{-}[rr]^{c_g}\ar@{-}[dr]^{a_g} & & \bullet\ar@{-}[dl]_{b_g} &\\
\bullet\ar@{-}[rr]^{b_{\id}} & & \bullet\ar@{-}[rr]^{a_{g^2}} & & \bullet\\
& \bullet\ar@{-}[ur]^{a_{\id}}\ar@{-}[ul]_{c_{\id}} & & \bullet\ar@{-}[ur]^{c_{g^2}}\ar@{-}[ul]_{b_{g^2}} &
}$$
Moreover $\overline{\Delta_W} = \Delta$.
\end{ex}

In the next section we investigate coverings of Brauer graph algebras associated to a Brauer covering graph.

\section{Brauer graph algebras arising from weightings on Brauer graphs}\label{section:q}

Let $(\Delta, \mo, m)$ be a Brauer graph, $(\Delta, \mo, m, q)$ a quantization of $(\Delta, \mo, m)$, and $G$ a finite abelian group. Let $W \colon\Z_\Delta \to G$ be a Brauer weighting and $(\Delta_W, \mo_W, m_W)$ the Brauer graph associated to $W$. The quantizing function $q \colon \X_\Delta \to K \setminus \{0\}$ induces a quantizing function $q_W \colon \X_{\Delta_W} \to K \setminus \{0\}$ given by $q_W((i_g, \mu_d))= q_{i, \mu}$. It is clear that this map is well-defined since if $(i_g, \mu_d) \in \X_{\Delta_W}$ then $(i, \mu) \in \X_\Delta$.
We call $(\Delta_W, \mo_W, m_W, q_W)$ the {\em quantized Brauer covering graph} (associated to $W$).

Let $W \colon \Z_\Delta \to G$ be a Brauer weighting for $(\Delta, \mo, m, q)$ and let $(\Delta_W, \mo_W, m_W, q_W)$ be the quantized Brauer covering graph associated to $W$.
The weighting $W$ induces a weight function $W^* \colon ({\mathcal Q}_\Delta)_1 \to G$ of the arrows of ${\mathcal Q}_\Delta$ as follows. If $j$ is the successor of $i$ at the vertex $\mu$ in $\Delta$ and $v_i \stackrel{a}{\longrightarrow} v_j$ is the associated arrow in $({\mathcal Q}_\Delta)_1$, then we define $W^*(a) = W(i, j)$. For a path $p=a_1 a_2 \cdots a_\sigma$ in ${\mathcal Q}_\Delta$, we define $W^*(p) = W^*(a_1)W^*(a_2) \cdots W^*(a_\sigma)$. Furthermore, $W^*$ induces a $G$-grading on $K{\mathcal Q}_\Delta$ by $p = a_1 a_2 \cdots a_\sigma$ being homogeneous of degree $W^*(p)$.

Recall that if $i \in \Delta_1$ is incident with $\mu \in \Delta_0$ with successor sequence $i = i_1, i_2, \ldots , i_{\val(\mu)}$ then $C_{i,\mu} = a_1 a_2 \cdots a_{\val(\mu)}$ in ${\mathcal Q}_\Delta$ where the arrow $a_r$ corresponds to the edge $i_{r+1}$ being the successor of the edge $i_r$ and $i_{\val(\mu)+1} = i_1$.

\begin{lemma}\label{lemma:pathshomo} If $i \in \Delta_1$ is incident with $\mu \in \Delta_0$ then $W^*(C_{i,\mu}^{m(\mu)}) = \id_G$.
\end{lemma}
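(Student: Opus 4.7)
The plan is to unpack the definition of $W^*$ on the cycle $C_{i,\mu}$ and recognize the resulting product as a power of $\omega_\mu$, then invoke the Brauer weighting condition.

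First I would write out $C_{i,\mu} = a_1 a_2 \cdots a_{\val(\mu)}$ where $a_r$ is the arrow associated with $i_{r+1}$ being the successor of $i_r$ (and $i_{\val(\mu)+1} = i_1$), so by definition of $W^*$ on arrows,
\[
W^*(C_{i,\mu}) = W(i_1, i_2)\, W(i_2, i_3) \cdots W(i_{\val(\mu)-1}, i_{\val(\mu)})\, W(i_{\val(\mu)}, i_1).
\]
The successor sequence lists every edge incident with $\mu$ exactly once (with loops counted twice), so the pairs $(i_1,i_2), (i_2,i_3), \ldots, (i_{\val(\mu)}, i_1)$ are precisely the elements of $\Z_\mu$. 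Since $G$ is abelian, the order in which one multiplies these elements is immaterial, hence
\[
W^*(C_{i,\mu}) = \prod_{(k,\ell)\in\Z_\mu} W(k,\ell) = \omega_\mu.
\]

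It then follows that $W^*(C_{i,\mu}^{m(\mu)}) = \omega_\mu^{m(\mu)}$. The hypothesis that $W$ is a Brauer weighting (Definition~\ref{dfn:multGammaw}) gives $\ord(\mu)\mid m(\mu)$, which, as noted in the remark following that definition, is equivalent to $\omega_\mu^{m(\mu)} = \id_G$. This yields $W^*(C_{i,\mu}^{m(\mu)}) = \id_G$, as required.

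There is essentially no obstacle here; the only point that requires even minor care is the identification of the product along the cyclic successor sequence with $\omega_\mu$, which relies on the cyclic listing covering $\Z_\mu$ exactly once (with the loop convention from Section~\ref{section:notation}) and on the commutativity of $G$.
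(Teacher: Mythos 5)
Your proof is correct and follows essentially the same route as the paper's: compute $W^*(C_{i,\mu})$ as the product of the $W(i_r,i_{r+1})$ along the successor sequence, identify this with $\omega_\mu$, and conclude from the Brauer weighting condition that $\omega_\mu^{m(\mu)}=\id_G$. Your additional remarks justifying the identification with $\omega_\mu$ (the successor pairs exhaust $\Z_\mu$ and $G$ is abelian) are details the paper leaves implicit, but the argument is the same.
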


\begin{proof}
\sloppy We have $W^*(C_{i,\mu})= W(i_1,i_2) W(i_2,i_3) \cdots W(i_{\val(\mu)-1}, i_{\val(\mu)}) W(i_{\val(\mu)}, i_1) = \omega_\mu$ and $\omega_\mu^{m(\mu)} = \id_G$.
\end{proof}

\begin{corollary}
The ideal $I_\Delta$ can be generated by elements of $K{\mathcal Q}_\Delta$ which are homogeneous in the $G$-grading induced by $W^*$.
\end{corollary}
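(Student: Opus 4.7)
The plan is to verify that each of the three families of generators of $I_\Delta$ listed in Section~\ref{section:notation} is already homogeneous with respect to the grading induced by $W^*$, so that the standard generating set already does the job; no replacement or combination is needed.

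First I would handle the monomial relations. A type three relation is a single path $ab$ in $K\Q_\Delta$, hence trivially homogeneous of degree $W^*(a)W^*(b)$. A type two relation has the form $C_{i,\nu}^{m(\nu)} b_1$, which is again a single monomial, so homogeneous of degree $W^*(C_{i,\nu}^{m(\nu)})\,W^*(b_1)$; in fact, by Lemma~\ref{lemma:pathshomo}, its degree is just $W^*(b_1)$.

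The only case requiring any content is type one. Here the generator is
\[
q_{i,\mu}\,C_{i,\mu}^{m(\mu)} \,-\, q_{i,\nu}\,C_{i,\nu}^{m(\nu)},
\]
a difference of two monomials with nonzero scalar coefficients. To show this is homogeneous I must check both monomials have the same $W^*$-degree. By Lemma~\ref{lemma:pathshomo}, $W^*(C_{i,\mu}^{m(\mu)}) = \id_G = W^*(C_{i,\nu}^{m(\nu)})$, so both terms live in degree $\id_G$ and the difference is homogeneous.

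Having shown that every element of the standard generating set $\rho_\Delta$ for $I_\Delta$ is homogeneous, the ideal $I_\Delta$ they generate admits a homogeneous set of generators, which is exactly the claim. There is no real obstacle here — the whole argument is an immediate application of Lemma~\ref{lemma:pathshomo} together with the observation that types two and three are already monomial.
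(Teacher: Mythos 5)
Your argument is correct and is essentially identical to the paper's own proof: the monomial relations of types two and three are homogeneous because they are paths, and the type one relations are homogeneous of degree $\id_G$ by Lemma~\ref{lemma:pathshomo}. Nothing is missing.
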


\begin{proof}
The relations of type one in $I_\Delta$ are of the form $q_{i, \mu}C_{i,\mu}^{m(\mu)} - q_{i,\nu}C_{i,\nu}^{m(\nu)}$. By Lemma~\ref{lemma:pathshomo}, these are homogeneous of degree $\id_G$. The relations of types two and three are homogeneous since they are paths.
\end{proof}

The weight function $W^*: ({\mathcal Q}_\Delta)_1 \to G$ gives rise to a covering quiver ${\mathcal Q}_{W^*}$ with vertex set $({\mathcal Q}_\Delta)_0 \times G = \{ v_g \mid v \in ({\mathcal Q}_\Delta)_0, g \in G \}$ and arrow set  $({\mathcal Q}_\Delta)_1 \times G = \{a_g \mid a \in ({\mathcal Q}_\Delta)_1, g \in G \}$ such that if
$\xymatrix@1{v \ar[r]^a & w}$
is an arrow in ${\mathcal Q}_\Delta$ then
$\xymatrix@1{v_g \ar[r]^<<<<<{a_g} & w_{gW^*(a)}}$
is an arrow in ${\mathcal Q}_{W^*}$.
Define the map $\pi \colon {\mathcal Q}_{W^*} \to {\mathcal Q}_\Delta$ by $\pi(v_g)= v$ and $\pi(a_g) = a$. We
extend $\pi$ to paths $a_1 a_2 \cdots a_\sigma$ in $K{\mathcal Q}_{W^*}$ by setting $\pi(a_1 a_2 \cdots a_\sigma) = \pi(a_1) \pi(a_2) \cdots \pi(a_\sigma)$. Hence we may linearly extend $\pi$ to a map $K{\mathcal Q}_{W^*} \to K{\mathcal Q}_\Delta$ which we also denote by $\pi$. Define $I_{W^*} = \pi^{-1}(I_\Delta)$. Then $K{\mathcal Q}_{W^*}/I_{W^*}$ is the covering algebra of $\A_\Delta = K{\mathcal Q}_\Delta/I_\Delta$ and we have the following theorem, whose proof is obtained from a careful analysis of the definitions and is left to the reader.

\begin{theorem}\label{thm:w*}
\sloppy
Let $(\Delta, \mo, m, q)$ be a quantized Brauer graph, $G$ a finite abelian group and $W \colon\Z_\Delta \to G$ a Brauer weighting.  Let $(\Delta_W, \mo_W, m_W, q_W)$ be the quantized Brauer covering graph and $W^* \colon ({\mathcal Q}_\Delta)_1 \to G$ the induced weight function. Then $\A_{\Delta_W} \cong K{\mathcal Q}_{W^*}/I_{W^*}$.
\end{theorem}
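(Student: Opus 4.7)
The plan is to construct a $K$-algebra isomorphism between $\A_{\Delta_W} = K{\mathcal Q}_{\Delta_W}/I_{\Delta_W}$ and $K{\mathcal Q}_{W^*}/I_{W^*}$ in two stages: first produce a quiver isomorphism $\varphi\colon {\mathcal Q}_{\Delta_W} \to {\mathcal Q}_{W^*}$, and then verify that the induced path-algebra isomorphism $\tilde\varphi$ carries $I_{\Delta_W}$ onto $I_{W^*}$.

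On vertices set $\varphi(v_{i_g}) = (v_i)_g$; both vertex sets are naturally indexed by $\Delta_1 \times G$. On arrows, if $j$ is the successor of $i$ at $\mu \in \Delta_0$ and $a\colon v_i \to v_j$ is the associated arrow in ${\mathcal Q}_\Delta$, then Proposition~\ref{prop:succ_seq} produces, for each $g \in G$ lying in the coset $H_{\mu,s}$, an arrow $v_{i_g} \to v_{j_{gW(i,j)}}$ in ${\mathcal Q}_{\Delta_W}$; send this to $a_g\colon (v_i)_g \to (v_j)_{gW(i,j)}$ in ${\mathcal Q}_{W^*}$, which is consistent with the vertex assignment since $W^*(a) = W(i,j)$. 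An arrow count confirms that $\varphi$ is a bijection, so $\tilde\varphi\colon K{\mathcal Q}_{\Delta_W} \to K{\mathcal Q}_{W^*}$ is a $K$-algebra isomorphism.

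The crux is to compute $\tilde\varphi(C_{i_g,\mu_d}^{m_W(\mu_d)})$. Using the explicit successor sequence of $i_g$ at $\mu_d = \mu_{[i,H_{\mu,s}]}$ in the proof of Proposition~\ref{prop:succ_seq}, the cycle $C_{i_g,\mu_d}$ of length $\val(\mu_d) = \ord(\mu)\val(\mu)$ maps under $\tilde\varphi$ to $\ord(\mu)$ consecutive $\pi$-lifts of $C_{i,\mu}$, starting at $(v_i)_g$; this concatenated path closes up because $\omega_\mu^{\ord(\mu)} = \id_G$. Raising to the $m_W(\mu_d) = m(\mu)/\ord(\mu)$-th power shows that $\tilde\varphi(C_{i_g,\mu_d}^{m_W(\mu_d)})$ equals the unique lift at $(v_i)_g$ of $C_{i,\mu}^{m(\mu)}$. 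Combined with $q_W((i_g,\mu_d)) = q_{i,\mu}$, this proves that each type-one generator $q_{i,\mu}C_{i_g,\mu_d}^{m_W(\mu_d)} - q_{i,\nu}C_{i_g,\nu_e}^{m_W(\nu_e)}$ of $I_{\Delta_W}$ maps under $\tilde\varphi$ to a lift of the type-one generator $q_{i,\mu}C_{i,\mu}^{m(\mu)} - q_{i,\nu}C_{i,\nu}^{m(\nu)}$ of $I_\Delta$, and hence lies in $I_{W^*} = \pi^{-1}(I_\Delta)$. Since the truncated edges of $\Delta_W$ are exactly those sitting over truncated edges of $\Delta$ by Proposition~\ref{prop:consequences}(2) applied to the free Brauer action of Theorem~\ref{thm:inducedfreeBraueraction}, type-two generators are treated in the same way; for a type-three generator $\alpha'\beta'$ in $I_{\Delta_W}$, one checks that $\alpha'\beta'$ fails to be a subpath of any $C_{i_g,\mu_d}$ if and only if its underlying arrow pair $ab$ in ${\mathcal Q}_\Delta$ fails to be a subpath of any $C_{i,\mu}$, so $\tilde\varphi(\alpha'\beta')$ is a lift of the type-three generator $ab \in I_\Delta$.

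For the reverse inclusion $I_{W^*} \subseteq \tilde\varphi(I_{\Delta_W})$, the corollary following Lemma~\ref{lemma:pathshomo} guarantees that $I_\Delta$ is generated by $W^*$-homogeneous elements, so $I_{W^*}$ is generated as an ideal by all $|G|$ lifts of each such generator, and every such lift has just been exhibited as the $\tilde\varphi$-image of a generator of $I_{\Delta_W}$. The main technical obstacle is the unwinding argument above: making rigorous the identification of $C_{i_g,\mu_d}$ with $\ord(\mu)$ consecutive lifts of $C_{i,\mu}$ requires careful bookkeeping via the explicit successor sequence and the Brauer-weighting condition $\ord(\mu)\mid m(\mu)$. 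Separately, the convention that a loop $i$ is regarded as two edges $i, \hat i$ with possibly inequivalent classes $[i,H_{\mu,s}]$ and $[\hat i,H_{\mu,s}]$ needs to be handled with care when defining $\varphi$, but introduces no genuine new difficulty.
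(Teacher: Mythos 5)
Your proof is correct. Note that the paper itself gives no argument for Theorem~\ref{thm:w*} --- it says the proof ``is obtained from a careful analysis of the definitions and is left to the reader'' --- and your write-up is exactly that analysis carried out in the natural way: both quivers have vertices indexed by $\Delta_1\times G$ and arrows by $({\mathcal Q}_\Delta)_1\times G$, and the decisive step, identifying $C_{i_g,\mu_d}^{m_W(\mu_d)}$ with the lift of $C_{i,\mu}^{m(\mu)}$ at $(v_i)_g$ via $\val(\mu_d)=\ord(\mu)\val(\mu)$, $m_W(\mu_d)=m(\mu)/\ord(\mu)$ and $\omega_\mu^{m(\mu)}=\id_G$, is precisely what makes the three types of relations (and hence the ideals) correspond.
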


The next result extends Theorem~\ref{thm:inducedfreeBraueraction} and shows that the canonical action of $G$ on $(\Delta_W, \mo_W, m_W)$ is a free Brauer action on $(\Delta_W, \mo_W, m_W, q_W)$.

\begin{proposition}
 Let $(\Delta, \mo, m, q)$ be a quantized Brauer graph and $W \colon \Z_\Delta \to G$ a Brauer weighting. Then the canonical action of $G$ on $\Delta_W$ is a free Brauer action on the quantized Brauer covering graph $(\Delta_W, \mo_W, m_W, q_W)$.
\end{proposition}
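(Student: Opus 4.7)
The plan is to reduce the claim to Theorem~\ref{thm:inducedfreeBraueraction} together with a direct check of the single additional quantization compatibility condition from Definition~\ref{dfn:quant_Br_action}.

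First, Theorem~\ref{thm:inducedfreeBraueraction} already gives that the canonical action of $G$ on $(\Delta_W,\mo_W,m_W)$ satisfies all requirements of Definition~\ref{dfn:free_Br_action}: the action is faithful and orientation preserving, preserves the multiplicity function $m_W$, and acts freely on the edge set $(\Delta_W)_1$ (because $(i_g)^h=i_{gh}$). So the only thing left to verify is the quantization condition
\[
\frac{(q_W)_{i_g,\mu_d}}{(q_W)_{i_g,\nu_e}} \;=\; \frac{(q_W)_{(i_g)^h,(\mu_d)^h}}{(q_W)_{(i_g)^h,(\nu_e)^h}}
\]
for every $h\in G$ and every edge $i_g$ in $(\Delta_W)_1$ with endpoints $\mu_d,\nu_e$ that is not truncated at either endpoint.

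Secondly, I would unwind the defining formula $q_W((i_g,\mu_d))=q_{i,\mu}$. By Proposition~\ref{prop:consequences}(2) and (3) applied to the free Brauer action of Theorem~\ref{thm:inducedfreeBraueraction}, the truncation status is preserved under the $G$-action, so if $i_g$ is not truncated at $\mu_d$ or $\nu_e$ then $(i_g)^h=i_{gh}$ is not truncated at $(\mu_d)^h$ or $(\nu_e)^h$; in particular, all four values of $q_W$ above are defined. Now the action sends $\mu_d$ to $\mu_{d'}$ for some $d'\in\D_\mu$ (the underlying vertex $\mu$ of $\Delta$ is unchanged), and similarly $\nu_e\mapsto\nu_{e'}$, while the underlying edge $i\in\Delta_1$ of $i_g$ is unchanged by the action. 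Hence, by the definition of $q_W$,
\[
(q_W)_{(i_g)^h,(\mu_d)^h}=(q_W)_{i_{gh},\mu_{d'}}=q_{i,\mu}=(q_W)_{i_g,\mu_d},
\]
and analogously for $\nu$. Both sides of the required identity are therefore equal to $q_{i,\mu}/q_{i,\nu}$, and the identity holds trivially.

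There is no real obstacle here; the only subtlety to be careful about is the bookkeeping ensuring that the endpoints on the right-hand side really are $\mu_{d'}$ and $\nu_{e'}$ for some $d'\in\D_\mu$, $e'\in\D_\nu$ (rather than vertices over different base vertices), which is immediate from the definition of the $G$-action on $(\Delta_W)_0$ given before the statement of Theorem~\ref{thm:inducedfreeBraueraction}. Once this is observed, the whole compatibility condition collapses because $q_W$ only depends on the underlying $(i,\mu)\in\X_\Delta$.
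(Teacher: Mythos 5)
Your proof is correct and follows essentially the same route as the paper: invoke Theorem~\ref{thm:inducedfreeBraueraction} for the free Brauer action on $(\Delta_W,\mo_W,m_W)$, then observe that the quantization condition of Definition~\ref{dfn:quant_Br_action} holds trivially because $q_W((i_g,\mu_d))=q_{i,\mu}$ depends only on the underlying pair $(i,\mu)$, which the $G$-action leaves unchanged. Your extra remark that truncation status is preserved (so all four $q_W$-values are defined) is a small point the paper leaves implicit, but it does not change the argument.
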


\begin{proof}
Suppose that $i \in \Delta_1$ has endpoints $\mu$ and $\nu$, $g \in G$, and the cosets $H_{\mu, s}$ and $H_{\nu, t}$ both contain $g$. Then the endpoints of $i_g$ are $\mu_d$ and $\nu_e$ where $d = [i, H_{\mu, s}]$ and $e = [i,  H_{\nu, t}]$. By Theorem~\ref{thm:inducedfreeBraueraction} there is a free Brauer action of $G$ on $(\Delta_W, \mo_W, m_W)$. It remains to show that
$$\frac{q_W((i_g, \mu_d))}{q_W((i_g, \nu_e))} = \frac{q_W((i_g, \mu_d)^h)}{q_W((i_g, \nu_e)^h)}$$
for all $h \in G$. Let $h \in G$. Then $(i_g, \mu_d)^h = ((i_g)^h, (\mu_d)^h) = (i_{gh}, \mu_{d'})$ where
$d' = [i_g, H_{\mu, s}h]$ and $(i_g, \nu_e)^h = ((i_g)^h, (\nu_e)^h) = (i_{gh}, \nu_{e'})$ where
$e' = [i_g, H_{\nu, t}h]$. It follows from the definition of $q_W$ that
$$\frac{q_W((i_g, \mu_d))}{q_W((i_g, \nu_e))} = \frac{q_{i, \mu}}{q_{i, \nu}} = \frac{q_W((i_g, \mu_d)^h)}{q_W((i_g, \nu_e)^h)}.$$
This completes the proof.
\end{proof}

\begin{dfn}
A map $\varphi\colon (\Gamma, \mo, m, q) \to (\Gamma', \mo', m', q')$
is an \emph{isomorphism of quantized Brauer graphs} if
$\varphi \colon (\Gamma, \mo, m) \to (\Gamma', \mo', m')$
is an isomorphism of Brauer graphs, and the isomorphism from
$K\Q_{\Gamma}$ to $K\Q_{\Gamma'}$ induced by $\varphi$, restricts to
an isomorphism from the ideal of relations $I_{\Gamma}$ to
$I_{\Gamma'}$.
In this case, we say that $(\Gamma, \mo, m, q)$ and $(\Gamma', \mo', m', q')$ are
\emph{isomorphic as quantized Brauer graphs}.
\end{dfn}

We are now in a position to prove the main result of this section.

\begin{theorem}\label{thm:algiso}
\sloppy
Let $(\Delta, \mo, m, q)$ be a quantized Brauer graph, $G$ a finite abelian group and $W \colon\Z_\Delta \to G$ a Brauer weighting. Let $(\Delta_W, \mo_W, m_W, q_W)$ be the quantized Brauer covering graph with the canonical action of $G$ and $(\overline{\Delta_W}, \overline{\mo_W}, \overline{m_W}, \overline{q_W})$ the associated quantized Brauer orbit graph. Then $(\Delta, \mo, m, q)$ and $(\overline{\Delta_W}, \overline{\mo_W}, \overline{m_W}, \overline{q_W})$
are isomorphic as quantized Brauer graphs and there are algebra isomorphisms
$$\A_\Delta \cong \A_{\overline{\Delta_W}} \cong \overline{\A_{\Delta_W}}.$$
\end{theorem}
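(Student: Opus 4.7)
The plan is to prove the isomorphism of quantized Brauer graphs first and then deduce the two algebra isomorphisms as immediate consequences. By Theorem~\ref{thm:inducedfreeBraueraction}, the map $\varphi\colon\overline{\Delta_W}\to\Delta$ sending $\overline{\mu_d}\mapsto\mu$ and $\overline{i_g}\mapsto i$ is an isomorphism of (unquantized) Brauer graphs with $\overline{m_W}(\overline{\mu_d})=m(\mu)$. It therefore suffices to show that the induced $K$-algebra isomorphism $K\Q_{\overline{\Delta_W}}\to K\Q_{\Delta}$ sends the ideal $I_{\overline{\Delta_W}}$ onto $I_{\Delta}$.

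The relations of types two and three depend only on the underlying Brauer graph combinatorics and are preserved by $\varphi$ automatically. For type one relations, I use the observation, made just after Definition~\ref{dfn:quant_Br_action}, that the ideal depends on the quantizing function only through the ratios $q_{i,\mu}/q_{i,\nu}$ at the two endpoints of each non-truncated edge. Fix such an edge $i$ in $\Delta$ with endpoints $\mu,\nu$, pick any lift $i_g$ to $\Delta_W$, and write $d=[i,H_{\mu,s}]$, $e=[i,H_{\nu,t}]$ where $H_{\mu,s}\ni g$ and $H_{\nu,t}\ni g$, so that $\overline{\mu_d}$ and $\overline{\nu_e}$ are the endpoints of $\overline{i_g}$. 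By the definition of $q_W$, one has $q_W((i_g,\mu_d))=q_{i,\mu}$ and $q_W((i_g,\nu_e))=q_{i,\nu}$; applying Proposition~\ref{prop:bar-q} to the free Brauer action of $G$ on $(\Delta_W,\mo_W,m_W,q_W)$ guaranteed by the preceding proposition gives
$$
\frac{\overline{q_W}_{\overline{i_g},\overline{\mu_d}}}{\overline{q_W}_{\overline{i_g},\overline{\nu_e}}}
=\frac{q_W((i_g,\mu_d))}{q_W((i_g,\nu_e))}=\frac{q_{i,\mu}}{q_{i,\nu}}.
$$
Combined with matching cycle exponents (since $\overline{m_W}(\overline{\mu_d})=m(\mu)$), this shows that $\varphi$ sends each type one generator of $I_{\overline{\Delta_W}}$ at $\overline{i_g}$ to a nonzero scalar multiple of the corresponding type one generator of $I_{\Delta}$ at $i$. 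Hence $\varphi$ is an isomorphism of quantized Brauer graphs, which yields $\A_\Delta\cong\A_{\overline{\Delta_W}}$.

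For the second isomorphism, the free Brauer action of $G$ on the quantized covering $(\Delta_W,\mo_W,m_W,q_W)$ established in the preceding proposition lets me apply Theorem~\ref{thm:firstiso} directly to conclude $\overline{\A_{\Delta_W}}\cong\A_{\overline{\Delta_W}}$. Composing with the first isomorphism produces the full chain $\A_\Delta\cong\A_{\overline{\Delta_W}}\cong\overline{\A_{\Delta_W}}$, as required. The argument presents no deep obstacle, only bookkeeping: the one subtle point is that $\overline{q_W}$ is defined using $G$-equivariant choices of $\E_1,\E_2$ on $\Delta_W$ as in Lemma~\ref{lemma:pickE}, but once these choices are fixed, the ratio computation above is visibly independent of them and the proof goes through cleanly.
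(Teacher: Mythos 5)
Your proposal is correct and follows essentially the same route as the paper: both reduce to Theorem~\ref{thm:inducedfreeBraueraction} for the underlying Brauer graph isomorphism, use the definition of $q_W$ together with Proposition~\ref{prop:bar-q} to match the ratios $q_{i,\mu}/q_{i,\nu}$ in the type one relations, and invoke Theorem~\ref{thm:firstiso} for the isomorphism $\A_{\overline{\Delta_W}}\cong\overline{\A_{\Delta_W}}$. The only cosmetic difference is that you phrase the conclusion in terms of generators mapping to scalar multiples, while the paper speaks of relations restricting to relations; the content is identical.
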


\begin{proof}
\sloppy The second isomorphism holds by Theorem~\ref{thm:firstiso}. By Theorem~\ref{thm:inducedfreeBraueraction}, $(\Delta, \mo, m)$ and $(\overline{\Delta_W}, \overline{\mo_W}, \overline{m_W})$ are isomorphic as Brauer graphs. We need to show that the induced isomorphism from $K\Q_{\overline{\Delta_W}}$ to $K\Q_{\Delta}$ restricts to
an isomorphism from $I_{\overline{\Delta_W}}$ to $I_{\Delta}$. First we consider relations of type one.
Let $i$ be an edge in $\Delta$ such that $i$ is not truncated at either of its endpoints $\mu$ and $\nu$, and let $g \in G$ with $g \in H_{\mu, s}$ and $g \in H_{\nu, t}$.
By Proposition~\ref{prop:bar-q} we have
$$\frac{\overline{q_W}((\overline{i_g}, \overline{\mu_d}))}{\overline{q_W}((\overline{i_g}, \overline{\nu_e}))} = \frac{{q_W}((i_g, \mu_d))}{{q_W}((i_g, \nu_e))}$$
and, by definition of the quantizing function $q_W$, we have ${q_W}((i_g, \mu_d)) = q_{i, \mu}$ and ${q_W}((i_g, \nu_e)) = q_{i, \nu}$.
Thus
$$\frac{\overline{q_W}((\overline{i_g}, \overline{\mu_d}))}{\overline{q_W}((\overline{i_g}, \overline{\nu_e}))} = \frac{q_{i, \mu}}{q_{i, \nu}}.$$
Hence a relation of type one in $I_{\overline{\Delta_W}}$ restricts to a relation of type one in $I_{\Delta}$.
We leave it to the reader to verify the corresponding statement for relations of types two and three. Hence
$(\Delta, \mo, m, q)$ and $(\overline{\Delta_W}, \overline{\mo_W}, \overline{m_W}, \overline{q_W})$
are isomorphic as quantized Brauer graphs and
$\A_\Delta \cong \A_{\overline{\Delta_W}}$.
\end{proof}

\section{From actions to orbits to coverings}\label{section:quot-to-coverings}

Throughout this section, we assume that $G$ is a finite abelian
group with free Brauer action on the quantized Brauer graph
$(\Gamma, \mathfrak o, m, q)$.  We denote the quantized Brauer orbit
graph associated to the action of $G$ by
$(\overline{\Gamma}, \overline{\mo}, \overline{m}, \overline{q})$.
We show that there is a Brauer weighting $W\colon
\Z_{\overline \Gamma}\to G$ such that the quantized Brauer covering graph
$((\overline \Gamma)_W,(\omo)_W,(\om)_W,(\oq)_W)$ is isomorphic to $(\Gamma,\mo,m,q)$.  In particular,
the Brauer graph algebras $\A_{\Gamma}$ and $\A_{(\overline \Gamma)_W}$ are
isomorphic algebras.

We begin with the construction of $W\colon \Z_{\overline \Gamma}\to G$.  If $i$
(respectively $\mu$) is an edge (resp.\ a vertex) in $\Gamma$ then,
as before, we denote the orbit of $i$ (resp.\ $\mu$) under the action
of $G$ by $\bar i$ (resp.\ $\bar \mu$) and view $\bar i$ (resp.\ $\bar
\mu$) as both an edge (resp.\ a vertex) in $\overline \Gamma$ and as an
orbit set in $\Gamma$. For each edge $\bar i$ in $\overline \Gamma$,
we fix an edge $i_*\in\Gamma$ in the orbit $\bar i$.  Next, suppose that
$\bar \mu$ and $\bar \nu$ are the endpoints of $\bar i$ in
$\overline \Gamma$. Choose $\mu_{*}$ (respectively $\nu_*$) in the
orbit of $\bar \mu$ (resp.\ $ \bar\nu$) so that $i_*$ is incident
with $\mu_*$ and $\nu_*$. Note that there is a unique choice for
$\mu_{*}$ and $\nu_*$ unless $\bar i$ is a loop and $i_*$ is not a
loop.  Now let $\bar j$ be the successor of $\bar i$ at vertex
$\bar\mu$ in $\overline\Gamma$.  Then there is an unique edge $l \in \bar j$ such that
$l$ is the successor of $i_*$ at vertex $\mu_*$ in $\Gamma$.  Hence
$l=(j_*)^g$, for some $g\in G$.  We define $W(\bar i,\bar j) = g$.  We
note that $W$ is dependent on the choices of $i_*$ and $\mu_*$.  We
call $W$ the \emph{successor weighting associated to the
action of $G$ on $(\Gamma,\mathfrak o,m)$}.

\begin{lemma}\label{lem:assocW}
Let $W\colon\Z_{\overline\Gamma}\to
G$ be the successor weighting associated to the free Brauer action of
$G$ on $(\Gamma,\mathfrak o,m)$.  Suppose that $i$ is an edge of
$\Gamma$ incident with the vertex $\mu$ and that
$i=i_1,i_2, \ldots, i_k, i_1^g, \ldots, i_k^g, \ldots, i_1^{g^s}, \ldots, i_k^{g^s}$ is the
successor sequence of $i$ at $\mu$, where $g \in G$.  Then
\begin{enumerate}
\item $g = \omega_{\bar\mu} = W(\overline{i_1},\overline{i_2})W(\overline{i_2},\overline{i_3})\cdots W(\overline{i_k},\overline{i_{k+1}})$, where $\overline{i_{k+1}}=\overline{i_{1}}$.
\item The order of $\omega_{\bar\mu} = \ord(\bar\mu) = s+1$.
\item $\ord(\bar\mu)\mid \overline m(\bar\mu)$.
\item If $h,h'\in G$ then $\mu^h=\mu^{h'}$ if and only if
$h(h')^{-1}\in H_{\bar\mu}$, where $H_{\bar\mu}$ is the subgroup of
$G$ generated by $\omega_{\bar\mu}$.
\item The index of $H_{\bar\mu}$ in $G$ equals the number of vertices in the orbit of
$\mu$.
\end{enumerate}
\end{lemma}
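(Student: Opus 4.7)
The plan is to prove parts (1)--(5) in order, with (1) and (2) forming the technical core and (3)--(5) following readily from them together with earlier material.

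For (1), I would parametrize the non-canonical choices inherent in the construction of $W$: write $(i_r)_* = i_r^{h_r}$ for some $h_r\in G$, and let $\mu_*^{(r)}$ denote the chosen endpoint of $(i_r)_*$ in the $G$-orbit of $\mu$. Since the action is orientation-preserving, the successor of $(i_r)_*$ at $\mu_*^{(r)}$ in $\Gamma$ equals $i_{r+1}^{h_r}$ for $r<k$ and $i_1^{g h_k}$ for $r=k$. Equating this with $((i_{r+1})_*)^{W(\bar{i}_r,\bar{i}_{r+1})}$ and using freeness of the action on edges should yield $W(\bar{i}_r,\bar{i}_{r+1}) = h_r h_{r+1}^{-1}$ for $1 \le r < k$ and $W(\bar{i}_k,\bar{i}_{k+1}) = g\,h_k h_1^{-1}$, where $h_{k+1} = h_1$ since $\overline{i_{k+1}} = \overline{i_1}$. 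Multiplying these out in the abelian group $G$, the $h_r$ factors telescope and leave $\omega_{\bar\mu} = g$.

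For (2), I would extend the successor sequence one step past position $k(s+1) = \val(\mu)$: by cyclicity the next edge is $i_1$, while by the Brauer action applied to the relation ``successor of $i_k$ at $\mu$ is $i_1^g$'' the successor of $i_k^{g^s}$ is $i_1^{g^{s+1}}$. Reconciling these via freeness on edges gives $g^{s+1} = \id_G$, so $\ord(g)$ divides $s+1$. Sharpness is immediate from the distinctness of the $\val(\mu)$ edges in the sequence: were $g^t = \id_G$ for some $0 < t \le s$, then $i_1^{g^t} = i_1$ would appear at two different positions, contradicting that the sequence enumerates all edges at $\mu$. Combined with (1), this gives $\ord(\omega_{\bar\mu}) = \ord(\bar\mu) = s+1$.

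Part (3) is then immediate from Proposition~\ref{prop:mult}, since $\om(\bar\mu) = \val(\mu) m(\mu)/\val(\bar\mu) = (s+1) m(\mu) = \ord(\bar\mu)\,m(\mu)$. For (4), in one direction I would use the sequence structure and orientation preservation to show that the generator $g = \omega_{\bar\mu}$ of $H_{\bar\mu}$ must fix $\mu$ (the cyclic ordering at $\mu$ is carried by $g$ to that at $\mu^g$, but the sequence forces these orderings to coincide on the same set of edges at $\mu$), and hence $H_{\bar\mu}$ is contained in the stabilizer of $\mu$. Conversely, if $\mu^f = \mu$, then $i_1^f$ is incident with $\mu$ and hence appears in the successor sequence of $i_1$ at $\mu$; writing $i_1^f = i_r^{g^n}$, the distinctness of the orbits $\bar{i}_1,\dots,\bar{i}_k$ established in Lemma~\ref{lemma:val} forces $r = 1$, and freeness on edges then gives $f = g^n \in H_{\bar\mu}$. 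Finally, (5) follows from the orbit--stabilizer theorem applied to the $G$-action on $\Gamma_0$, using (4) to identify the stabilizer of $\mu$ with $H_{\bar\mu}$.

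The main obstacle is the bookkeeping in (1): the weighting $W$ depends on non-canonical choices of representatives $i_*$ and vertex endpoints $\mu_*$, and the proof must demonstrate that these auxiliary parameters $h_r$ cancel telescopically in $G$, leaving only the structural element $g$. The subtleties caused by loops in $\Gb$, where the choice of $\mu_*$ is genuinely non-unique, will require care but should not affect the final telescoping.
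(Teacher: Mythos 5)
Your proposal is correct and takes essentially the same route as the paper's proof: both identify $g$ with $\omega_{\bar\mu}$ by comparing the given successor sequence with the one computed from the definition of $W$ together with freeness of the action on edges, read off $\ord(\bar\mu)=s+1$ from the wrap-around of that sequence, and then obtain (3) from Proposition~\ref{prop:consequences} and Lemma~\ref{lemma:val}, (4) from the incidence structure of the successor sequence, and (5) by orbit--stabilizer. The only cosmetic difference is in (1), where the paper writes down the successor sequence of the fixed representative $(i_1)_*$ at $\mu_*$ directly in terms of the partial products $\omega(\overline{i_1},\overline{i_r})$ instead of telescoping your transition elements $h_r$.
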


\begin{proof} From our hypothesis, we see that $\overline{i_1},\overline{i_2}, \ldots, \overline{i_k}$
is the successor sequence of $\bar i$ at vertex $\bar\mu$.
From the definitions of the successor weight function $W$ and the
$\omega(\overline{i_1}, \overline{i_j})$, we conclude that the successor sequence of $i_* = (i_1)_*$ at $\mu_*$
is
$$(i_1)_*,(i_2)_*^{\omega(\overline{i_1}, \overline{i_2})}, \ldots,
(i_k)_*^{\omega(\overline{i_1}, \overline{i_k})}, (i_1)_*^{\omega_{\bar\mu}}, \ldots,
(i_k)_*^{\omega_{\bar\mu}}, \ldots,
(i_1)_*^{{\omega_{\bar\mu}}^s}, \ldots, (i_k)_*^{{\omega_{\bar\mu}}^s}.$$
Since the action of $G$ is free on the edges of $\Gamma$, and since the smallest
positive integer $l$ such that $((i_1)_*)^l=(i_1)_*$ is
$\omega_{\bar\mu}^{s+1}$, we conclude that the order of
$\omega_{\bar\mu}$ is $s+1$.  Similarly, since
$((i_1)_*)^g = ((i_1)_*)^{\omega_{\bar\mu}}$, we see that
$g=\omega_{\bar\mu}$ and we have shown (1) and (2) hold.

From Proposition~\ref{prop:consequences}, we have that
$\om(\bar\mu)=m(\mu)\val(\mu)/\val(\bar\mu)$, and from
Lemma~\ref{lemma:val}, we have that $\val(\mu)=\val(\bar\mu)(s+1)$.
So $\om(\bar\mu)=m(\mu)(s+1)$, and (3) now follows from (2).

Next, consider $(\mu_*)^h$, for
$h \in G$.  From the successor
sequence of $(i_1)_*$ at $\mu_*$, we have that $(\mu_*)^h = \mu_*$ if
and only if $h = \omega_{\bar\mu}^\theta$ for some $0 \leq \theta < \ord(\bar\mu)$.  Hence, if $h,h'\in
G$, then $\mu_*^h = \mu_*^{h'}$ if and only if $h(h')^{-1}\in H_{\bar\mu}$ and (4) now follows after
noting that $\mu = (\mu_*)^{h''}$ for some $h''\in G$.

That the index of $H_{\bar \mu}$ equals the number of vertices
in the orbit of $\mu$ follows from (4), and the proof is complete.
\end{proof}

As an immediate consequence of \ref{lem:assocW}(3), we have the following result.

\begin{corollary}\label{cor:assocW} The successor weighting associated to the free Brauer
action of $G$ on $(\Gamma,\mathfrak o,m)$ is a Brauer weighting.
\end{corollary}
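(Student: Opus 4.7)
The plan is essentially to observe that this corollary is just a relabelling of Lemma~\ref{lem:assocW}(3). By Definition~\ref{dfn:multGammaw}(1), to show that $W\colon\Z_{\overline{\Gamma}}\to G$ is a Brauer weighting I must verify that $\ord(\bar\mu)\mid \overline{m}(\bar\mu)$ for every vertex $\bar\mu\in\overline{\Gamma}_0$.

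So I would fix an arbitrary vertex $\bar\mu\in\overline{\Gamma}_0$, pick any preimage $\mu\in\Gamma_0$ under the orbit map, and choose any edge $i$ of $\Gamma$ incident with $\mu$ (such an edge exists since $\Gamma$ has at least one edge and is connected, so every vertex has positive valency). Writing the successor sequence of $i$ at $\mu$ in the standard form
\[
i=i_1,i_2,\dots,i_k,i_1^g,\dots,i_k^g,\dots,i_1^{g^s},\dots,i_k^{g^s}
\]
afforded by Lemma~\ref{lemma:val}, I can apply Lemma~\ref{lem:assocW}(3) to conclude that $\ord(\bar\mu)\mid \overline{m}(\bar\mu)$. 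Since $\bar\mu$ was arbitrary, $W$ satisfies the Brauer weighting condition.

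There is essentially no obstacle here: all of the work has already been carried out in establishing parts (1)--(3) of Lemma~\ref{lem:assocW}. The only thing to check is that the ``order'' appearing in Definition~\ref{dfn:multGammaw}(1), namely the order of $\omega_{\bar\mu}=\prod_{(\bar i,\bar j)\in\Z_{\bar\mu}}W(\bar i,\bar j)$, coincides with the quantity $s+1$ identified in Lemma~\ref{lem:assocW}(2); but this is precisely the content of parts (1) and (2) of that lemma, so the corollary follows immediately.
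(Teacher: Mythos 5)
Your proof is correct and matches the paper exactly: the authors likewise derive the corollary as an immediate consequence of Lemma~\ref{lem:assocW}(3), since the Brauer weighting condition of Definition~\ref{dfn:multGammaw}(1) for $W\colon\Z_{\overline{\Gamma}}\to G$ is precisely the divisibility $\ord(\bar\mu)\mid\overline{m}(\bar\mu)$ established there. Your additional remark that the order in the definition is the order of $\omega_{\bar\mu}$ as identified in parts (1) and (2) of that lemma is a sensible, if brief, elaboration of what the paper leaves implicit.
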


We call the successor weighting associated to a free Brauer action of $G$ on  $(\Gamma,\mathfrak
o,m)$, the \emph{Brauer weighting
associated to the action of $G$ on $(\Gamma,\mathfrak
o,m)$}.

\begin{theorem}\label{thm:orbit2covering}
Suppose that $G$ is a finite abelian group with a free Brauer action on the Brauer graph
$(\Gamma,\mo, m)$. Let $(\overline\Gamma, \omo, \om)$ be the Brauer orbit graph and
$(\overline\Gamma_W, \omo_W, \om_W)$ the
Brauer covering graph obtained from the Brauer weighting
$W$ associated to the action of $G$ on $(\Gamma, \mo, m)$.  Then $(\Gamma, \mo, m)$ and
$(\overline\Gamma_W, \omo_W, \om_W)$ are isomorphic as Brauer graphs.
Moreover, if $(\Gamma,\mo, m,q)$ is a quantized Brauer graph with free
Brauer action by $G$, then
$(\Gamma,\mo, m, q)$ and $(\overline\Gamma_W, \omo_W, \om_W, \oq_W)$ are
isomorphic as quantized Brauer graphs.
In particular, the associated Brauer graph algebras $\A_{\Gamma}$ and $\A_{\overline{
\Gamma}_W}$ are isomorphic.
\end{theorem}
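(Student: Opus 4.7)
The plan is to construct an explicit isomorphism $\varphi\colon(\overline\Gamma_W,\omo_W,\om_W)\to(\Gamma,\mo,m)$ using the choices built into the weighting $W$: for each $\bar i\in\overline\Gamma_1$ we have a distinguished lift $i_*\in\Gamma_1$, and for each endpoint $\bar\mu$ of $\bar i$ we have a distinguished endpoint $\mu_{*,\bar i}\in\bar\mu$ of $i_*$. On edges I set $\varphi(\bar i_g)=(i_*)^g$; since $G$ acts freely on $\Gamma_1$, this is a bijection $(\overline\Gamma_W)_1\to\Gamma_1$ that intertwines the canonical $G$-action on $\overline\Gamma_W$ with the given action on $\Gamma$.

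On vertices, given $\bar\mu_d$ with $d=[\bar i,H_{\bar\mu,s}]$, I define $\varphi(\bar\mu_d)=(\mu_{*,\bar i})^h$ for any $h\in H_{\bar\mu,s}$. Independence of the choice of $h$ is immediate from Lemma~\ref{lem:assocW}(4), since any two elements of $H_{\bar\mu,s}$ differ by an element of $H_{\bar\mu}$. Independence of the representative $(\bar i,H_{\bar\mu,s})$ of $d$ reduces by induction along the successor sequence to the single equivalence generator: when $\bar j$ is the successor of $\bar i$ at $\bar\mu$, the definition of $W$ states that $(j_*)^{W(\bar i,\bar j)}$ is the successor of $i_*$ at $\mu_{*,\bar i}$, so in particular $\mu_{*,\bar i}=(\mu_{*,\bar j})^{W(\bar i,\bar j)}$. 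Combined with the commutativity of $G$, this yields $\varphi(\bar\mu_d)=\varphi(\bar\mu_{d'})$ for $d'=[\bar j,H_{\bar\mu,s}W(\bar i,\bar j)]$.

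The remaining verifications are mostly bookkeeping. Incidence of edges under $\varphi$ is another application of Lemma~\ref{lem:assocW}(4); the cyclic ordering is preserved by matching the successor sequence in $\overline\Gamma_W$ of Proposition~\ref{prop:succ_seq} term by term with the successor sequence in $\Gamma$ from Lemma~\ref{lemma:val}; and Definition~\ref{dfn:multGammaw} combined with parts (2) and (3) of Lemma~\ref{lem:assocW} gives $\om_W(\bar\mu_d)=\om(\bar\mu)/\ord(\bar\mu)=m(\mu)(s+1)/(s+1)=m(\mu)$. For the quantized statement, the definition of $\oq_W$ together with Proposition~\ref{prop:bar-q} shows that $\varphi$ carries the type one relation indexed by $\bar i_g$ to that indexed by $(i_*)^g$, while type two and type three relations are monomial and carry over automatically; the algebra isomorphism $\A_\Gamma\cong\A_{\overline\Gamma_W}$ is then immediate. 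The main technical obstacle will be the loop case, in which $\bar i$ is a loop at $\bar\mu$ but $i_*$ need not be, so the two occurrences of $\bar i$ in the successor sequence at $\bar\mu$ must be tracked separately when defining $\mu_{*,\bar i}$ and when checking well-definedness.
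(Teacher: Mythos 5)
Your proposal is correct and follows essentially the same route as the paper: the same map $\varphi(\bar i_g)=(i_*)^g$, $\varphi(\bar\mu_d)=(\mu_*)^h$, with well-definedness from Lemma~\ref{lem:assocW}(3),(4), the cyclic ordering matched via the explicit successor sequences, the multiplicity computation $\om_W(\bar\mu_d)=\om(\bar\mu)/\ord(\bar\mu)=m(\mu)$, and the quantized part reduced to $\oq_W$ and Proposition~\ref{prop:bar-q}. Your extra attention to the dependence of $\mu_*$ on $\bar i$ and to the loop case is a reasonable refinement of the same argument rather than a different approach.
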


\begin{proof}
We begin by defining an isomorphism $\varphi\colon \overline\Gamma_W\to
\Gamma$ of graphs with cyclic ordering on the edges.  If $\bar i_g$ is
an edge in $\overline\Gamma_W$, we let
$\varphi(\bar i_g) = (i_*)^g$. The action of $G$ is free on the edges of
$\Gamma$ so $\varphi$ is a bijection when restricted to the edge sets.
For $d = [\bar i, H_{\bar\mu,s}] \in \D_{\bar\mu}$, we define
$\varphi(\bar\mu_d)= (\mu_*)^h$, where $h$ is an element of the coset
$H_{\bar\mu,s}$.  By Lemma~\ref{lem:assocW}(3) and (4), $\varphi(\bar\mu_d)$ is
independent of the choice of $h$, and so $\varphi$ is a bijection when
restricted to the vertex sets.

Let $\bar i$ be an edge in $\overline\Gamma$ incident with $\bar\mu$
and let $\bar i = \overline{i_1}, \overline{i_2}, \ldots, \overline{i_k}$ be the successor sequence of
$\bar i$ at vertex $\bar\mu$.  If $i\in\Gamma_1$ is in the orbit
$\bar i$ and is incident with vertex $\mu$ where $\mu\in\bar\mu$, and
$i$ has successor sequence
$i = i_1, \ldots, i_k, i_1^g, \ldots, i_k^g, \ldots, i_1^{g^s}, \ldots, i_k^{g^s}$
at vertex $\mu$, then, since $g = \omega_{\bar\mu}$ from Lemma~\ref{lem:assocW}(1), if $\bar i_h$ is in $\overline\Gamma_W$ then
$\bar i_h$ has successor sequence
$$\begin{array}{l}
\bar i_h= (\overline{i_1})_h,(\overline{i_2})_h^{\omega(\overline{i_1},\overline{i_2})}, \ldots,
(\overline{i_k})_h^{\omega(\overline{i_1},\overline{i_k})},
(\overline{i_1})_{h}^{\omega_{\bar\mu}},(\overline{i_2})_h^{\omega_{\bar\mu}\omega(\overline{i_1},\overline{i_2})},
\ldots, (\overline{i_k})_h^{\omega_{\bar\mu}\omega(\overline{i_1},\overline{i_k})}, \ldots,\\
\hspace*{2cm} (\overline{i_1})_{h}^{\omega_{\bar\mu}^s}, \ldots,
(\overline{i_k})_h^{\omega_{\bar\mu}^s\omega(\overline{i_1},\overline{i_k})}
\end{array}$$
at vertex $\bar \mu_d$, where $d = [\bar i, H_{\bar \mu,s}]$ and $h \in H_{\bar\mu,s}$.
Using Lemma~\ref{lem:assocW}(3),
the action of $G$ on the edges of $\Gamma$, and the definition of $\varphi$, we see that
the successor sequence of $\bar i_h$ at vertex $\bar\mu_d$ in
$\overline\Gamma_W$ is sent, under $\varphi$, to the successor sequence
of $i^h$ at vertex $\mu^h$ in $\Gamma$. From this we see
that $\varphi$ is a graph isomorphism that preserves the cyclic
ordering on the edges at each vertex.

It remains to show, identifying $\Gamma$ and $\overline{\Gamma}_W$
via $\varphi$, that $m=\overline{m}_W$ and that $q$ and $\overline{q}_W$ generate the same ideal of relations in the associated Brauer graph algebras.
From Definition~\ref{dfn:multGammaw}, we see that
if $\mu$ is a vertex in $\Gamma$ and $\bar\mu_d$ is a vertex in $\overline\Gamma_W$,
then $\om_W(\bar\mu_d)= \om(\bar\mu)/\ord(\bar\mu)$. By Proposition~\ref{prop:mult}, $\om(\bar\mu)=\val(\mu)m(\mu)/\val(\bar\mu)$. However, from Lemmas~\ref{lemma:val}
and \ref{lem:assocW}(2), we have that $\frac{\val(\mu)}{\val(\bar\mu)\ord(\bar\mu)}=1$.
Hence $\om_W(\bar\mu_d)= m(\mu)$.

If $\mu\in\Gamma_0$ and $i\in \Gamma_1$ is incident with $\mu$, then $i$ is truncated at $\mu$ if and only if $\bar i$ is truncated at $\bar \mu$ if and only if $\bar i_g$ is truncated at  $\bar \mu_d$, for all $g\in G$
and $d\in \D_{\bar \mu}$, since $\val(\mu)m(\mu)=\val(\bar\mu)\om(\bar\mu)=\val(\bar\mu_d)\om_W(\bar\mu_d)$.
Hence, $(i,\mu)\in\X_{\Gamma}$ if and only if $(\bar i,\bar\mu)\in \X_{\overline{\Gamma}}$ if and only if $(\bar i_g,\bar\mu_d)\in\X_{\overline{\Gamma}_W}$, for all $g\in G$ and $d\in \D_{\bar\mu}$.  Recall from Section~\ref{section:iso} that $\Y=\{i\in\Gamma_1
\mid i \text{ is not truncated at either of its endpoints}\}$, and by Lemma~\ref{lemma:pickE}, we can choose a set function $\E_1\colon
\Y\to\Gamma_0$ so that $\E_1(i)$ is an endpoint of $i\in\Gamma_1$ and
$(\E_1(i))^g=\E_1(i^g)$, for all $g\in G$.  Let $i\in \Gamma_1$ be incident with vertex $\mu\in\Gamma_0$,
$g\in G$, and $d\in \D_{\bar\mu}$.  From Section~\ref{section:q},
$\oq_W(\bar i_g,\bar\mu_d)=\oq(\bar i,\bar\mu)$ and from Section~\ref{section:iso}, $\oq(\bar i,\bar\mu)=\frac{q_{i,\mu}}{q_{i,\E_1(i)}}$. It is now immediate that there is a correspondence between relations of
type one in $K\mathcal{Q}_{\Gamma}$ and relations of type one in
$K\mathcal{Q}_{\overline{\Gamma}_W}$.  The reader may check the correspondences
for relations of types two and three.  Thus, the associated Brauer graph algebras $\A_{\Gamma}$ and $\A_{\overline{\Gamma}_W}$ are isomorphic.
\end{proof}

\section{Applications}\label{section:appl}

In this section we provide a number of applications of the theory.
These applications lead to two of the theorems announced in the
Introduction, namely the classification of the coverings of Brauer
graph algebras that are again Brauer graph algebras
(Theorem~\ref{thm:covering}), and the fact that any Brauer graph can
be covered by a tower of coverings, the topmost of which is a Brauer
graph with no loops, no multiple edges and whose multiplicity
function is identically one, that is, there are no exceptional
vertices (Theorem~\ref{thm:final}).

\subsection{}\label{subsect:converse}
Our first application classifies the coverings of Brauer graph algebras that are again Brauer graph algebras.

Let $(\Delta,\mo,m,q)$ be a quantized Brauer graph and $W\colon \Z_{\Delta}\to G$ be a Brauer weighting for some
finite abelian group $G$.  As in Section~\ref{section:q}, we let $W^*\colon (\mathcal{Q}_{\Delta})_1\to G$ be the weight
function induced by $W$.  Theorem~\ref{thm:w*} shows that the covering algebra obtained from $W^*$ is a Brauer graph algebra.
We now show the converse, that is, if the covering algebra of $\A_{\Delta}$, obtained from a weight function on the
arrows of ${\mathcal Q}_\Delta$, is a Brauer graph algebra, then it is isomorphic to a covering algebra
obtained from a Brauer weighting of $(\Delta,\mo,m,q)$.

Let $G$ be a finite abelian group and
$W^*\colon (\mathcal{Q}_{\Delta})_1\to G$ a weight function such that $I_{\Delta}$ is generated by weight homogeneous
elements.  Let $i$ be an edge in $\Delta$ which is not truncated at either of its endpoints,  $\mu$ and $\nu$. Let
$C_{i,\mu}$ and $C_{i,\nu}$ be the corresponding cycles associated to the
edge $i$.  Then $x=q_{i,\mu}C_{i,\mu}^{m(\mu)}-q_{i,\nu}C_{i,\nu}^{m(\nu)}$
is a relation of type one in  $I_{\Delta}$. We note, by the nature of the generating
relations of $I_{\Delta}$, that $x$ must be homogeneous, that is, $W^*(C_{i,\mu}^{m(\mu)})= W^*(C_{i,\nu}^{m(\nu)})$.
We also note that $W^*(C_{i,\mu})=\omega_{\mu}$.  If $\omega_{\mu}^{m(\mu)}$
is not $\id_G$, then, in the covering algebra, the liftings of $x$ to $K\mathcal Q_{W^*}$ induce relations
which are differences of paths that are not cycles.  These differences are minimal relations for $I_{W^*}$.
It follows that the covering algebra associated to $W^*$ has minimal
generating relations that are not of types one, two, or three, and hence the covering algebra associated
 to $W^*$ is not a Brauer graph algebra.

On the other hand, if $\omega_{\mu}^{m(\mu)}=\id_G$, we define the successor weighting $W\colon
\Z_{\Delta}\to G$ as follows.  If $(i,j)\in\Z_{\mu}$, that is, if $j$ is the successor
of $i$ at vertex $\mu$, then we define $W((i,j))=W^*(a)$, where $a$ is the arrow
in $\mathcal{Q}_{\Delta}$ associated to $(i,j)$.  Since $\omega_{\mu}^{m(\mu)}=\id_G$,
we see that $\ord(\mu)\mid m(\mu)$, and hence $W$ is the desired Brauer weighting.

Summarizing, we have the following result.

\begin{theorem}\label{thm:covering} Let $(\Delta,\mo,m,q)$ be a quantized Brauer graph with
associated Brauer graph algebra $\A_{\Delta}=K\Q_{\Delta}/I_{\Delta}$ and let $G$ be a finite abelian group.
\begin{enumerate}
\item If $W\colon \Z_{\Delta}\to G$ is a Brauer weighting of $(\Delta,\mo,m,q)$
then there is an associated weight function $W^*:(\Q_\Delta)_1\to G$ such that
$I_{\Delta}$ is generated by weight homogeneous elements and the covering
algebra $(\A_{\Delta})_{W^*}$ is isomorphic to the Brauer covering algebra
$\A_{\Delta_W}$ associated to $W$.  Moreover, for each edge $i$ in $\Delta$,
which is not truncated at either of its endpoints $\mu$ and $\nu$, we have
that $W^*(C_{i,\mu}^{m(\mu)})=\id_G=W^*(C_{i,\nu}^{m(\nu)})$.
\item  Suppose that $W^*:(\Q_\Delta)_1\to G$ is a weight function such
that $I_{\Delta}$ is generated by weight homogeneous elements, and, for each
edge $i$ in $\Delta$ which is not truncated at either of its endpoints $\mu$
and $\nu$, we have that  $W^*(C_{i,\mu}^{m(\mu)})=\id_G=W^*(C_{i,\nu}^{m(\nu)})$.
Then there is a Brauer weighting $W$ of $(\Delta,\mo,m,q)$ such that the covering algebra
$(\A_{\Delta})_{W^*}$ is isomorphic to the Brauer covering algebra $\A_{\Delta_W}$ associated to $W$.
\item If $W^*:(\Q_\Delta)_1\to G$ is a weight function such that $I_{\Delta}$ is
generated by weight homogeneous elements and there is an edge $i$ in $\Delta$
which is not truncated at both endpoints $\mu$ and $\nu$, such that
$W^*(C_{i,\mu}^{m(\mu)})\ne \id_G$, then the covering algebra
    $(\A_{\Delta})_{W^*}$ is not isomorphic to a Brauer graph algebra.
\end{enumerate}
\end{theorem}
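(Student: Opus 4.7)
The plan is to assemble the three parts from material already developed in the paper, with the genuinely new content appearing in part (3).

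For part (1), I would define $W^*\colon(\Q_\Delta)_1\to G$ by $W^*(a):=W(i,j)$ whenever $a$ is the arrow of $\Q_\Delta$ associated to the successor pair $(i,j)\in\Z_\Delta$. Lemma~\ref{lemma:pathshomo} yields $W^*(C_{i,\mu}^{m(\mu)})=\omega_\mu^{m(\mu)}=\id_G$ for every $\mu$, and the Corollary following it then gives that $I_\Delta$ is generated by weight homogeneous elements. The isomorphism $(\A_\Delta)_{W^*}\cong\A_{\Delta_W}$ is precisely the statement of Theorem~\ref{thm:w*}, which completes (1).

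For part (2), going in the opposite direction, I would define $W\colon\Z_\Delta\to G$ by $W((i,j)):=W^*(a)$ for every pair $(i,j)\in\Z_\Delta$ such that $i$ is not truncated at the common vertex (so that $a$ exists), and by $W((i,j)):=\id_G$ on the remaining pairs, which necessarily satisfy $\val(\mu)=m(\mu)=1$. For every vertex $\mu$ which is the endpoint of some non-truncated edge $i$, the hypothesis $W^*(C_{i,\mu}^{m(\mu)})=\id_G$ translates into $\omega_\mu^{m(\mu)}=\id_G$, and for the remaining vertices $\omega_\mu=\id_G$ and $m(\mu)=1$ by the convention above. In either case $\ord(\mu)\mid m(\mu)$, so $W$ is a Brauer weighting by Definition~\ref{dfn:multGammaw}. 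Applying part (1) to this $W$ then identifies $(\A_\Delta)_{W^*}$ with the Brauer covering algebra $\A_{\Delta_W}$.

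The main obstacle is part (3), which requires exhibiting an obstruction in the covering algebra that no Brauer graph algebra can carry. Let $x=q_{i,\mu}C_{i,\mu}^{m(\mu)}-q_{i,\nu}C_{i,\nu}^{m(\nu)}\in I_\Delta$ be a type one relation with $W^*(C_{i,\mu}^{m(\mu)})\neq\id_G$, and fix a lift $\tilde x\in K\Q_{W^*}$ starting at a chosen vertex above $v_i$. Since $W^*(C_{i,\mu}^{m(\mu)})\neq\id_G$ and also $W^*(C_{i,\mu}^{m(\mu)})=W^*(C_{i,\nu}^{m(\nu)})$ (this last equality being forced by the weight homogeneity of $x$), the two summands of $\tilde x$ are paths of length $\geq 2$ whose common source lies above $v_i$ but whose targets lie above $v_i$ \emph{at the same nontrivial translate} — in particular neither summand is a cycle, and neither individually lies in $I_{W^*}$. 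Because the free $G$-action lifts a minimal generating set of the $G$-graded ideal $I_\Delta$ to a minimal generating set of $I_{W^*}$ (a standard covering-theoretic fact recalled in the Appendix), the lift $\tilde x$ must occur in every minimal generating set of $I_{W^*}$. However, in any Brauer graph algebra the type one, two, and three relations form a minimal generating set in which no element is a difference of two non-closed paths; comparing with $\tilde x$ yields the required contradiction. The technical hinge — and the step I expect to be the most delicate — is this last assertion that lifting along a free action respects minimal generation, which I would verify by a direct radical-filtration argument on the $G$-graded vector space $I_\Delta/(\Rad\,\A_\Delta\cdot I_\Delta+I_\Delta\cdot\Rad\,\A_\Delta)$ and its lift to $I_{W^*}$.
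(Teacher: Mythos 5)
Your proposal reproduces the paper's own argument in Section~\ref{subsect:converse} essentially step for step: part (1) via Lemma~\ref{lemma:pathshomo}, its corollary and Theorem~\ref{thm:w*}; part (2) by transporting $W^*$ back to a successor weighting and reading off $\ord(\mu)\mid m(\mu)$ from the hypothesis $W^*(C_{i,\mu}^{m(\mu)})=\id_G$; and part (3) by observing that a lift of a type one relation becomes a minimal relation which is a difference of two non-closed paths, hence not of types one, two or three. The only deviations are elaborations of points the paper leaves implicit, namely your explicit treatment of the truncated successor pairs in (2) and the proposed radical-filtration verification that the lifted relation is minimal in (3).
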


\subsection{}\label{subsect:coverings}
Our next application deals with module categories.

Suppose that $(\Delta,\mo,m,q)$ is a quantized Brauer graph,
$\A_{\Delta}$ the associated Brauer graph algebra,
and $W\colon \Z_{\Delta}\to G$ a Brauer weighting for some finite
abelian group $G$. Let $\A_{{\Delta}_W}$ be the covering algebra associated to $W$ and
let $W^*\colon(\mathcal{Q}_{\Delta})_1\to G$ be the weight function
induced by $W$.  By covering theory, $W^*$ induces a $G$-grading on
$\A_{\Delta}$.   Then the following result holds; see Appendix, Theorem~\ref{thm:app-graded}.

\begin{theorem}\label{thm:graded-modules} Keeping the above notation, the category of $G$-graded
$\A_{\Delta}$-modules is equivalent to the category of
$\A_{{\Delta}_W}$-modules.
\end{theorem}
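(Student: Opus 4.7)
The plan is to invoke the general covering-theoretic equivalence recalled in the Appendix (Theorem~\ref{thm:app-graded}); all the algebraic data has been arranged so that it applies directly, and the proof is essentially an assembly argument.

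First I would recall the $G$-grading picture. The weight function $W^*\colon (\Q_\Delta)_1\to G$ places $K\Q_\Delta$ in the category of $G$-graded $K$-algebras, where each vertex has degree $\id_G$, each arrow $a$ has degree $W^*(a)$, and a path $a_1a_2\cdots a_\sigma$ has degree $W^*(a_1)W^*(a_2)\cdots W^*(a_\sigma)$. The key point, which was established in the corollary following Lemma~\ref{lemma:pathshomo}, is that $I_\Delta$ is generated by $W^*$-homogeneous elements: the type-two and type-three relations are monomial and hence automatically homogeneous, while for each type-one relation $q_{i,\mu}C_{i,\mu}^{m(\mu)}-q_{i,\nu}C_{i,\nu}^{m(\nu)}$ the two terms have degree $\omega_\mu^{m(\mu)}=\id_G$ and $\omega_\nu^{m(\nu)}=\id_G$ respectively (the equalities hold because $W$ is a Brauer weighting, so $\ord(\mu)\mid m(\mu)$ and $\ord(\nu)\mid m(\nu)$). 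Thus the $G$-grading descends to $\A_\Delta$.

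Next I would match this setup with the general covering construction of the Appendix. Given a weight function on the arrows of a quiver whose ideal of relations is generated by weight-homogeneous elements, the Appendix constructs the covering quiver $\Q_{W^*}$ and the pulled-back ideal $I_{W^*}=\pi^{-1}(I_\Delta)$, and defines $(\A_\Delta)_{W^*}=K\Q_{W^*}/I_{W^*}$. Theorem~\ref{thm:w*} of the present paper identifies this covering algebra with the Brauer graph algebra $\A_{\Delta_W}$ of the Brauer covering graph; concretely, the isomorphism sends $v_g$ to the idempotent at $(v_i)_g$ and $a_g$ to the corresponding arrow in $\Q_{\Delta_W}$. Hence $\A_{\Delta_W}\cong (\A_\Delta)_{W^*}$ as $K$-algebras.

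Finally, Theorem~\ref{thm:app-graded} of the Appendix asserts that for any $G$-graded quotient of a path algebra with $G$ finite abelian, the category of $G$-graded modules over the base algebra is equivalent to the module category of the associated covering algebra. Applying this to $\A_\Delta$ with the grading induced by $W^*$ and using the identification $(\A_\Delta)_{W^*}\cong \A_{\Delta_W}$ yields the desired equivalence between $G$-graded $\A_\Delta$-modules and $\A_{\Delta_W}$-modules. The only real substance in the argument is the homogeneity check already performed in Section~\ref{section:q}; no genuine obstacle arises, since the Brauer weighting condition $\ord(\mu)\mid m(\mu)$ was designed precisely to make the type-one relations homogeneous and thereby render the Appendix's theorem applicable.
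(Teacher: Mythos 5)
Your proposal is correct and follows essentially the same route as the paper: the paper proves this theorem precisely by noting that $W^*$ induces a $G$-grading on $\A_\Delta$ (via the homogeneity of the relations established after Lemma~\ref{lemma:pathshomo}), identifying $\A_{\Delta_W}$ with the covering algebra $K\Q_{W^*}/I_{W^*}$ via Theorem~\ref{thm:w*}, and then citing Theorem~\ref{thm:app-graded} from the Appendix. The only trivial quibble is that Theorem~\ref{thm:app-graded} holds for any finite group, not just abelian ones, so your parenthetical restriction is unnecessary but harmless.
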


Furthermore, if $S$ is a simple $\A_{\Delta}$-module then $S$ is gradable.
If $\tilde S$ is a graded simple $\A_{\Delta}$-module which forgets to
$S$, then a minimal graded projective $\A_{\Delta}$-resolution of $\tilde S$
forgets to a  minimal projective $\A_{\Delta}$-resolution of $S$.
By the above theorem, the minimal graded projective $\A_{\Delta}$-resolutions
are precisely the minimal projective $\A_{{\Delta}_W}$-resolutions.  Consequently,
some cohomological questions related to $\A_{\Delta}$-modules
can be translated to cohomological questions concerning $\A_{\Delta_W}$-modules,
for example, see \cite[Theorem 3.2]{GHS08}.

\subsection{}\label{subsection:towers}
In this section, we consider a ``tower'' of coverings.  More
precisely, suppose that $(\Delta,\mo,m,q)$ is a quantized Brauer
graph and $\A_{\Delta}$ is the associated Brauer graph algebra. Let
$W_1\colon \Z_{\Delta}\to G_1$ be a Brauer weighting for some finite
abelian group $G_1$ and set $(\Delta_1,\mo_1,m_1,q_1)$ to be the
associated quantized Brauer covering graph associated to $W_1$.  Let
$n$ be some positive integer. \sloppy We say that
$(\Delta,\mo,m,q),(\Delta_1,\mo_1,m_1,q_1),\dots,(\Delta_n,\mo_n,m_n,q_n)$
is a \emph{tower of quantized Brauer covering graphs} if, for
$1<i\le n$, there are Brauer weightings $W_i\colon \Z_{\Delta_{i-1}}\to
G_i$ for finite abelian groups $G_i$, where
$(\Delta_i,\mo_i,m_i,q_i)$ is the quantized Brauer covering graph
associated to $W_i$. Applying Theorem \ref{thm:graded-modules} we
have the following result.

\begin{theorem}\label{prop:tower_result}
Let $n$ be a positive integer and
$(\Delta_0,\mo_0,m_0,q_0),(\Delta_1,\mo_1,m_1,q_1),\dots,(\Delta_n,\mo_n,m_n,q_n)$
be a tower of quantized Brauer covering graphs associated to Brauer
weightings $W_i\colon \Z_{\Delta_{i-1}}\to G_i$, for some finite abelian groups $G_i$. For $i=0,\dots, n$,
let $\A_{\Delta_i}$ be the Brauer graph algebra associated to
$(\Delta_i,\mo_i,m_i,q_i)$ and let $\Mod(\A_{\Delta_i})$ denote the
category of $\A_{\Delta_i}$-modules.  For $i=1, \dots, n$, let
${\mathcal F}_i\colon \Mod(\A_{\Delta_i})\to \Mod(\A_{\Delta_{i-1}})$ be the
forgetful functor and set ${\mathcal G} = {\mathcal F}_1{\mathcal F}_2\cdots {\mathcal F}_n \colon\Mod(\A_{\Delta_n})\to
\Mod(\A_{\Delta_0})$. Then the following properties hold.
\begin{enumerate}[(i)]
\item If $S$ is a simple $\A_{\Delta_0}$-module then there is a simple
$\A_{\Delta_n}$-module $T$ such that ${\mathcal G}(T)\cong S$.
Moreover, if \[\mathcal P:\cdots \to P^2\to P^1\to P^0\to T\to 0\]
is a minimal projective $\A_{\Delta_n}$-resolution of $T$ then, applying the exact functor ${\mathcal G}$ to  $\mathcal P$, gives a minimal projective $\A_{\Delta_0}$-resolution of $S$.
\item If $S'$ is a simple $\A_{\Delta_n}$-module, then ${\mathcal G}(S')$ is a simple
$\A_{\Delta_0}$-module.
\end{enumerate}
\end{theorem}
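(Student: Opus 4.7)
The plan is to reduce the theorem to the case $n=1$ by induction and then invoke Theorem~\ref{thm:graded-modules} together with standard facts from covering theory surveyed in the Appendix.

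For the base case $n=1$, Theorem~\ref{thm:graded-modules} identifies $\Mod(\A_{\Delta_1})$ with the category of $G_1$-graded $\A_{\Delta_0}$-modules in such a way that $\mathcal{F}_1$ corresponds to forgetting the grading. A simple $\A_{\Delta_0}$-module $S$ is of the form $S(v)$ for some vertex $v$ of $\Q_{\Delta_0}$, and so is concentrated in a single vertex; we grade it by declaring $e_vS$ to be homogeneous of degree $\id_{G_1}$ to obtain a graded simple $\tilde S$. The corresponding $\A_{\Delta_1}$-module $T$ is then simple and satisfies $\mathcal{F}_1(T)\cong S$, giving the existence in (i). For the resolution part, I would use that $\mathcal{F}_1$ is exact and carries (graded) projective covers to projective covers: a minimal graded projective resolution of $\tilde S$ forgets to a minimal projective resolution of $S$, because the differentials land in the (graded) radical and the radical of a graded module agrees with its ordinary radical after forgetting. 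For (ii) with $n=1$, any simple $\A_{\Delta_1}$-module corresponds under the equivalence to a graded simple $\A_{\Delta_0}$-module, which is necessarily concentrated at a single vertex of $\Q_{\Delta_0}$ and hence forgets to a simple $\A_{\Delta_0}$-module.

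For the inductive step, suppose the theorem holds for towers of length $n-1$. Given a simple $\A_{\Delta_0}$-module $S$, the inductive hypothesis applied to the tower $(\Delta_0,\dots,\Delta_{n-1})$ produces a simple $\A_{\Delta_{n-1}}$-module $T'$ with $\mathcal{F}_1\cdots\mathcal{F}_{n-1}(T')\cong S$, compatibly with minimal projective resolutions. Applying the base case to the single covering associated to $W_n$ yields a simple $\A_{\Delta_n}$-module $T$ with $\mathcal{F}_n(T)\cong T'$ whose minimal projective resolution forgets to that of $T'$. Composing forgetful functors gives $\mathcal{G}(T)\cong S$, and since $\mathcal{G}$ is an exact composition of exact functors that each preserve minimality of projective resolutions, applying $\mathcal{G}$ to the minimal projective resolution of $T$ produces a minimal projective resolution of $S$. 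Part (ii) follows in the same way: $\mathcal{F}_n(S')$ is simple by the base case, and then $\mathcal{G}(S')=(\mathcal{F}_1\cdots\mathcal{F}_{n-1})(\mathcal{F}_n(S'))$ is simple by the inductive hypothesis.

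The main obstacle is the minimality statement in (i), which really amounts to checking that each forgetful functor $\mathcal{F}_i$ carries projective covers to projective covers, equivalently that it preserves the radical of a finitely generated projective module. This is standard covering theory but needs careful bookkeeping at each level of the tower; once established for $n=1$ it propagates cleanly through the induction because composing minimal projective resolutions of covers gives a minimal projective resolution, and exactness of each $\mathcal{F}_i$ is immediate since forgetting a $G_i$-grading does not alter kernels or images computed in $\Mod(\A_{\Delta_{i-1}})$.
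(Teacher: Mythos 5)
Your proposal is correct and follows essentially the same route as the paper, which states the theorem as a direct consequence of Theorem~\ref{thm:graded-modules} together with the observations in \S\ref{subsect:coverings} that simple modules are gradable and that minimal graded projective resolutions forget to minimal projective resolutions; your induction on the length of the tower is just the explicit iteration of that one-step argument. The only point you flag as needing care---that each forgetful functor preserves projective covers---is exactly the standard covering-theoretic fact the paper also relies on without further proof.
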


As an immediate consequence of Theorem \ref{prop:tower_result}, we have that if
$(\Delta_0,\mo_0,m_0,q_0)$, $(\Delta_1,\mo_1,m_1,q_1), \dots, (\Delta_n,\mo_n,m_n,q_n)$
is a tower of quantized Brauer covering graphs associated to Brauer
weightings $W_i\colon \Z_{\Delta_{i-1}}\to G_i$ with associated
quantized Brauer graph algebras $\A_{\Delta_i}$, then some
homological questions related to $\A_{\Delta_0}$-modules can be
translated to homological questions concerning
$\A_{\Delta_n}$-modules. In particular, this is applied in \cite{GSST} to determine the Ext algebra of a Brauer graph algebra.

\subsection{}\label{subsect:mult}
The next three applications show how to construct coverings of
Brauer graphs with specific properties.  We begin by considering multiplicities.

Suppose that $(\Delta,\mo,m)$ is a Brauer graph for which at least one
vertex $\mu$ has $m(\mu)>1$.  Let $n=\text{lcm}\{m(\mu)\mid \mu\in\Delta_0\}$ and
$\mathbb Z_n=\langle g\mid g^n=\id\rangle$ be the cyclic group of order $n$.
We define $W\colon\Z_{\Delta}\to \mathbb Z_n$ as follows. For each $\mu\in\Delta_0$,
choose one edge, $i_{\mu}$, incident with $\mu$. Let $(i,j)\in\Z_{\mu}$, and define
\[
W((i,j))=\begin{cases} g^{\frac{n}{m(\mu)}}& \text{ if }i=i_{\mu}\\
\id&\text{ otherwise}. \end{cases}
\]
We see that $W$ is a Brauer weighting since $\omega_{\mu}=g^{\frac{n}{m(\mu)}}$, and hence $\ord(\mu)= m(\mu)$.
If $(\Delta_W,\mo_W,m_W)$ is the Brauer covering graph associated to
$W$, then, for each vertex $\mu_d$ in $\Delta_W$,
$m_W(\mu_d)=m(\mu)/\ord(\mu)=1$.  It follows that $(\Delta_W,\mo_W,m_W)$
has the desired property.

Noting that if $(\Delta,\mo,m,q)$ is a quantized Brauer graph, the above arguments can be extended to the following result.

\begin{proposition}\label{prop:multiplicity} Let $(\Delta,\mo,m,q)$ be a quantized Brauer graph.
Then there is a finite abelian group $G$ and a
Brauer weighting $W\colon\Z_{\Delta}\to G$ of $(\Delta,\mo,m,q)$ so that the quantized
Brauer covering graph $(\Delta_W,\mo_W,m_W,q_W)$ satisfies $m_W(\mu)=1$, for all vertices
$\mu$ in $\Delta_W$.
\end{proposition}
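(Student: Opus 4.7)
The plan is to follow the construction already sketched in the paragraph preceding the proposition, namely to take $G$ to be a cyclic group whose order is divisible by every multiplicity, and to define a successor weighting which, at each vertex $\mu$, is concentrated on one distinguished incident edge and takes the value $g^{n/m(\mu)}$ there. The task then reduces to verifying two things: that this is indeed a Brauer weighting, and that the resulting multiplicity function $m_W$ is identically one. The extension to quantized Brauer graphs requires no extra work since the quantizing function $q_W$ on the Brauer covering graph has already been defined in Section~\ref{section:q} via $q_W((i_g,\mu_d))=q_{i,\mu}$.

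First I would set $n=\mathrm{lcm}\{m(\mu)\mid\mu\in\Delta_0\}$ and $G=\mathbb{Z}_n=\langle g\mid g^n=\id\rangle$. For each vertex $\mu\in\Delta_0$, I choose one incident edge $i_\mu$ (where loops are viewed as two edges, so the choice is among the edges appearing in the successor sequence at $\mu$), and then define
\[
W((i,j))=\begin{cases} g^{n/m(\mu)} & \text{if } i=i_\mu,\\ \id & \text{otherwise,}\end{cases}
\]
for $(i,j)\in\Z_\mu$. The key computation is that since the successor sequence of $i_\mu$ at $\mu$ cycles through every edge incident with $\mu$ exactly once (with loops counted twice), the pair $(i_\mu,j)$ appears in $\Z_\mu$ for a unique successor $j$ of $i_\mu$. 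Therefore
\[
\omega_\mu=\prod_{(i,j)\in\Z_\mu} W((i,j)) = g^{n/m(\mu)},
\]
which has order exactly $m(\mu)$ in $G$. Hence $\ord(\mu)=m(\mu)$ divides $m(\mu)$ trivially, so $W$ is a Brauer weighting in the sense of Definition~\ref{dfn:multGammaw}(1).

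Applying Definition~\ref{dfn:multGammaw}(2) we obtain, for every vertex $\mu_d\in(\Delta_W)_0$,
\[
m_W(\mu_d)=m(\mu)/\ord(\mu)=m(\mu)/m(\mu)=1,
\]
which is the conclusion. For the quantized version, we simply equip $(\Delta_W,\mo_W,m_W)$ with the canonical quantizing function $q_W$ defined in Section~\ref{section:q}; the resulting quantized Brauer covering graph $(\Delta_W,\mo_W,m_W,q_W)$ has the desired property.

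The only point requiring some care, and the place I would check most carefully, is the case of vertices of valency one (where a single edge is its own successor, so $\omega_\mu=W((i_\mu,i_\mu))$) and the case of loops (where the two occurrences of the loop must be consistently treated in the successor sequence). In both cases the formula $\omega_\mu=g^{n/m(\mu)}$ still holds because the distinguished edge $i_\mu$ contributes the single nontrivial factor, and all other pairs in $\Z_\mu$ contribute $\id$. No other obstacle is expected: the remaining properties of $q_W$ have already been established in Section~\ref{section:q}, so once $W$ is shown to be a Brauer weighting the proposition follows directly from the general theory developed there.
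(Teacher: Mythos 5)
Your proposal is correct and is essentially identical to the paper's own argument in Section~\ref{subsect:mult}: the same choice of $n=\mathrm{lcm}\{m(\mu)\}$, the same cyclic group $\mathbb{Z}_n$, the same weighting concentrated on one distinguished edge $i_\mu$ with value $g^{n/m(\mu)}$, and the same computation $\omega_\mu=g^{n/m(\mu)}$ giving $\ord(\mu)=m(\mu)$ and hence $m_W\equiv 1$. Your extra remarks on loops and valency-one vertices are a sensible sanity check but do not change the argument.
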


\subsection{}\label{subsect:loop}
Our next application is the removal of loops.

Let $(\Delta,\mo,m)$ be a Brauer graph with $n$ loops, labeled
$\ell_1, \dots, \ell_n$. Let $p$ be an integer greater than $1$ and
let $G=\prod_{k=1}^n\mathbb Z_p$, where $\mathbb Z_p$ is the cyclic
group of order $p$ with generator $g$. For $k=1,\dots,n$, let
$z_k=(\id,\dots,\id,g,\id,\dots,\id)\in G$, where $g$ occurs in the
$k$-th component. For each vertex $\mu$ in $\Delta$, choose one
edge, $i_{\mu}$, incident with $\mu$.

We now define a Brauer weighting $W\colon\Z_{\Delta}\to G$ as
follows. Suppose that $(i,j)\in\Z_{\mu}$.  If $i$ is not a loop,
then set $W((i,j))=\id_G$.  If $i$ is the loop $\ell_k$  and the
edge $i$ is the first occurrence of $i$ in the successor sequence of
$i_{\mu}$, then set $W((i,j))=z_k$.  If $i$ is the second occurrence
of $i$ in the successor sequence of $i_{\mu}$, then set
$W((i,j))=z_k^{-1}$. It is immediate that $\omega_{\mu}=\id_G$, for
each vertex $\mu$, and hence $W$ is, in fact, a Brauer weighting.

Let $(\Delta_W,\mo_W,m_W)$ be the Brauer covering graph with respect
to $W$.  For each vertex $\mu$, since $\omega_{\mu}=\id_G$  and the
order of $G$ is $p^n$, there are $p^n$ cosets of $H_{\mu}$ in $G$, each
of which contains a single element.
Thus, for each vertex $\mu$ in $\Delta$, we have $p^n$ vertices
$\mu_{[i,H_{\mu,1}]},\dots,\mu_{[i,H_{\mu,p^n}]}$ in $\Delta_W$, where
$i$ is an edge incident with $\mu$. These vertices are independent of the
choice of $i$, see Section~\ref{section:weight}. To
show there are no loops in $\Delta_W$, consider the loop $\ell_k$ at the
vertex $\nu$ in $\Delta$. For ease of notation we set $\ell=\ell_k$.
Let $h\in G$ and consider $\ell_h$ and $\hat\ell_h$, where $\ell$ and
$\hat\ell$ are the first and second occurrences of $\ell$ in the successor
sequence of $i_{\nu}$. We show that $\ell_h$ is not a loop in
$\Delta_W$. A similar argument shows that $\hat\ell_h$ is also not a loop.

From the remark after Definition~\ref{dfn:graphGammaw},
the edge $\ell_h$ has endpoints $\nu_{[\ell,H_{\nu,s}]}$ and
$\nu_{[\hat\ell,H_{\nu,s}]}$, where $H_{\nu,s}$ is the unique coset of
$H_{\nu}$ in $G$ containing $h$.  From Lemma~\ref{lemma:omega},
$[\ell,H_{\nu,s}]=[\hat\ell,H_{\nu,s}]$ if and only if $H_{\nu,s}=
H_{\nu,s}\omega(\ell,\hat\ell)\omega_{\nu}^{\theta}$, for some $0\le
\theta<\ord(\nu)$.  But $\omega_{\nu}=\id_G$, and, from the definition
of $W$,  $\omega(\ell,\hat\ell)$ is not $\id_G$.  Since $H_{\nu,s}=\{h\}$, we see
that $\ell_h$ is not a loop in $\Delta_W$.

Note that we could have taken $G$ to be any finite product of $n$ non-trivial
abelian groups in the above construction and each $z_k$ to be an $n$-tuple
with non-identity element in the $k$-th component and identity elements in all
other components.

Summarizing, we have the following result.

\begin{proposition}\label{prop:loop} Let $(\Delta,\mo,m,q)$ be a quantized
Brauer graph.  Then there is a finite abelian group $G$ and a
Brauer weighting $W\colon\Z_{\Delta}\to G$ of $(\Delta,\mo,m,q)$ so that
the quantized Brauer covering graph $(\Delta_W,\mo_W,m_W,q_W)$ has the property
that $\Delta_W$ contains no loops.
\end{proposition}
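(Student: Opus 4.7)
The plan is to construct an explicit Brauer weighting---using one cyclic factor per loop---that separates the two ends of every loop when lifted to the covering. First I would enumerate the loops $\ell_1,\dots,\ell_n$ of $\Delta$ and take $G = \prod_{k=1}^{n}\mathbb{Z}_p$ for any integer $p>1$, with $z_k\in G$ denoting the tuple having a generator in the $k$-th slot and identity elsewhere. For each vertex $\mu$ I would fix a distinguished incident edge $i_\mu$; this is what allows us to speak of the ``first'' versus ``second'' occurrence of a loop in the successor sequence at $\mu$, as in Section~\ref{section:notation}.

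Next I would define $W\colon\Z_\Delta\to G$ on $(i,j)\in\Z_\mu$ by setting $W((i,j))=\id_G$ unless $i$ is a loop $\ell_k$, in which case $W((i,j))=z_k$ if $i$ is the first occurrence of $\ell_k$ in the successor sequence of $i_\mu$ and $W((i,j))=z_k^{-1}$ if it is the second. The two occurrences of each loop contribute $z_k$ and $z_k^{-1}$ to $\omega_\mu$ and all other terms are trivial, so $\omega_\mu=\id_G$ for every $\mu$. Hence $\ord(\mu)=1$ divides $m(\mu)$, which shows $W$ is a Brauer weighting; moreover each subgroup $H_\mu\subseteq G$ is trivial, so every coset is a singleton.

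The crux is then verifying that $\Delta_W$ has no loops. For a loop $\ell=\ell_k$ at a vertex $\nu$ and any $h\in G$, the remark following Definition~\ref{dfn:graphGammaw} identifies the endpoints of $\ell_h$ as $\nu_{[\ell,H_{\nu,s}]}$ and $\nu_{[\hat\ell,H_{\nu,s}]}$, where $H_{\nu,s}$ is the singleton coset containing $h$. By Lemma~\ref{lemma:omega}, these vertices coincide if and only if $H_{\nu,s}=H_{\nu,s}\,\omega(\ell,\hat\ell)\,\omega_\nu^{\theta}$ for some $0\le\theta<\ord(\nu)$; but $\omega_\nu=\id_G$ and, by construction, $\omega(\ell,\hat\ell)=z_k\neq \id_G$, so the equation fails and the two endpoints are distinct. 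A symmetric argument handles $\hat\ell_h$, and the quantization lifts automatically to $q_W$ via the prescription of Section~\ref{section:q}.

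I expect the only real bookkeeping to be tracking the first-versus-second occurrence of each loop in each successor sequence, which is handled cleanly by the choice of distinguished edges $i_\mu$. The construction is morally forced: to split a loop upstairs one must assign non-cancelling weights to the two occurrences of that loop, and the minimal way to do this while keeping $\omega_\mu=\id_G$ at every vertex is exactly to assign mutually inverse elements sitting in a cyclic factor dedicated to that loop.
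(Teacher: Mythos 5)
Your proposal is correct and follows essentially the same construction as the paper's own proof in Section~\ref{subsect:loop}: the same group $\prod_{k=1}^{n}\mathbb{Z}_p$, the same assignment of $z_k$ and $z_k^{-1}$ to the first and second occurrences of the loop $\ell_k$, and the same verification via Lemma~\ref{lemma:omega} that the two endpoints of $\ell_h$ are distinct. The only minor imprecision is your claim that $\omega(\ell,\hat\ell)=z_k$ --- if other loops at the same vertex interleave with $\ell_k$, this product can acquire factors in other components --- but its $k$-th component is still $g\neq\id_G$, so the needed conclusion $\omega(\ell,\hat\ell)\neq\id_G$ holds, which is exactly what the paper asserts.
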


\subsection{}\label{subsect:multiple}
Our final application is the removal of multiple edges. Let
$(\Delta_0,\mo_0,m_0,q_0)$ be a quantized Brauer graph. By
Proposition \ref{prop:loop} there is a finite abelian group $G_1$
and a Brauer weighting $W_1\colon\Z_{\Delta_0}\to G_1$ of
$(\Delta_0,\mo_0,m_0,q_0)$ so that the quantized Brauer covering
graph $(\Delta_{W_1},\mo_{W_1},m_{W_1},q_{W_1})$ has the property
that $\Delta_{W_1}$ contains no loops.

For simplicity of notation, let $(\Delta,\mo,m,q)=
(\Delta_{W_1},\mo_{W_1},m_{W_1},q_{W_1})$. Thus $\Delta$ contains no
loops. We say the pair of vertices $\{\mu,\nu\}$ is {\em
$\alpha$-marked} if $\alpha\ge 2$ and there are precisely $\alpha$
edges with endpoints $\mu$ and $\nu$.  List the $\alpha$-marked
pairs $\{\mu_1,\nu_1\},\dots,\{\mu_n,\nu_n\}$, where
$\{\mu_k,\nu_k\}$ is $\alpha_k$-marked.  Note that a vertex in
$\Delta$ can occur in more than one $\alpha$-marked pair. Let $G$ be
the product $G=\prod_{k=1}^n\mathbb Z_{\alpha_k}$, where $\mathbb
Z_{\alpha_k}$ is the cyclic group of order $\alpha_k$ with generator
$g_k$. For $k=1,\dots,n$, let
$z_k=(\id,\dots,\id,g_k,\id,\dots,\id)$, where $g_k$ occurs in the
$k$-th component. For each vertex $\mu$ in $\Delta$, choose one
edge, $i_{\mu}$, incident with $\mu$.  For each $\alpha$-marked pair
$\{\mu,\nu\}$ choose either $\mu$ or $\nu$ to be the {\em
distinguished vertex} of that pair.

We now define a successor weighting $W\colon\Z_{\Delta}\to G$ as
follows. Suppose that $(i,j)\in\Z_{\mu}$. If the endpoints of $i$
are the $\alpha$-marked pair $\{\mu_k,\nu_k\}$, and $\mu$ is the
distinguished vertex of this pair, then set $W((i,j))=z_k$. In all
other cases, set $W((i,j))=\id_G$. The reader may check that
$\omega_{\mu}=z_k^{\alpha_k}=\id_G$, for each vertex $\mu$, and hence $W$ is, in
fact, a Brauer weighting.

Let $(\Delta_W,\mo_W,m_W)$ be the Brauer covering graph associated
to $W$.  An argument similar to the one given in \ref{subsect:loop} shows
that $(\Delta_W,\mo_W,m_W)$ has the desired properties.

\begin{proposition}\label{prop:multedges} Let $(\Delta,\mo,m,q)$ be a
quantized Brauer graph such that $\Delta$ contains no loops.  Then
there is a finite abelian group $G$ and a
Brauer weighting $W\colon\Z_{\Delta}\to G$ of $(\Delta,\mo,m,q)$ so
that the quantized Brauer covering graph $(\Delta_W,\mo_W,m_W,q_W)$ has the property that $\Delta_W$
does not contain multiple edges between any two vertices.
\end{proposition}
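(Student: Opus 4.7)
The plan is to carry out the construction sketched in the preceding paragraphs: enumerate the $\alpha$-marked pairs $\{\mu_k,\nu_k\}$ of $\Delta$, set $G=\prod_{k=1}^n\mathbb{Z}_{\alpha_k}$ with $z_k$ placed in the $k$-th slot, choose a distinguished vertex for each pair, and define $W\colon \Z_\Delta\to G$ by $W((i,j))=z_k$ when $i$ lies in the $k$-th pair and the common vertex $\mu$ is distinguished for that pair, and $W((i,j))=\id_G$ otherwise. First I would verify that $W$ is a Brauer weighting: at a vertex $\mu$, the product $\omega_\mu$ accumulates one factor $z_k$ for each edge of the $k$-th pair incident with $\mu$ whenever $\mu$ is distinguished for that pair, and since all $\alpha_k$ edges of such a pair meet at $\mu$, those contributions combine to $z_k^{\alpha_k}=\id_G$; contributions from different pairs commute and sit in independent slots of $G$. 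Hence $\omega_\mu=\id_G$, so $\ord(\mu)=1$ trivially divides $m(\mu)$, and the induced $q_W$ of Section~\ref{section:q} equips $(\Delta_W,\mo_W,m_W,q_W)$ with its quantizing function.

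The structural consequence of $\omega_\mu=\id_G$ for every $\mu$ is that $H_\mu$ is trivial, so each coset $H_{\mu,s}$ is a singleton $\{h\}$. By Lemma~\ref{lemma:omega}, for $i,j$ incident with $\mu$ we have $[i,\{h\}]=[j,\{h'\}]$ in $\D_\mu$ if and only if $j=i_r$ in the successor sequence of $i$ at $\mu$ and $h'=h\,\omega(i_1,i_r)$. I would then assume that $\Delta_W$ contains two distinct edges $i_g$ and $j_{g'}$ sharing both endpoints and argue for a contradiction. If $i=j$, matching endpoints at $\mu$ gives $[i,\{g\}]=[i,\{g'\}]$; because $\Delta$ has no loops, $i$ appears only at position $1$ of its own successor sequence at $\mu$, so $\omega(i_1,i_r)=\id$ and hence $g=g'$, contradicting $i_g\ne j_{g'}$. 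If $i\ne j$ while sharing endpoints $\mu,\nu$ (necessarily distinct, again by the no-loops assumption), then $\{\mu,\nu\}$ is $\alpha_k$-marked for some $k$ with $i,j$ both in that pair, and matching both pairs of endpoints forces $g'=gA=gB$, where $A$ (respectively $B$) is the product of the weights $W(i_s,i_{s+1})$ along the successor sequence of $i$ at $\mu$ (respectively at $\nu$) from the starting edge $i$ up to but not including the step that reaches $j$; in particular $A=B$ in $G$.

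To finish, I would project the equation $A=B$ onto the $k$-th factor $\mathbb{Z}_{\alpha_k}$ of $G$. By the definition of $W$, a step $W((i_s,i_{s+1}))$ contributes to the $k$-th factor only when $i_s$ lies in the $k$-th pair and the traversed vertex is the distinguished one. Since exactly one of $\mu,\nu$ is distinguished for the $k$-pair, one of $A,B$ has $k$-component $\id$ while the other has $k$-component $g_k^c$, where $c$ counts the edges of the $k$-pair appearing strictly before $j$ in the successor sequence of $i$ at the distinguished vertex. Because $\Delta$ is loop-free, each of the $\alpha_k$ edges of the pair appears exactly once in that successor sequence, so $c\ge 1$ (the starting edge $i$ is counted) and $c\le \alpha_k-1$ (the terminal edge $j$ is excluded); hence $c\not\equiv 0\pmod{\alpha_k}$, contradicting $A=B$ and completing the proof that $\Delta_W$ contains no multiple edges.

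The only real obstacle is this final bookkeeping step: one must verify that the $k$-th component of the partial product $A$ lies strictly between $0$ and $\alpha_k$ modulo $\alpha_k$. The no-loops hypothesis is essential here, as it guarantees each of the $\alpha_k$ edges of the pair appears exactly once in the successor sequence at each endpoint, which is what pins $c$ into the interval $[1,\alpha_k-1]$. The rest of the argument is a bookkeeping parallel of the loop-removal argument in Section~\ref{subsect:loop}.
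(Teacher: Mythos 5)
Your proposal is correct and follows essentially the same route as the paper: the same group $G=\prod_k\mathbb{Z}_{\alpha_k}$, the same distinguished-vertex weighting with $\omega_\mu=\id_G$, and the same endpoint-matching analysis via Lemma~\ref{lemma:omega} that the paper invokes only implicitly by saying the verification is ``similar to the one given in \ref{subsect:loop}''. Your final bookkeeping step (projecting $A=B$ onto the $k$-th factor and pinning the exponent $c$ into $[1,\alpha_k-1]$ using the no-loops hypothesis) correctly supplies the detail the paper leaves to the reader.
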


\subsection{}
We combine Propositions~\ref{prop:multiplicity}, \ref{prop:loop}
and \ref{prop:multedges} to obtain our final result.

\begin{theorem}\label{thm:final}
Let $(\Delta_0,\mo_0,m_0,q_0)$ be a quantized Brauer graph.  Then
there is a tower of quantized Brauer covering graphs
$(\Delta_0,\mo_0,m_0,q_0),
(\Delta_1,\mo_1,m_1,q_1),(\Delta_2,\mo_2,m_2,q_2),(\Delta_3,\mo_3,m_3,q_3)$
such that the quantized Brauer covering graph
$(\Delta_3,\mo_3,m_3,q_3)$ has the following properties:
\begin{enumerate}
\item the multiplicity function $m_3$ is identically one, 
\item the graph $\Delta_3$ has no loops, and
\item the graph $\Delta_3$ has no multiple edges.
\end{enumerate}
\end{theorem}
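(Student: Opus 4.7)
The plan is to chain together the three covering constructions from Propositions~\ref{prop:multiplicity}, \ref{prop:loop}, and \ref{prop:multedges}, in that order. First I apply Proposition~\ref{prop:multiplicity} to $(\Delta_0,\mo_0,m_0,q_0)$ to obtain a finite abelian group $G_1$ and a Brauer weighting $W_1\colon\Z_{\Delta_0}\to G_1$ whose associated quantized Brauer covering graph $(\Delta_1,\mo_1,m_1,q_1)$ satisfies $m_1\equiv 1$. Then I apply Proposition~\ref{prop:loop} to $(\Delta_1,\mo_1,m_1,q_1)$, yielding $(\Delta_2,\mo_2,m_2,q_2)$ with $\Delta_2$ loop-free, and finally Proposition~\ref{prop:multedges} to $(\Delta_2,\mo_2,m_2,q_2)$, yielding $(\Delta_3,\mo_3,m_3,q_3)$ with no multiple edges. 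By definition this sequence is a tower of quantized Brauer covering graphs.

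The one subtlety, which is really the only thing to check, is that the properties achieved at earlier stages survive the subsequent constructions. Concretely I must verify: (a) that applying Proposition~\ref{prop:loop} to a graph with multiplicity function identically one yields a covering whose multiplicity function is still identically one; and (b) that applying Proposition~\ref{prop:multedges} to a graph with multiplicity function identically one and no loops yields a covering whose multiplicity function is still identically one and which is still loop-free.

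Both verifications follow directly by inspecting the Brauer weightings $W$ built in the proofs of Propositions~\ref{prop:loop} and \ref{prop:multedges}. In each case, the weighting is designed so that $\omega_\mu=\id_G$ for every vertex $\mu$ of the graph being covered, hence $\ord(\mu)=1$. By Definition~\ref{dfn:multGammaw}, $m_W(\mu_d)=m(\mu)/\ord(\mu)=m(\mu)$, so the multiplicity is preserved pointwise; this gives (a) and the multiplicity part of (b). For the loop-preservation half of (b), suppose $\Delta_2$ has no loops. Any edge $i\in(\Delta_2)_1$ then joins distinct vertices $\mu\ne\nu$, so by Definition~\ref{dfn:graphGammaw} the lifted edge $i_g\in((\Delta_2)_W)_1$ has endpoints $\mu_d$ and $\nu_e$ in disjoint fibres of the underlying graph projection, whence $\mu_d\ne\nu_e$ and $i_g$ is not a loop.

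I do not anticipate a real obstacle here; the three propositions do all the substantive work, and what remains is only the bookkeeping verification above. The proof is essentially a three-line composition together with the observation that each constructed weighting in the second and third step satisfies $\omega_\mu=\id_G$, so no new exceptional vertices are introduced and no loops are reintroduced.
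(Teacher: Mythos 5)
Your proposal is correct and follows essentially the same route as the paper: apply Propositions~\ref{prop:multiplicity}, \ref{prop:loop} and \ref{prop:multedges} in that order, then check that multiplicity one persists because the weightings in the second and third steps have $\omega_\mu=\id_G$ (so $\ord(\mu)=1$ and $m_W(\mu_d)=m(\mu)$), and that loop-freeness persists because a covering edge of a non-loop joins vertices lying over distinct vertices. This matches the paper's proof in both structure and the bookkeeping details.
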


\begin{proof}
By Proposition \ref{prop:multiplicity}, there is a finite abelian
group $G_1$ and a Brauer weighting $W_1\colon\Z_{\Delta_{0}}\to
G_1$ such that the associated quantized Brauer covering graph
$((\Delta_0)_{W_1},(\mo_0)_{W_1},(m_0)_{W_1},(q_0)_{W_1})$ has $(m_0)_{W_1}$ identically
$1$.  Set
$(\Delta_1,\mo_1,m_1,q_1)=((\Delta_0)_{W_1},(\mo_0)_{W_1},(m_0)_{W_1},(q_0)_{W_1})$.

By Proposition \ref{prop:loop}, there is a finite abelian group
$G_2$ and a Brauer weighting $W_2\colon\Z_{\Delta_{1}}\to G_2$ such
that the associated quantized Brauer covering graph
$((\Delta_1)_{W_2},(\mo_1)_{W_2},(m_1)_{W_2},(q_1)_{W_2})$ has the property that
$(\Delta_1)_{W_2}$ contains no loops. Set
$(\Delta_2,\mo_2,m_2,q_2)=((\Delta_1)_{W_2},(\mo_1)_{W_2},(m_1)_{W_2},(q_1)_{W_2})$. 
Note
that $m_2$ is also identically $1$, for if $\mu_d$ is a vertex in $\Delta_2$ with $\mu\in \Delta_1$, then
we have $\ord(\mu)=1$ so that $m_2(\mu_d) = m_1(\mu)=1$.

Finally, by Proposition \ref{prop:multedges}, there is a finite
abelian group $G_3$ and a Brauer weighting
$W_3\colon\Z_{\Delta_{2}}\to G_3$ such that the associated quantized
Brauer covering graph $((\Delta_2)_{W_3},(\mo_2)_{W_3},(m_2)_{W_3},(q_2)_{W_3})$ has
the property that $(\Delta_2)_{W_3}$ contains no multiple edges. Set
$(\Delta_3,\mo_3,m_3,q_3)=((\Delta_2)_{W_3},(\mo_2)_{W_3},(m_2)_{W_3},(q_2)_{W_3})$. 
We then note
 that $\Delta_3$ has no loops since $\Delta_2$ has no loops, and that $m_3$ is identically $1$ for
reasons similar to $m_2$ being
identically $1$. This completes the proof.
\end{proof}

\section{Appendix}\label{section:app}

In this appendix we review covering theory for path algebras
and their quotients.  For further information and proofs
see \cite{Green83, GHS08, BongartzGabriel81, GordonGreen82}.
We allow $G$ to be any finite group whereas we assumed that
$G$ is always a finite abelian group in the
previous sections.

Let $K$ be a field and $\Q$ a finite quiver.  Let $G$ be a finite, not
necessarily abelian, group.  We begin by showing that if
$G$ acts freely on $\Q$, then there is an {\em orbit quiver}
$\oQ$ associated to this action.  Let
$G$ act freely on $\Q$. If  $x$ is either a vertex or an arrow in $\Q$,
then we denote the action of $g\in G$ on $x$ by $x^g$, and denote
the orbit of $x$ under the $G$-action by $\bar x$.
We now construct $\oQ$. The vertices of $\oQ$ are the orbits of
vertices of $\Q$ and the arrows of $\oQ$ are the orbits of arrows
of $\Q$, that is, if $a\colon v\to w$ is an arrow in $\Q_1$, then
$\bar a\colon\bar v\to \bar w$ is an arrow in $\oQ_1$.

Consider the path algebras $K\Q$ and $K\oQ$.  The action of $G$
on $\Q$ extends to an action of $G$ on the paths of $Q$ and
hence to the path algebra $K\Q$.
Let $I$ be an ideal in $K\Q$ and set $\Lambda=K\Q/I$.
Assume that $I$ satisfies $r\in I$ if and only if
$r^g\in I$, for all $g\in G$.  Let $\overline I$ denote
the set of orbits of elements of $I$ under the action
of $G$.  It is immediate that $\overline I$ is an ideal
in $K\oQ$.  Let $\oL=K\oQ/\overline I$.  We call $\oL$
the {\em orbit algebra} associated to the action of $G$ on $\L$.

We now show that $\oL$ is $G$-graded.  We start by
constructing a set function $\overline W\colon\oQ_1\to G$
which we call the {\em
weight function on $\oQ$ induced by the action of $G$}. For
each vertex $\bar v$ in $\oQ_0$, choose a vertex $v_*$ in $\Q_0$ such that $v_*\in\bar v$.
Let $\bar a\colon\bar v\to \bar w$ be an
arrow in $\oQ_1$.  Then, by the freeness of the action of
$G$ on $\Q$, there is a unique $g\in G$ and $b\in\bar a$
such that $b\colon v_*^{\id_G}\to w_*^g$ is an
arrow in $\Q_1$.  We define  $\overline W(\bar a)=g$
and remark that $\overline W$ is dependent on the choices of the $v_*$.
We see that $\overline W$ extends linearly to $K\oQ$ by
setting $\overline W(\bar v)=\id_G$,
for $\bar v\in\oQ_0$, and, if $p=\bar a_1\bar a_2\cdots \bar a_n$
is a path in $\oQ$ with
$\bar a_i\in\oQ_1$, then
$\overline W(p)= \overline W(\bar a_n)\cdots \overline W(\bar a_2)
\overline W(\bar a_1)$. Note that in Section~\ref{section:q},
the product for $\overline W(p)$ is written in reverse order.
However, the definitions coincide since $G$ is assumed to be
abelian in that section. This choice of $\overline W$ induces a
$G$-grading on $K\oQ$ by setting $(K\oQ)_g$ to be the $K$-span of
paths $p$ such that $\overline W(p)=g$.
This $G$-grading on $K\oQ$ induces a
$G$-grading on $\oL=K\oQ/\overline I$ if and only if $\overline I$ can
be generated by weight homogeneous elements, that is, by elements
each of which is in $(K\oQ)_g$, for some $g\in G$.  It remains to show
that $\overline I$ can
be generated by weight homogeneous elements.

For this, we recall that $r\in K\Q$ is said to be {\em uniform} if there are
vertices $v$ and $w$ in $\Q_0$ such that $r=vrw$.
Since every non-zero element of $K\Q$ is uniquely a sum of uniform elements,
the ideal $I$ can be generated by uniform elements.  Let $r\in I$ and
$v,w\in\Q_0$ so that $r=vrw$.  Then, since $r^g\in I$, for all $g\in G$,
there exists $h\in G$ such that $r^h=(v_*)^{\id_G}r^h(w_*)^k$,
for some $k\in G$.  Hence, if $p$ is a path occurring in $r$, then $W(\bar p)=k$.
It follows that $\bar r\in (K\oQ)_k$ is a homogeneous element,
and hence, $\overline I$ can be generated by weight homogeneous
elements.

We summarize the above discussion in the following result.

\begin{proposition}\label{prop:free-to-grade}
Let $K$ be a field and let $G$ be a finite group which acts
freely on a finite quiver $\Q$.  Assume that $I$ is an
ideal in $K\Q$ such that $r\in I$ if and only if  $r^g\in I$,
for all $g\in G$. Let $\oL=K\oQ/\overline I$ be the orbit
algebra associated to the action of $G$ on $\L=K\Q/I$.
Then there is a weight function $\overline W\colon\overline Q_1
\to G$ induced by the action of $G$ so that $\overline I$
can be generated by weight
homogeneous elements. This weight function induces a $G$-grading on $\oL$.
\end{proposition}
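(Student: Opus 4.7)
The plan is to follow the outline sketched in the discussion immediately preceding the statement, filling in the verifications. First I would fix, once and for all, a set of orbit representatives $v_* \in \bar v$ for each $\bar v \in \oQ_0$. Using the freeness of the $G$-action on $\Q$, for every arrow $\bar a \colon \bar v \to \bar w$ in $\oQ_1$ there is a unique representative $b \in \bar a$ whose source is $v_*$, and a unique element $g \in G$ with target of $b$ equal to $w_*^{g}$; set $\overline W(\bar a) = g$. Extend to a path $p = \bar a_1 \cdots \bar a_n$ by $\overline W(p) = \overline W(\bar a_n)\cdots \overline W(\bar a_1)$ (the reversed order is required so that composable paths accumulate weights correctly when $G$ is non-abelian), and extend linearly to $K\oQ$.

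Next I would check that this prescription really is a grading of the path algebra, i.e.\ that $(K\oQ)_g (K\oQ)_h \subseteq (K\oQ)_{hg}$ (where $(K\oQ)_g$ denotes the $K$-span of paths of weight $g$). This amounts to checking weights on a composition of two paths; the verification is straightforward from the definition. The main work, and the step I expect to be the only non-routine one, is showing that $\overline I$ can be generated by weight-homogeneous elements. Here I would argue as follows: since $I$ is $G$-stable and every element of $K\Q$ decomposes uniquely as a sum of uniform elements, one can generate $I$ by uniform elements. Take a uniform $r = vrw \in I$ with $v,w \in \Q_0$. By $G$-equivariance, there exists $h \in G$ with $r^h = v_*^{\id_G}\, r^h\, w_*^{k}$ for some $k \in G$ (choose $h$ so that $v^h = v_*$, and define $k$ by $w^h = w_*^k$). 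Then every path appearing in $r^h$ starts at $v_*^{\id_G}$ and ends at $w_*^k$; by the defining property of $\overline W$, each such path $p$ in $\Q$ projects to a path $\bar p$ in $\oQ$ with $\overline W(\bar p)=k$. Hence $\overline{r^h} = \bar r \in (K\oQ)_k$ is weight-homogeneous. As $r$ ranges over a uniform generating set of $I$, the classes $\bar r$ generate $\overline I$, which is therefore weight-homogeneous.

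Finally, I would observe that since $\overline I$ is generated by weight-homogeneous elements it is a graded ideal, and hence the $G$-grading on $K\oQ$ descends to a $G$-grading on the quotient $\oL = K\oQ/\overline I$. This yields the induced weight function on $\oQ_1$ and the $G$-grading on $\oL$, completing the proof. The only subtlety worth emphasizing in the write-up is the choice to multiply weights in reverse order along paths, which is forced by the non-abelian setting and is consistent with the abelian convention used in Section~\ref{section:q}.
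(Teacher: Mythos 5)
Your proposal is correct and follows essentially the same route as the paper's own argument: the same choice of orbit representatives $v_*$ defining $\overline W$, the same reverse-order weight on paths, and the same key step of reducing to uniform generators $r=vrw$ of $I$ and translating by $h\in G$ so that $r^h=(v_*)^{\id_G}r^h(w_*)^k$, whence $\bar r=\overline{r^h}$ is homogeneous of degree $k$. The only difference is that you make explicit two points the paper leaves implicit (the verification that $\overline W$ yields a multiplicative grading on $K\oQ$, and that the images of uniform generators of $I$ generate $\overline I$), which is fine.
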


We now start with a weight function and construct an associated covering
algebra.  Let $W\colon\Q_1\to G$ be a weight function on
$\Q$.  We begin by defining the quiver
$\Q_W$.  Set $(\Q_W)_0=\Q_0\times G$ and $(\Q_W)_1=\Q_1\times G$. We
denote the vertices of $\Q_W$ by $v_g$ if $v\in\Q_0$ and $g\in
G$, and the arrows of $\Q_W$ by $a_g$ if $a\in\Q_1$ and $g\in G$.
The arrows in $\Q_W$ are defined as follows: if $a\colon v\to w$ is
an arrow in $\Q_1$ and $g\in G$, then $a_g\colon v_g\to w_{W(a)g}$.
There is a surjection $\pi \colon K\Q_W \to
K\Q$ induced by $\pi(v_g) = v$ and $\pi(a_g) = a$, for all
$g \in G, v \in \Q_0$ and $a \in \Q_1$.  If $I$
is generated by a set $\rho$ of weight homogeneous elements,
then let $I_W$ be the ideal in $K\Q_W$ generated by
$\pi^{-1}(\rho)$.  We call $\L_W=K\Q_W/I_W$ the {\em covering
algebra associated to $W$}.

The group $G$ acts freely on $\Q_W$ in a canonical way; namely,
if $x$ is either a vertex or an arrow in $\Q$ and $g,h\in G$, we set
$(x_g)^h=x_{gh}$. This action can be extended to an action of $G$ on
$K\Q_W$.  From the definition of $I_W$, it is clear that $r\in I_W$ if
and only $r^g\in I_W$, for all $g\in G$.  It follows that the action of
$G$ on $K\Q_W$ induces an action of $G$ on $\L_W=K\Q_W/I_W$.

The above discussion yields the following result.

\begin{proposition}\label{prop:grade-to-free}
Let $K$ be a field, $G$ a finite group, $\Q$ a finite quiver, and
 $W\colon\Q_1\to G$ a weight function on $\Q$.
 Suppose that $I$ is an ideal in $K\Q$ which is generated by weight
 homogeneous elements.  Then $G$ acts freely on $\Q_W$ and there
 is an induced $G$-action on $\L_W=K\Q_W/I_W$, the covering algebra
 of $\L=K\Q/I$ associated to $W$.
\end{proposition}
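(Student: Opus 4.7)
The plan is to prove the two assertions of the proposition in sequence: first, that the prescribed formula defines a free $G$-action on the quiver $\Q_W$; and second, that this action descends to a well-defined $G$-action on the covering algebra $\L_W = K\Q_W/I_W$.

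For the first assertion I would verify directly that setting $(x_g)^h := x_{gh}$, for $x \in \Q_0 \cup \Q_1$ and $g,h \in G$, satisfies the group-action axioms: $(x_g)^{\id_G} = x_g$ and $((x_g)^h)^k = x_{g(hk)} = (x_g)^{hk}$ are immediate from the group laws in $G$. To check compatibility with the quiver structure (so that we genuinely have an action on $\Q_W$ rather than just on the disjoint union of vertex and arrow sets), recall that by construction an arrow $a_g$ of $\Q_W$ goes from $v_g$ to $w_{W(a)g}$. Then $(a_g)^h = a_{gh}$ is the arrow from $v_{gh}$ to $w_{W(a)gh}$; since $(v_g)^h = v_{gh}$ and $(w_{W(a)g})^h = w_{W(a)gh}$, source and target are preserved. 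Freeness follows at once: if $x_g^h = x_g$ then $x_{gh} = x_g$, forcing $gh = g$, and hence $h = \id_G$.

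For the second assertion I would extend the action linearly and multiplicatively to the path algebra $K\Q_W$. On a path $p = a_1 a_2 \cdots a_n$ of $\Q_W$ set $p^h = a_1^h a_2^h \cdots a_n^h$; the arrow-level source/target compatibility verified above guarantees that this is again a well-formed path, so $G$ acts on $K\Q_W$ by $K$-algebra automorphisms. The covering projection $\pi\colon K\Q_W \to K\Q$ is by construction constant on each $G$-fiber, so $\pi(r^h) = \pi(r)$ for every $r \in K\Q_W$ and $h \in G$. In particular, if $\rho$ is the chosen set of weight homogeneous generators of $I$, then $\pi^{-1}(\rho)$ is tautologically $G$-stable.

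The main and final step is to deduce that $I_W$ itself is $G$-stable. By definition $I_W$ is the two-sided ideal of $K\Q_W$ generated by the $G$-stable set $\pi^{-1}(\rho)$, and because $G$ acts by $K$-algebra automorphisms, the two-sided ideal generated by any $G$-stable subset is automatically $G$-stable. Consequently the action descends to $\L_W = K\Q_W/I_W$, as required. The only subtle ingredient is the weight-homogeneity hypothesis on $\rho$: it is what ensures that $\pi^{-1}(\rho)$ consists of well-defined liftings (each uniform weight-homogeneous $s \in \rho$ with $s = vsw$ lifts, for each $g \in G$, to a unique element of $K\Q_W$ running from $v_g$ to $w_{W(s)g}$), which is the structural feature that makes the construction of $\L_W$ sensible in the first place. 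No further verification beyond the abstract stability argument above is needed.
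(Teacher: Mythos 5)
Your proof is correct and takes essentially the same route as the paper: the paper's own justification is the discussion immediately preceding the proposition, which introduces the canonical action $(x_g)^h=x_{gh}$, observes that it is free, and asserts that the $G$-stability of $I_W$ is ``clear from the definition,'' whence the action descends to $\L_W$. You simply make explicit the details the paper leaves implicit --- compatibility of the action with sources and targets, freeness, $\pi$-equivariance of the action (so that the generating set of $I_W$ is $G$-stable), and the fact that an ideal generated by a $G$-stable set is $G$-stable when $G$ acts by algebra automorphisms.
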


The next result shows that the above constructions may be considered
as inverse to one another.

\begin{theorem}\label{thm:appendix} Let $K$ be a field, $G$ a finite
group, $\Q$ a finite quiver, and $I$ an ideal in $K\Q$.
\begin{enumerate}
\item Suppose $G$ acts freely on $\Q$ and $r\in I$ if and only if
$r^g\in I$, for all $g\in G$.  Then, by Propositions~\ref{prop:free-to-grade}
and~\ref{prop:grade-to-free}, $G$ acts freely on $\oQ_W$.  There
is an isomorphism of quivers from $\Q$ to $\oQ_W$ which induces
a $K$-algebra isomorphism from $\Lambda=K\Q/I$ to $\oL_W=K\oQ_W/\overline I_W$.
\item Suppose $\L=K\Q/I$ and $W\colon\Q_1\to G$ is a
weight function on $\Q$ such that $I$ can be generated by
weight homogeneous elements. Then, by Propositions~\ref{prop:free-to-grade}
and~\ref{prop:grade-to-free},
there is an isomorphism of quivers from $\Q$ to $\overline{\Q_W}$
which induces a $K$-algebra isomorphism from $\Lambda=K\Q/I$ to
$\overline{\L_W}=K\overline{\Q_W}/\overline{I_W}$.
\end{enumerate}
\end{theorem}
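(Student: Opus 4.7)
The plan is to exhibit an explicit quiver isomorphism in each part and to check that the induced path-algebra isomorphism carries one ideal onto the other.

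For part (1), the freeness of the $G$-action on $\Q$ lets me choose, for each orbit $\bar v \in \oQ_0$, a distinguished representative $v_* \in \bar v$, after which every vertex of $\Q$ has a unique expression $v = v_*^{g_v}$. Using the weight function $\overline W$ coming from these choices via Proposition~\ref{prop:free-to-grade}, I would define $\Phi \colon \Q \to \oQ_W$ on vertices by $v \mapsto \bar v_{g_v}$ and on arrows by $a \mapsto \bar a_{g_{s(a)}}$, where $s(a)$ is the source of $a$. By definition of $\overline W(\bar a)$, the distinguished lift of $\bar a$ goes from $v_*$ to $w_*^{\overline W(\bar a)}$, and $G$-equivariance shows that $a$ itself goes from $v_*^{g_v}$ to $w_*^{\overline W(\bar a) g_v}$; hence $\Phi(a)$ is indeed an arrow from $\bar v_{g_v}$ to $\bar w_{\overline W(\bar a) g_v}$ in $\oQ_W$. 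Bijectivity on $\Q_0$ and $\Q_1$ is immediate from freeness. Linear-multiplicative extension gives a $K$-algebra isomorphism $\tilde\Phi \colon K\Q \to K\oQ_W$ which is $G$-equivariant with respect to the given action on $K\Q$ and the canonical action $x_g \cdot h = x_{gh}$ on $K\oQ_W$, and which satisfies $\pi \circ \tilde\Phi = q$, where $\pi\colon K\oQ_W \to K\oQ$ is the covering projection and $q\colon K\Q \to K\oQ$ is the orbit map.

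To conclude that $\tilde\Phi(I) = \overline I_W$, note that since $I$ is $G$-stable, $\tilde\Phi(I)$ is a $G$-stable ideal of $K\oQ_W$ whose $\pi$-image equals $q(I) = \overline I$. Using that the canonical $G$-action on $\oQ_W$ permutes the $\pi$-fibres simply transitively, any $G$-stable ideal of $K\oQ_W$ is uniquely determined by its $\pi$-image; and by construction of $\overline I_W$ as the ideal generated by $\pi^{-1}$ of a generating set of $\overline I$, this ideal is itself $G$-stable with $\pi$-image $\overline I$. Hence the two $G$-stable ideals agree, giving the asserted algebra isomorphism $\Lambda \cong \oL_W$. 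For part (2), the canonical free $G$-action on $\Q_W$ has orbits $\{v_h : h \in G\}$ and $\{a_h : h \in G\}$, parametrised by the vertices and arrows of $\Q$ respectively. This yields a natural quiver isomorphism $\Psi \colon \overline{\Q_W} \to \Q$ sending the orbit of $v_g$ to $v$ and the orbit of $a_g$ to $a$. Identifying $\overline{\Q_W}$ with $\Q$ via $\Psi$, the orbit map $K\Q_W \to K\overline{\Q_W}$ becomes the covering projection $\pi \colon K\Q_W \to K\Q$, so $\overline{I_W}$ corresponds to $\pi(I_W)$. Since $\pi$ is a surjective $K$-algebra homomorphism and $I_W$ was defined as the ideal generated by $\pi^{-1}(\rho)$ for a generating set $\rho$ of $I$, one has $\pi(I_W) = I$, and the induced $K$-algebra isomorphism $K\overline{\Q_W}/\overline{I_W} \to K\Q/I$ gives $\overline{\L_W} \cong \Lambda$.

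The main obstacle lies in part (1), in the identification $\tilde\Phi(I) = \overline I_W$; this reduces to the observation that a $G$-stable ideal of $K\oQ_W$ is pinned down by its $\pi$-image because the canonical $G$-action permutes the $\pi$-fibres freely and transitively. The remaining steps are a careful unwinding of the definitions of $\oQ_W$, $\overline W$, and the canonical $G$-action, together with the easy verification in part (2) that the orbit map and the covering projection coincide under the natural identification.
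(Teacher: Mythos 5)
The paper itself offers no proof of this theorem: the Appendix is a survey and defers to \cite{Green83, GHS08, BongartzGabriel81, GordonGreen82}, so your argument must be judged on its own merits. Your part (2) is correct: under the natural identification of $\overline{\Q_W}$ with $\Q$, the orbit map becomes the covering projection $\pi$, and $\pi(I_W)=I$ since $\pi$ is a surjective algebra map carrying a generating set of $I_W$ onto one of $I$. In part (1), the quiver isomorphism $\Phi$, its $G$-equivariance, and the identity $\pi\circ\tilde\Phi=q$ are all set up correctly. The gap is exactly where you locate the ``main obstacle'': the lemma that a $G$-stable ideal of $K\oQ_W$ is uniquely determined by its $\pi$-image is false.

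Here is a counterexample arising inside the setting of part (1). Let $\Q$ be the $2$-cycle with vertices $v,v'$ and arrows $a\colon v\to v'$, $a'\colon v'\to v$, with $G=\Zbb_2=\{\id,g\}$ acting by swapping; then $\oQ$ is a single vertex $u$ with one loop $b$, and (choosing $v_*=v$) the induced weight is $\overline W(b)=g$. The covering $\oQ_W$ is again a $2$-cycle, with arrows $b_{\id}\colon u_{\id}\to u_g$ and $b_g\colon u_g\to u_{\id}$. The element $x=b_{\id}b_g-b_gb_{\id}$ is nonzero and satisfies $x^g=-x$, so $J=\langle x\rangle$ is a nonzero $G$-stable ideal; yet $\pi(x)=b^2-b^2=0$, hence $\pi(J)=0$, the same image as the zero ideal. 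Thus two distinct $G$-stable ideals can share a $\pi$-image. The reasoning via simple transitivity on fibres fails because an ideal is not spanned by paths: it contains differences of distinct lifts of the same element, and these lie in $\ker\pi$, invisible to the image. (A refined uniqueness statement, restricted to ideals generated by single-fibre lifts, is true, but proving it is essentially the missing work.) The standard repair is to verify $\tilde\Phi(I)=\overline I_W$ on generators: take a $G$-stable set $S$ of uniform generators of $I$; for $r\in S$ with source $v=v_*^{g_v}$, the identity $\pi\tilde\Phi=q$, together with the fact that every path occurring in $\tilde\Phi(r)$ starts at $\bar v_{g_v}$, shows that $\tilde\Phi(r)$ is precisely the lift of $q(r)$ at $g_v$; then $G$-equivariance of $\tilde\Phi$ and $G$-stability of $I$ show that $\tilde\Phi(I)$ contains every lift of every element of $q(S)$, and these lifts generate $\overline I_W$, while conversely each $\tilde\Phi(r^h)$, $r\in S$, $h\in G$, is such a lift. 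This gives both inclusions without any appeal to the false uniqueness claim.
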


We also have the following theorem, which we apply in
Theorem~\ref{thm:graded-modules}.  A proof may be found in \cite[Theorem 2.5]{GHS08}.

\begin{theorem}\label{thm:app-graded}
Let $K$ be a field, $G$ a finite
group, $\Q$ a finite quiver, and $I$ an ideal in $K\Q$.
Suppose $\L=K\Q/I$ and $W\colon\Q_1\to G$ is a weight
function on $\Q$ such that $I$ can be generated by weight homogeneous elements.
Give $\L$ the $G$-grading induced from $W$.  Then the category of $G$-graded
$\L$-modules is equivalent to the category of $\L_W=K\Q_W/I_W$-modules.
\end{theorem}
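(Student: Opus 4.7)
The plan is to construct a pair of functors
$$F\colon \operatorname{Gr}_G(\L) \to \Mod(\L_W), \qquad F'\colon \Mod(\L_W) \to \operatorname{Gr}_G(\L),$$
where $\operatorname{Gr}_G(\L)$ denotes the category of $G$-graded $\L$-modules with degree-preserving homomorphisms, and then show that they are mutually quasi-inverse equivalences. The construction is the standard one used in the smash product / covering correspondence, and I would follow the outline of \cite{GHS08}.

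First I would define $F$. Given a $G$-graded $\L$-module $M=\bigoplus_{g\in G}M_g$, set the underlying $K$-vector space of $F(M)$ equal to $M$. For each vertex $v_g$ of $\Q_W$, let the idempotent $e_{v_g}\in K\Q_W$ act as the projection onto $vM_g \subseteq M$, so that $M=\bigoplus_{v,g}vM_g$ becomes the idempotent decomposition for $F(M)$. For each arrow $a_g\colon v_g\to w_{W(a)g}$ in $\Q_W$, let $a_g$ act as the restriction $vM_g \to wM_{W(a)g}$ of the action of $a$ on $M$; this restriction lands in the correct subspace precisely because $a$ has weight $W(a)$ and $M$ is $G$-graded. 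The main point is to check that the relations generating $I_W$ are satisfied. By construction $I_W$ is generated by $\pi^{-1}(\rho)$ for a weight homogeneous generating set $\rho$ of $I$; a lift of any $r\in \rho$ of weight $k$ is a sum of paths all of the same weight $k$, and the action of each such lift on $vM_g$ factors through the action of $r$ on $M$, which is zero.

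Next I would define $F'$. Given an $\L_W$-module $N$, use the idempotent decomposition $N=\bigoplus_{v,g}v_gN$. Set the underlying space of $F'(N)$ to be $N$, declare the homogeneous component of degree $g$ to be $F'(N)_g=\bigoplus_{v\in\Q_0}v_gN$, and let each arrow $a\in\Q_1$ act via $\sum_{g\in G}a_g$, so that $a$ sends $F'(N)_g$ into $F'(N)_{W(a)g}$. To verify that $F'(N)$ is a well-defined $\L$-module, one must show that every $r\in I$ acts trivially. Writing $r$ as a sum of weight homogeneous generators, each such generator lifts to elements of $I_W$, and the corresponding action on each homogeneous component $F'(N)_g$ is given by the action of an element of $I_W$ on $N$, hence is zero. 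By construction $F'(N)$ is then $G$-graded.

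Finally I would check that $F'\circ F$ and $F\circ F'$ are naturally isomorphic to the identity; both equalities hold on the level of underlying vector spaces and idempotent decompositions and then trivially on arrow actions. On morphism spaces, a degree-preserving $\L$-homomorphism $f\colon M\to M'$ decomposes uniquely into components $f_{v,g}\colon vM_g\to vM'_g$, and this is exactly the data of an $\L_W$-homomorphism $F(M)\to F(M')$; the inverse assignment is equally direct. The main obstacle, and the only step requiring real care, is the verification that relations are preserved in both directions: namely that weight homogeneous generators of $I$ lift to generators of $I_W$ that act compatibly on the idempotent components, and conversely. This is precisely the point at which the hypothesis that $I$ is generated by weight homogeneous elements (and the defining relation $I_W=\langle \pi^{-1}(\rho)\rangle$) is essential; once this verification is in place, functoriality and the quasi-inverse property follow by a straightforward diagram chase.
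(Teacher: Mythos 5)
Your construction is correct in outline, but note that the paper itself does not prove this theorem at all: it is stated in the Appendix with the remark that a proof may be found in \cite[Theorem 2.5]{GHS08}, so there is no internal argument to compare against, and what you have written is essentially a reconstruction of the proof living in that cited reference --- the explicit pair of quasi-inverse functors between $G$-graded $\Lambda$-modules and $\Lambda_W$-modules (equivalently, Cohen--Montgomery duality for the smash product $\Lambda \# (KG)^{*}$, to which $\Lambda_W$ is isomorphic for $G$ finite). Two points in your sketch deserve more care. First, for the lifting step to make sense the generators in $\rho$ must be taken not only weight homogeneous but \emph{uniform}: only then does a generator $r=\sum_i c_i p_i$ from $v$ to $w$ of weight $k$ admit, for each $g\in G$, a well-defined lift $r_g=\sum_i c_i (p_i)_g$ supported at the vertex $v_g$, and the correct reading of ``the ideal generated by $\pi^{-1}(\rho)$'' is the ideal generated by these uniform lifts; the literal set-theoretic preimage would contain elements of $\ker\pi$ (for instance $r_g + (p_1)_g - (p_1)_h$ lies in $\pi^{-1}(r)$) and would collapse the covering, and the paper's Appendix discussion of uniform elements exists precisely to handle this. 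Second, in checking that $F'(N)$ is a $\Lambda$-module, an arbitrary element of $I$ is not a sum of generators but a combination $\sum_j x_j r_j y_j$ with $x_j,y_j\in K\mathcal{Q}$; the fix is standard --- since $F'(N)$ is already a $K\mathcal{Q}$-module it suffices that each $r_j\in\rho$ act as zero --- but your sentence as written is imprecise. With these repairs your argument is complete and supplies exactly what the paper outsources to the literature.
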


Finally, let $\L=K\Q/I$ be a $G$-graded algebra and let $P\colon K\Q\to
\L$ be the canonical surjection.  Suppose that the $G$-grading of $\L$ is
induced from a weight function. It follows that, if $x$ is either a vertex or an arrow
of $\Q$, then $P(x)$ is in some $(K\Q)_g$, for some $g\in G$; that is,
$P(x)$ is a homogeneous element of degree $g$.  Our final proposition shows that the converse
also holds.  The proof is left to the reader.

\begin{proposition}
Let $\L=K\Q/I$ be a $G$-graded algebra and let $P\colon K\Q\to
\L$ be the canonical surjection.  Suppose, for all $x\in\Q_0\cup\Q_1$,
there exists $g\in G$ such
that $P(x)$ is a homogeneous element of degree $g$.
Then the $G$-grading of $\L$ is induced by a weight function on $\Q$.
\end{proposition}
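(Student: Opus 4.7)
The plan is to explicitly construct the weight function $W\colon \Q_1\to G$ from the given homogeneity of $P(x)$, and then verify both that $I$ is generated by $W$-weight homogeneous elements and that the resulting grading on $\L$ matches the given one.

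First I would analyze the vertices. For each $v\in\Q_0$, the element $P(v)$ is an idempotent in $\L$, and by hypothesis $P(v)\in\L_{g_v}$ for some $g_v\in G$. Then $P(v)=P(v)^2\in\L_{g_v^2}$, so if $P(v)\ne 0$ we must have $g_v^2=g_v$, forcing $g_v=\id_G$. (If $P(v)=0$ we may assign $g_v=\id_G$ by convention, since $0$ lies in every graded component.) Thus every vertex image is homogeneous of degree $\id_G$.

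Next I would define $W$ on arrows. For each $a\colon v\to w$ in $\Q_1$ we have $P(a)=P(v)P(a)P(w)$, and by hypothesis $P(a)\in\L_{h_a}$ for some $h_a\in G$. Set $W(a):=h_a$ (again, if $P(a)=0$, choose $W(a)$ arbitrarily). Extend $W$ multiplicatively to paths in the paper's convention, so that each path $p=a_1a_2\cdots a_n$ of $K\Q$ is weight homogeneous of degree $W(p)=W(a_n)\cdots W(a_1)$, and $W$ induces a $G$-grading on $K\Q$ with $(K\Q)_g=\operatorname{span}_K\{p\mid W(p)=g\}$. By construction, $P$ sends each $W$-homogeneous path $p$ to an element of $\L_{W(p)}$; this is the key compatibility.

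Now I would show $I$ is generated by $W$-weight homogeneous elements. Let $r\in I$. Decompose $r=\sum_{g\in G}r_g$ with $r_g\in (K\Q)_g$. Then $P(r_g)\in\L_g$ by the compatibility above, and
\[
0=P(r)=\sum_{g\in G}P(r_g).
\]
Since the $\L_g$ form a direct sum, each $P(r_g)=0$, i.e.\ $r_g\in I$. Hence $I=\bigoplus_g (I\cap (K\Q)_g)$ is a graded ideal, so $I$ can be generated by its homogeneous components. This means $W$ induces a $G$-grading $\L=\bigoplus_g\L'_g$ with $\L'_g=((K\Q)_g+I)/I$. Finally, to see this grading coincides with the originally given one, note that each image $P(p)$ of a path is homogeneous of the same degree $W(p)$ in both gradings: in the $W$-induced grading by definition, and in the original grading by the choice of $W(a)$ together with $g_v=\id_G$ for vertices. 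Since such $P(p)$ span $\L$, the two gradings agree on all of $\L$.

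The mildly delicate step is the argument that $I$ is graded; this is exactly where the hypothesis on $P(x)$ being homogeneous is used to its full strength (via the direct-sum decomposition of $\L$). Everything else is essentially a bookkeeping verification that the constructed $W$ reproduces the given grading.
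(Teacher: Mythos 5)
Your proof is correct. The paper leaves this proposition to the reader, and your argument is the natural inverse of the construction in Proposition~\ref{prop:free-to-grade}: homogeneity plus idempotency forces each vertex image into degree $\id_G$, the arrow degrees define $W$, the direct-sum decomposition $\L=\bigoplus_g\L_g$ forces $I$ to be a graded ideal for the $W$-grading on $K\Q$ (which is exactly the condition ``$I$ is generated by weight homogeneous elements'' needed for $W$ to induce a grading on $\L$), and the two gradings then coincide because $P((K\Q)_g)\subseteq\L_g$ for all $g$ while both families of subspaces sum to $\L$ with the $\L_g$ in direct sum. The one point to double-check is the order of the product defining $W(p)$: this proposition sits in the Appendix, where $G$ need not be abelian and the weight of a path $a_1a_2\cdots a_n$ is declared to be $W(a_n)\cdots W(a_1)$, whereas $P(a_1)P(a_2)\cdots P(a_n)$ lands in $\L_{W(a_1)W(a_2)\cdots W(a_n)}$ under the usual convention $\L_g\L_h\subseteq\L_{gh}$ with paths written left to right; your key compatibility $P(p)\in\L_{W(p)}$ must use whichever ordering matches the grading convention. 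For abelian $G$, the case used throughout the body of the paper, this is immaterial and your argument goes through verbatim.
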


\end{document}